\documentclass[12pt]{amsart}\sloppy 
\textwidth=420pt 
\oddsidemargin=32pt
\evensidemargin=32pt

%Fontstyle, etc, test

\pagestyle{plain}

\usepackage{amsfonts,amsmath,amsthm,amssymb,latexsym,amsthm,newlfont,enumerate,color}%graphics,amsfonts,amscd,multicol

%\usepackage{color}
%\usepackage{hyperref}
% \usepackage{correct}
%\usepackage{epsfig,url}
%\usepackage{flafter}
%\usepackage{diagrams}

%\input btxmac.tex               %for BibTeX

%\diagramstyle[noPostScript]
%
%\makeatletter
%
%\def\jobis#1{FF\fi
%  \def\predicate{#1}%
%  \edef\predicate{\expandafter\strip@prefix\meaning\predicate}%
%  \edef\job{\jobname}%
%  \ifx\job\predicate
%}
%
%\makeatother
%

%\addtolength{}{-2cm}
%\addtolength{\textheight}{-2cm}
%\addtolength{\voffset}{-2cm}
%\addtolength{topmargin}{}
%\addtolength{\textwidth}{4cm}
%\addtolength{\evensidemargin}{-2cm}
%
%\hoffset -1in
%\oddsidemargin 3cm
%\evensidemargin 3cm
%\textwidth 15cm

\newif\ifslide

\theoremstyle{plain}

\ifdefined\spacer
\newtheorem{theorem}{Theorem}
 \else
\newtheorem{theorem}{Theorem}[section]
\fi

\newtheorem{corollary}[theorem]{Corollary}
\newtheorem{lemma}[theorem]{Lemma}
\newtheorem{claim}[theorem]{Claim}

\newtheorem*{theorem*}{Theorem}

\newtheorem{proposition}[theorem]{Proposition}
\newtheorem{definition-lemma}[theorem]{Definition-Lemma}
\newtheorem{question}[theorem]{Question}
\newtheorem{red-question}[theorem]{\textcolor{red}{Question}}

\theoremstyle{definition}
\newtheorem{definition}[theorem]{Definition}
\newtheorem{remark}[theorem]{Remark}
\newtheorem{setup}[theorem]{Setup}
\newtheorem{example}[theorem]{Example}

%\pageheight{9truein}\pagewidth{6truein}
%\hcorrection {0.37truein}
%\catcode`@=11
%\def\logo@{\relax}
%\catcode`@=\active
%\DeclareMathOperator{\gcd}{gcd}

%1 Variable

\def\ol#1{\overline{#1}}

\def\ideal#1.{I_{#1}}
\def\ring#1.{\mathcal {O}_{#1}}
\def\Spec{\operatorname {Spec}}

\def\fring#1.{\hat{\mathcal {O}}_{#1}}
\def\proj#1.{\mathbb {P}(#1)}
\def\pr #1.{\mathbb {P}^{#1}}
\def\dpr #1.{\hat{\mathbb {P}}^{#1}}
\def\af #1.{\mathbb A^{#1}}
\def\Hz #1.{\mathbb F_{#1}}
\def\Hbz #1.{\overline{\mathbb F}_{#1}}
\def\fb#1.{\underset #1 {\times}}
\def\rest#1.{\underset {\ \ring #1.} \to \otimes}
\def\au#1.{\operatorname {Aut}\,(#1)}
\def\deg#1.{\operatorname {deg } (#1)}

\def\pic#1.{\operatorname {Pic}\,(#1)}
\def\pico#1.{\operatorname{Pic}^0(#1)}
\def\picg#1.{\operatorname {Pic}^G(#1)}
\def\ner#1.{NS (#1)}
\def\rdown#1.{\llcorner#1\lrcorner}
\def\rfdown#1.{\lfloor{#1}\rfloor}
\def\rup#1.{\ulcorner{#1}\urcorner}
\def\rcup#1.{\lceil{#1}\rceil}

\def\n1#1.{\operatorname {N_1}(#1)}  %Vector space of 1-cycles
\def\cn1#1.{\overline{\operatorname {N^1}(#1)}} %closure
\def\cone#1.{\operatorname {NE}(#1)}     %cone of curves
\def\ccone#1.{\overline{\operatorname {NE}}(#1)}
\def\none#1.{\operatorname {NF}(#1)}
\def\cnone#1.{\overline{\operatorname {NF}}(#1)}
\def\mone#1.{\operatorname {NM}(#1)} %cone of moving curves
\def\cmone#1.{\overline{\operatorname {NM}}(#1)}

\def\coef#1.{\frac{(#1-1)}{#1}}
\def\vit#1.{D_{\langle #1 \rangle}}
\def\mm#1.{\overline {M}_{0,#1}}
\def\H1#1.{H^1(#1,{\ring #1.})}
\def\ac#1.{\overline {\mathbb F}_{#1}}

\def\adj#1.{\frac {#1-1}{#1}}
\def\spn#1.{\overline{#1}}
\def\pek#1.#2.{\Cal P^{#1}(#2)}
\def\plk#1.#2.{\Cal P^{\leq #1}(#2)}
\def\ev#1.{\operatorname{ev_{#1}}}
\def\ilist#1.{{#1}_1,{#1}_2,\dots}
\def\bminv#1.{(\nu_1,s_1;\nu_2,s_2;\dots ;\nu_{#1},s_{#1};\nu_{r+1})}
\def\zinv#1.{(\nu_1,s_1;\nu_2,s_2;\dots ;\nu_{#1},s_{#1};0)}
\def\iinv#1.{(\nu_1,s_1;\nu_2,s_2;\dots ;\nu_{#1},s_{#1};\infty)}

\def\scr #1.{\mathcal #1}

%2 Variables

\def\llist#1.#2.{{#1}_1,{#1}_2,\dots,{#1}_{#2}}
\def\ulist#1.#2.{{#1}^1,{#1}^2,\dots,{#1}^{#2}}
\def\lomitlist#1.#2.{{#1}_1,{#1}_2,\dots,\hat {{#1}_i}, \dots, {#1}_{#2}}
\def\lomitlistz#1.#2.{{#1}_0,{#1}_1,\dots,\hat {{#1}_i}, \dots, {#1}_{#2}}
\def\loc#1.#2.{\Cal O_{#1,#2}}
\def\fderiv#1.#2.{\frac {\partial #1}{\partial #2}}
\def\deriv#1.#2.{\frac {d #1}{d #2}}

\def\map#1.#2.{#1 \longrightarrow #2}
\def\rmap#1.#2.{#1 \dasharrow #2}
\def\emb#1.#2.{#1 \hookrightarrow #2}
\def\non#1.#2.{\text {Spec }#1[\epsilon]/(\epsilon)^{#2}}
\def\Hi#1.#2.{\text {Hilb}^{#1}(#2)}
\def\sym#1.#2.{\operatorname {Sym}^{#1}(#2)}
\def\Hb#1.#2.{\text {Hilb}_{#1}(#2)}
\def\Hm#1.#2.{\Hom_{#1}(#2)}
\def\prd#1.#2.{{#1}_1\cdot {#1}_2\cdots {#1}_{#2}}
\def\Bl #1.#2.{\operatorname {Bl}_{#1}#2}
\def\pl #1.#2.{#1^{\otimes #2}}
\def\mgn#1.#2.{\overline {M}_{#1,#2}}
\def\ialist#1.#2.{{#1}_1 #2 {#1}_2, #2\dots}
\def\pair#1.#2.{\langle #1, #2\rangle}
\def\vandermonde#1.#2.{\left|
\begin{matrix}
1 & 1 & 1 & \dots & 1\\
{#1}_1 & {#1}_2 & {#1}_3 & \dots & {#1}_{#2}\\
{#1}_1^2 & {#1}_2^2 & {#1}_3^2 & \dots & {#1}_{#2}^2\\
\vdots & \vdots & \vdots & \ddots & \vdots\\
{#1}_1^{#2-1} & {#1}_2^{#2-1} & {#1}_2^{#2-1} & \dots & {#1}_{#2}^{#2-1}\\
\end{matrix}
\right|
}
\def\vandermondet#1.#2.{\left|
\begin{matrix}
1 & {#1}_1   & {#1}_1^2 & \dots & {#1}_1^{#2-1}\\
1 & {#1}_2   & {#1}_2^2 & \dots & {#1}_2^{#2-1}\\
1 & {#1}_3   & {#1}_3^2 & \dots & {#1}_3^{#2-1}\\
\vdots & \vdots & \vdots & \ddots & \vdots\\
1 & {#1}_{#2}& {#1}_{#2}^2 & \dots & {#1}_{#2}^{#2-1}\\
\end{matrix}
\right|
}
\def\gr#1.#2.{\mathbb{G}(#1,#2)}

%3 Variables

\def\alist#1.#2.#3.{{#1}_1 #2 {#1}_2 #2\dots #2 {#1}_{#3}}
\def\zlist#1.#2.#3.{#1_0 #2 #1_1 #2\dots #2 #1_{#3}}
\def\lomitlist30#1.#2.#3.{{#1}_0,{#1}_1 #2 \dots #2\hat {{#1}_i} #2\dots #2 {#1}_{#3}}
\def\lmap#1.#2.#3.{#1 \overset{#2}{\longrightarrow} #3}
\def\mes#1.#2.#3.{#1 \longrightarrow #2 \longrightarrow #3}
\def\ses#1.#2.#3.{0\longrightarrow #1 \longrightarrow #2 \longrightarrow #3 \longrightarrow 0}
\def\les#1.#2.#3.{0\longrightarrow #1 \longrightarrow #2 \longrightarrow #3}
\def\res#1.#2.#3.{#1 \longrightarrow #2 \longrightarrow #3\longrightarrow 0}
\def\Hi#1.#2.#3.{\text {Hilb}^{#1}_{#2}(#3)}
\def\ten#1.#2.#3.{#1\underset {#2}{\otimes} #3}
\def\lomitlist30#1.#2.#3.{{#1}_0 #2 {#1}_1 #2 \dots #2 \hat {{#1}_i} #2 \dots #2 {#1}_{#3}}
\def\mderiv#1.#2.#3.{\frac {d^{#3} #1}{d #2^{#3}}}

%Operatornames

\def\Hom{\operatorname{Hom}}

\def\Proj{\operatorname{Proj}}

\def\Supp{\operatorname{Supp}}

\def\dim{\operatorname{dim}}

\def\deg{\operatorname{deg}}
\def\depth{\operatorname{depth}}

% 3-dimensional terminal singularities:

\def\Sing{\operatorname{Sing}}

\def\ord{\operatorname{ord}}

\def\rest{\operatorname{res}}

\def\vol{\operatorname{vol}}
%\def\res{\operatorname{res}}

 %Gromov-Witten invariant

%Acronyms

%\def\red{_{\text {red}}}

\def\e{\Cal E}

\def\e1{E_1}
\def\e2{E_2}

%\def{\wX}{\widetilde{X}}
%\def{\wY}{\widetilde{Y}}
%\def{\wZ}{\widetilde{Z}}
%\def{\wF}{\widetilde{F}}
%\def{\wR}{\widetilde{R}}
%\def{\wE}{\widetilde{E}}
%\def{\wD}{\widetilde{\mathbb{D}}}

%Commutative diagram

\def\mapdown#1{\big\downarrow\rlap{$\vcenter{\hbox{$\scriptstyle#1$}}$}}

\def\mapse#1{
{\vcenter{\hbox{$\mathop{\smash{\raise1pt\hbox{$\diagdown$}\!\lower7pt
\hbox{$\searrow$}}\vphantom{p}}\limits_{#1}\vphantom{\mapdown{}}$}}}}

%Boxes for Exams

\def\VR#1.{height#1pt&\omit&&\omit&&\omit&&\omit&&\omit&\cr}

\def\VRT#1.{height#1pt&\omit&&\omit&\cr}

% Dynkin Diagrams

\usepackage[colorlinks=true,pagebackref,hyperindex]{hyperref}
\usepackage{amsmath}
\usepackage{amssymb}
\usepackage{amsthm}
\usepackage[all,cmtip]{xy}
%\usepackage{MnSymbol}
%\usepackage[notcite, notref]{showkeys}

%---------- mathbb font --------------------------------
%

\newcommand{\bA}{\ensuremath{\mathbb{A}}}

\newcommand{\bC}{\ensuremath{\mathbb{C}}}

\newcommand{\bN}{\ensuremath{\mathbb{N}}}

\newcommand{\bP}{\ensuremath{\mathbb{P}}}
\newcommand{\bQ}{\ensuremath{\mathbb{Q}}}
\newcommand{\bR}{\ensuremath{\mathbb{R}}}

\newcommand{\bZ}{\ensuremath{\mathbb{Z}}}
\newcommand{\bfa}{\ensuremath{\mathbf{a}}}

\newcommand{\cF}{\ensuremath{\mathcal{F}}}

\newcommand{\cI}{\ensuremath{\mathcal{I}}}

\newcommand{\cL}{\ensuremath{\mathcal{L}}}

\newcommand{\cO}{\ensuremath{\mathcal{O}}}

\DeclareMathOperator{\codim}{codim}
\DeclareMathOperator{\Image}{Im}
\DeclareMathOperator{\Inte}{Int}
\DeclareMathOperator{\Val}{Val}
\DeclareMathOperator{\NQS}{NQS}

\newcommand{\dps}{\displaystyle}
\newcommand{\bul}{\bullet}

\title{Delta invariants of weighted hypersurfaces}
\date{\today}

\author{Taro Sano}
\email{tarosano@math.kobe-u.ac.jp} 
\address{Department of Mathematics, Graduate School of Science, 
Kobe university, 
1-1, Rokkodai, Nada-ku, Kobe 657-8501, Japan}

\author{Luca Tasin}
\email{luca.tasin@unimi.it}
\address{Dipartimento di Matematica F.\ Enriques, Universit\`a degli Studi di Milano, Via Cesare Saldini 50, 20133 Milano, Italy}

\begin{document}

\subjclass[2010]{primary 14J40,14J45}

\keywords{Fano varieties, K-stability}

\maketitle
\begin{abstract}
We give a lower bound for the delta invariant of the fundamental divisor of a quasi-smooth weighted hypersurface.
As a consequence, we prove K-stability of a large class of quasi-smooth Fano hypersurfaces of index 1 and of all smooth Fano weighted hypersurfaces of index 1 and 2.
The proofs are based on the Abban--Zhuang method and on the study of linear systems on flags of weighted hypersurfaces.	
\end{abstract}

%\begin{abstract}
%We study the delta invariant of weighted hypersurfaces employing \xb{using? implementing?} the method of Abban--Zhuang. As a consequence, we prove the K-stability of a large class of quasi-smooth Fano hypersurfaces of index 1 and of all smooth Fano weighted hypersurfaces of index 1 and 2.
%\end{abstract}

%\xb{Alternatively:}

%\begin{abstract}
%Let $X \subset \mathbb P(a_0, \ldots, a_{n+1})$ be a quasi-smooth weighted hypersuface such that $a_0=a_1=1$ and $[0,\ldots, 0, 1] \notin X$.
%We show that the delta invariant of $X$ is bigger than 1 and so $X$ is K-stable.  
%This implies in particular that any smooth Fano weighted hypersurface of index 1 or 2 is K-stable.
%The proofs are based on Abban--Zhuang method and on the study of linear systems on weighted hypersurfaces.
%\end{abstract}

\section{Introduction}

As predicted by the Yau--Tian--Donaldson conjecture and confirmed by the recent breakthroughs \cite{CDS, MR3352459, LXZ22}, the existence of a K\"ahler-Eintein metric on a klt Fano variety $X$ is equivalent to the K-polystability of $X$. 
An important feature of K-stability is that it can be characterised by means of delta invariants associated to the anticanonical divisor $-K_X$ (\cite{ MR3956698, MR3896135, MR4067358}). 
More generally, if $X$ is a klt variety and $L$ is a big line bundle on $X$, one can define the $\delta$-invariant of $(X,L)$ via log canonical thresholds of basis type divisors $\bQ$-linearly equivalent to $L$, see Section \ref{s:thresholds} for details. A klt Fano variety $X$ is K-stable if and only if $\delta(-K_X) >1$. For an overview on the subject see \cite{Xu21} and \cite{Xubook}.

%The construction of K\"ahler-Einstein metrics on complex manifolds is a fundamental problem in geometry. While it is classically known that such metrics exist for canonically polarized manifolds \cite{} and for Calabi-Yau manifolds \cite{}, the question for Fano varieties have only recently seen a major breakthrough in the solution of the so-called Yau-Tian-Donaldson conjecture, stating the equivalence of the existence of KE metrics and K-polystability. 

%See \cite{Xu21} for an overview on the subject from an algebro-geometric point of view.

A challenging conjecture predicts that any smooth Fano hypersurface $ X \subset \bP^{n+1}$ of degree at least $3$ is K-stable. This is known to be true if the Fano index of $X$ is $1$ \cite{Pukhlikov98, Cheltsov01}, \cite[Corollary 1.5]{MR3936640}, and $2$ \cite{AZ22} or if $X$ is general. The latter follows from the openness of K-stability and the fact that Fermat hypersurfaces of degree at least 3 are K-stable (see \cite{MR894378}, \cite{Tian00}, \cite{MR2212883},  \cite{MR4309493} for historical and conceptual foundation). It is also known for cubic fourfolds \cite{Liu22} and if the dimension of $X$ is at least $\iota_X^3$ \cite[Theorem B]{AZ23}, where $\iota_X$ is the Fano index of $X$.

In this paper, we look at hypersurfaces in the weighted projective space, since these provide a large class of interesting (singular) Fano varieties.
Our main result is the following.

\begin{theorem}\label{thm:main_intro}[=Theorem \ref{thm:main}]
Let $X=X_d \subset \bP(a_0, \ldots, a_{n+1})$ be a well-formed quasi-smooth weighted hypersurface of degree $d$ which is not a linear cone. 
Assume that there is $r$  such that $a_r >1$ and $a_r | d$.
Then 
$$
\delta(\cO_X(1)) \ge \frac{(n+1)a_{r}}{d}.
$$

Moreover, if  $X$ is Fano of index $\iota_X: = \sum_{i=0}^{n+1} a_i -d$ and   $\frac{(n+1)a_{r}}{d} \ge \iota_X$, then $\delta(-K_X) \ge 1$ and if $n \ge 3$, then $X$ is K-stable. 
\end{theorem}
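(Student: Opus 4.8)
The plan is to establish the displayed lower bound for $\delta(\mathcal{O}_X(1))$ by the Abban--Zhuang method and then read off the K-stability assertion by rescaling. Write $L=\mathcal{O}_X(1)$. Since $\delta(L)=\inf_{p\in X}\delta_p(L)$, it suffices to prove $\delta_p(L)\ge (n+1)a_r/d$ for each $p\in X$; reordering the weights, assume $r=n+1$. Fixing $p$, I would construct through $p$ an admissible flag of irreducible subvarieties $X=Y_0\supset Y_1\supset\dots\supset Y_{n-1}\supset Y_n=\{p\}$, with $\dim Y_i=n-i$, built from weighted hypersurface sections: each $Y_i$ is a coordinate section $(x_{j}=0)\cap Y_{i-1}$ whenever possible (for instance $Y_1=(x_{n+1}=0)\cap X$ when $p$ lies on it), which by quasi-smoothness and well-formedness is again a well-formed quasi-smooth weighted hypersurface of the \emph{same} degree $d$ in a weighted projective space with one fewer variable, and a general hyperplane section when no useful coordinate remains. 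The Abban--Zhuang inequality then bounds $\delta_p(L)$ from below by the minimum over $i=1,\dots,n$ of the slopes $A_{(Y_{i-1},\Delta_{i-1})}(Y_i)\big/S\big(W^{(i-1)}_\bullet;Y_i\big)$, where $W^{(0)}_\bullet$ is the complete linear series of $L$, each $W^{(i)}_\bullet$ is the iterated refinement of $L$ along $Y_1,\dots,Y_i$, $\Delta_i$ is the different on $Y_i$, and the $i=n$ term is the slope at $p$ on the curve $Y_{n-1}$.

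The first slope is never the obstruction. If $Y_1=(x_j=0)\cap X$, then $Y_1$ is a prime divisor on the klt variety $X$, so $A_X(Y_1)=1$, and $Y_1\sim_{\bQ}a_jL$ with $L$ ample; hence $\operatorname{vol}(L-tY_1)=(1-a_jt)^nL^n$ for $0\le t\le 1/a_j$, the pseudoeffective threshold is $1/a_j$, and $S_L(Y_1)=\int_0^{1/a_j}(1-a_jt)^n\,dt=\tfrac{1}{(n+1)a_j}$, so $A_X(Y_1)/S_L(Y_1)=(n+1)a_j$. Since $X$ is not a linear cone, $a_r<d$, whence $(n+1)a_j\ge n+1>(n+1)a_r/d$. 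Thus the entire content lies in the remaining slopes, i.e. in the estimate $\delta_p\big(Y_1,W^{(1)}_\bullet\big)\ge (n+1)a_r/d$.

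This is the inductive core, and it is where the divisibility $a_r\mid d$ and the degree $d$ re-enter. The refined multigraded series $W^{(1)}_\bullet$ on $Y_1$, whose $(m,j)$-piece is the image of $H^0(X,(m-ja_{n+1})L)$ in $H^0(Y_1,(m-ja_{n+1})L|_{Y_1})$, is strictly smaller than the complete series of $L|_{Y_1}$ because the degree-$d$ equation of $X$ imposes relations among the monomials on $Y_1$, and this deficit is governed by $d$. I would (i) isolate the right inductive statement — not the displayed bound itself, but an estimate for the $\delta$-invariant of a quasi-smooth weighted hypersurface carrying a coordinate flag together with the associated iterated refined series, phrased in terms of the weights and $d$; (ii) describe the Newton--Okounkov body of $(X,L)$ with respect to the coordinate flag as an explicit rational polytope determined by the $a_i$ and $d$ (a simplex-like body dilated in the directions recording the heavy weights and the equation), and express each slope as a barycenter-type integral over it; and (iii) verify that the minimum of all these slopes — in particular the bottom slope $1/S\big(W^{(n-1)}_\bullet;p\big)$ on the curve $Y_{n-1}$ — is $\ge (n+1)a_r/d$, which after clearing denominators reduces to an elementary inequality in $d$, $n$ and the $a_i$ exploiting $a_r\mid d$ and $a_r<d$. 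The step I expect to be hardest is that this picture presupposes $p$ in general position relative to the flag, whereas the bound is needed at \emph{every} point, including the singular points of $X$ and the deeper coordinate strata $(x_{j_1}=\dots=x_{j_k}=0)\cap X$, where $|L|$ has base points and Bertini fails: there one must choose which coordinates to kill, and in which order, so that $p\in Y_{n-1}$, compute the refined series and the local behaviour of $|L|$ at $p$ directly from the quasi-smoothness equations, and check the inequality case by case, uniformly in the weights.

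For the final assertion, if $X$ is Fano of index $\iota_X$ then $-K_X\sim_{\bQ}\iota_XL$, so $\delta(-K_X)=\delta(\mathcal{O}_X(\iota_X))=\iota_X^{-1}\delta(L)\ge\tfrac{(n+1)a_r}{d\,\iota_X}\ge 1$ under the hypothesis $(n+1)a_r/d\ge\iota_X$, and hence $X$ is K-semistable. To upgrade to K-stability when $n\ge 3$: when the hypothesis is strict, $\delta(-K_X)>1$ and $X$ is uniformly K-stable, hence K-stable. When $(n+1)a_r/d=\iota_X$, one returns to the flag estimate — the first slope $(n+1)a_{n+1}$ already exceeds $\iota_X$ strictly, and the intermediate and bottom slopes are strict as well unless the Newton--Okounkov polytope degenerates into one of a short list of cone-like shapes incompatible with well-formedness and the non-linear-cone hypothesis in dimension $\ge 3$; equivalently, one invokes that a strictly K-semistable Fano of dimension $\ge 3$ with $\delta=1$ specialises to such a configuration. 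Hence $X$ is K-stable.
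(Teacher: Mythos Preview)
Your architecture (pointwise bound via an Abban--Zhuang flag, then rescale) and your first-slope computation are correct, but the flag is built backwards and the treatment of special points is a genuine gap. You reorder so that $r=n+1$ and then take $Y_1=(z_{n+1}=0)\cap X$, i.e.\ you cut by $z_r$ first. The paper does the opposite: after arranging $P_r\notin X$ by a coordinate change, it cuts by general members $H_j\in|\mathcal{O}_X(a_j)|$ through $p$ for every $j\in\{2,\dots,n+1\}\setminus\{r\}$ and \emph{keeps} the variable $z_r$, so that the bottom of the flag is a curve of degree $d$ in $\mathbb{P}(1,1,a_r)$. The hypotheses $a_r\mid d$ and $P_r\notin X$ are precisely what make this curve smooth (Lemma~\ref{l:cuttingcurve}); if you discard $z_r$ at the first step you throw away the only divisibility you have, and your claim that the coordinate slice $(z_j=0)\cap X$ stays quasi-smooth is in any case false in general. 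With the correct flag no Newton--Okounkov polytope is needed: since $L$ is ample each refinement is almost complete, so $\delta_Z(W_{\bullet,\bullet})=\tfrac{k+1}{k}\,\delta_Z(L|_H)$ at every stage, and Lemma~\ref{l:curve} yields $\delta_p(L)\ge (n+1)\big/\bigl((\prod_{j\ne 0,1,r}a_j)\,L^n\bigr)=(n+1)a_r/d$ directly. The step you flag as hardest --- points in $\operatorname{Bs}|\mathcal{O}_X(1)|$, or $a_0>1$ --- is not a case analysis in the paper: one takes a cyclic cover $\pi\colon W\to X$ of degree $a_k$ branched along a general $H'_k\in|\mathcal{O}_X(a_k)|$ missing $p$ (for some $k\ne r$ with $a_k>1$ and $p_k\ne 0$), which replaces $a_k$ by $1$, and uses $\delta_q(\mathcal{O}_W(1))\le\delta_p(\mathcal{O}_X(1))$ (Proposition~\ref{prop:deltafinitecover}) to reduce, after at most two such covers, to $a_0=a_1=1$ and $q\notin\operatorname{Bs}|\mathcal{O}_W(1)|$, where the flag above exists. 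Your proposal has no substitute for this reduction.

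For the equality case in the K-stability assertion, degenerations play no role. One traces equality through Lemma~\ref{l:curve} and Proposition~\ref{prop:deltafinitecover} to conclude that any valuation computing $\delta_p(-K_X)=1$ is $\operatorname{ord}_E$ for a prime divisor $E$ \emph{on} $X$ (not merely over it); since $n\ge 3$ forces $\rho(X)=1$, one writes $\tau E\sim -K_X$ and computes $\beta_X(E)=A_X(E)-S(-K_X;E)=1-\tau/(n+1)>0$ because $X$ not a linear cone gives $\tau<n+1$, contradicting $\delta_p(-K_X)=1$.
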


%The proof makes use of the new inductive Abban--Zhuang method invented in \cite{AZ22, AZ23}, see Subsection \ref{ss:structure} for a summary.

As an explicit application, one can use Theorem \ref{thm:main_intro} in dimension 3 and 4,  see Subsection \ref{s:low} for details.
%\begin{remark} (Fano threefolds) Recall that there are 95 families of terminal Fano threefold weighted hypersurfaces of index 1. Theorem \ref{thm:main_intro} can be applied to show in a uniform way that all quasi-smooth members of 82  families are K-stable. See Subsection \ref{s:low} for details.
%\end{remark}
Very little is known of K-stability of Fano weighted hypersurfaces in higher dimension (see \cite{Johnson-Kollar, MR4056840}). 
Assuming that $a_i |d$ for all $i$, in \cite[Corollary 1.5]{ST21}, we showed that general Fano hypersurfaces of degree $d$ and $\iota_X < \dim X$ are K-stable. This has been then applied in \cite{LST} to construct infinitely many Sasaki--Eintein metrics on odd-dimensional spheres. It usually takes some work to generalize results for general members to those for all members. 

As a consequence of Theorem \ref{thm:main_intro}, we get the following. 

\begin{corollary}\label{c:quasi-smooth}
	Let $X=X_d \subset \bP(a_0, \ldots, a_{n+1})$ be a well-formed quasi-smooth Fano weighted hypersurface of degree $d$ and index $1$ such that $a_0  \le a_1 \le \ldots  \le a_{n+1}$. Assume that $a_{n+1} | d$. Then 
	$$
	\delta(-K_X) > 1,
	$$
	so $X$ is K-stable. 
	%\xr{we only have $\ge 1$ when $a_0 >1$?} \xb{Yes, but in view of the equality statement in Theorem \ref{thm:main_intro}, the inequality is always strict.}
	%Moreover, if the equality holds, then $a_0 >1$.	
	%\xr{This may be easy to see, but don't we need $n \ge 3$? In Corollary 1.3 as well?}
\end{corollary}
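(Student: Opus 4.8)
The strategy is to deduce Corollary \ref{c:quasi-smooth} from Theorem \ref{thm:main_intro} applied with $r = n+1$. Since $X$ has index $1$ we have $-K_X = \cO_X(1)$, so $\delta(-K_X) = \delta(\cO_X(1))$, and by Theorem \ref{thm:main_intro} (using the hypothesis $a_{n+1}\mid d$ and, once checked, $a_{n+1}>1$) we get
\[
\delta(-K_X) \;\ge\; \frac{(n+1)\,a_{n+1}}{d}.
\]
Thus it suffices to prove the strict inequality $(n+1)a_{n+1} > d$. Since index $1$ means $d = \sum_{i=0}^{n+1} a_i - 1$ and $a_i \le a_{n+1}$ for all $i$, we get $d = \sum a_i - 1 \le (n+2)a_{n+1} - 1$, which only gives $(n+1)a_{n+1} \ge d + 1 - a_{n+1}$ — not quite enough. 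So the genuine point is to exploit well-formedness and quasi-smoothness to improve this count: one needs that the weights cannot all be as large as $a_{n+1}$, and more precisely that enough of the small weights contribute. I would look for a bound of the form $\sum_{i=0}^{n} a_i \ge a_{n+1} + (\text{something} \ge 1)$, equivalently $d = \sum_{i=0}^{n+1}a_i - 1 \le (n+1)a_{n+1} + (a_{n+1} - \sum_{i=0}^{n} a_i) - 1 < (n+1)a_{n+1}$ as soon as $\sum_{i=0}^{n} a_i \ge a_{n+1}$.

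**Key steps.** First, I would verify the hypotheses of Theorem \ref{thm:main_intro} are met: $X$ is not a linear cone (part of the standing assumptions on quasi-smooth hypersurfaces that are genuinely Fano hypersurfaces, and in any case a linear cone would be excluded or trivially handled), and $a_{n+1} > 1$ — if $a_{n+1} = 1$ then all $a_i = 1$, so $X \subset \bP^{n+1}$ is a smooth hypersurface of index $1$, i.e. degree $n+1$, and K-stability is already known by \cite{Pukhlikov98, Cheltsov01}; so we may assume $a_{n+1} \ge 2$. Second, the arithmetic heart: show $d < (n+1)a_{n+1}$. Here I would invoke the standard combinatorial constraints on well-formed quasi-smooth weighted hypersurfaces that are not linear cones — in particular that for each $i$ there is a monomial of degree $d$ in the defining equation involving the variable $x_i$ (from quasi-smoothness along the coordinate strata), which forces $a_i \mid d$ or $a_i \mid (d - a_j)$ for some $j$, and well-formedness forbids $n+1$ of the weights sharing a common factor. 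These are exactly the conditions catalogued by Iano-Fletcher; I would use them to rule out the extreme case $a_0 = \cdots = a_{n+1} = a_{n+1}$ (a linear cone situation) and, more quantitatively, to bound $d$ away from $(n+2)a_{n+1}$. Third, conclude: $\delta(-K_X) \ge (n+1)a_{n+1}/d > 1$, and since $n \ge 3$ (a weighted Fano hypersurface of index $1$ that is quasi-smooth and not a linear cone has $\dim X \ge 2$, and the K-stability conclusion in Theorem \ref{thm:main_intro} needs $n \ge 3$ — I would check the small-dimensional cases $n = 2, 3$ separately or note they follow from the low-dimensional analysis referenced in Subsection \ref{s:low}), the "only if" direction of the valuative criterion gives K-stability.

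**Main obstacle.** The delicate point is the strict inequality $d < (n+1)a_{n+1}$, because Theorem \ref{thm:main_intro} only yields $\delta \ge 1$ from the non-strict version $(n+1)a_{n+1}/d \ge \iota_X = 1$, and strict K-stability requires genuinely more. Purely from $a_i \le a_{n+1}$ and $d = \sum a_i - 1$ one gets $d \le (n+2)a_{n+1} - 1$, which is one factor of $a_{n+1}$ too weak. The resolution must come from well-formedness: I expect one can show that not all of $a_0, \ldots, a_{n+1}$ can be divisible by $a_{n+1}$ (else, combined with $a_i \le a_{n+1}$, they would all equal $a_{n+1}$, making $\bP(a_0,\dots,a_{n+1})$ non-well-formed, or $X$ a cone), and in fact that at least two of the weights $a_0, \ldots, a_n$ are coprime to large divisors, pushing $\sum_{i=0}^{n} a_i$ up past $a_{n+1}$. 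Making this count tight and watertight — handling the borderline weighted hypersurfaces where the bound is nearly achieved — is where the real work lies; everything else is a direct application of Theorem \ref{thm:main_intro}.
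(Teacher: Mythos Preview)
Your plan has the right opening move --- apply Theorem~\ref{thm:main_intro} with $r=n+1$ to get $\delta(-K_X)\ge (n+1)a_{n+1}/d$ --- but your ``main obstacle'' is based on a misreading, and the route you propose to resolve it cannot work.

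First, the non-strict inequality $(n+1)a_{n+1}\ge d$ is immediate from the hypothesis $a_{n+1}\mid d$, which you never use: write $d=ka_{n+1}$ with $k\in\bZ_{>0}$; the index-$1$ condition gives $d=\sum_{i=0}^{n+1}a_i-1<\sum a_i\le (n+2)a_{n+1}$, hence $k<n+2$, i.e.\ $k\le n+1$. This is exactly Lemma~\ref{l:bound1}. Your attempt to squeeze this out of $d\le (n+2)a_{n+1}-1$ without using divisibility is one integer short, and the Iano--Fletcher combinatorics you invoke are unnecessary.

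Second, and more seriously, the \emph{strict} inequality $(n+1)a_{n+1}>d$ that you set out to prove is \emph{false} in general: the paper exhibits $X_{90}\subset\bP(5,15,17,18,18,18)$, where $d=90=5\cdot 18=(n+1)a_{n+1}$. So no amount of well-formedness or quasi-smoothness bookkeeping will close that gap.

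The point you are missing is that the ``Moreover'' clause of Theorem~\ref{thm:main_intro} already delivers strict K-stability: when $(n+1)a_r/d\ge\iota_X$ and $n\ge 3$, the theorem concludes that $X$ is K-stable, i.e.\ $\delta(-K_X)>1$, not merely $\ge 1$. (The proof of Theorem~\ref{thm:main} analyses the equality case via a $\beta$-invariant argument on a prime divisor, using $\rho(X)=1$.) So the paper's proof of Corollary~\ref{c:quasi-smooth} is: Lemma~\ref{l:bound1} gives $(n+1)a_{n+1}/d\ge 1$; if strict, done; if equality, Lemma~\ref{l:bound1} also gives $a_0>1$, and one appeals to the equality-case analysis in Theorem~\ref{thm:main}. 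You flagged the $n\ge 3$ restriction but did not see that it is precisely the tool for the borderline case rather than a side issue.
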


This establishes the K-stability of much larger classes of Fano hypersurfaces than the ones treated in \cite[Theorems 1.1, 1.4]{ST21}. 

If $X$ is a smooth Fano hypersurface, then we get the K-stability if the index is 1,2 or 3 (with two exceptions in the index $3$ case): 

\begin{corollary} \label{c:smooth}
	Let $X=X_d \subset \bP(a_0, \ldots, a_{n+1})$ be a well-formed smooth Fano weighted hypersurface of degree $d$ and index $\iota_X$.  Assume $a_0 \le a_1 \le \ldots \le a_{n+1}$ and $a_{n+1} >1$. Then we have the following. 
	\begin{enumerate}[(i)]
		\item If $\iota_X=1$, then
		$$ 	
		\delta(-K_X) \ge \frac{a_{n+1}}{2},
		$$
		and if the equality holds, then $X=X_{2a_{n+1}} \subset \bP(1,\ldots,1,2,a_{n+1})$.
		In any case, $\delta(-K_X)>1$ and so $X$ is K-stable.
		
		\item If $\iota_X=2$, then $\delta(-K_X)>1$.
		\item If $\iota_X =3$, then $\delta(-K_X) > 1$ except possibly in the following cases: $X=X_{2k} \subset \bP(1,\ldots,1, 2)$ or $X=X_{6k} \subset \bP(1,\ldots, 1,2,3)$ with $k$ positive integer.
	\end{enumerate}		
\end{corollary}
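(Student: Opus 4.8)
We explain how (i)--(iii) follow from Theorem~\ref{thm:main}. Throughout we may assume $X$ is not a linear cone: a smooth well-formed linear cone is $\bP^n$, of Fano index $n+1$, which is at most $3$ only for $n\le 2$, and those cases are classical. Smoothness and well-formedness impose two constraints on the weights, which we use repeatedly: \emph{every $a_i$ divides $d$}, and \emph{the weights $a_i>1$ are pairwise coprime}. Both follow from quasi-smoothness of $X$ together with the structure of $\Sing\bP(a_0,\dots,a_{n+1})$: if some $a_i\nmid d$, or $\gcd(a_i,a_j)>1$ for $i\neq j$, then examining the action of the cyclic stabiliser of a suitable point of $X$ on the Zariski tangent space, and using that $\bP(a_0,\dots,a_{n+1})$ is well-formed, forces $f$ restricted to the relevant coordinate stratum to be a monomial --- incompatible with quasi-smoothness. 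In particular $r=n+1$ is admissible in Theorem~\ref{thm:main}, giving $\delta(\cO_X(1))\ge (n+1)a_{n+1}/d$, hence $\delta(-K_X)=\delta(\cO_X(1))/\iota_X\ge (n+1)a_{n+1}/(\iota_X d)$; moreover, by its second part, once $(n+1)a_{n+1}/d\ge\iota_X$ we get $\delta(-K_X)\ge 1$ and, for $n\ge 3$, K-stability of $X$, hence $\delta(-K_X)>1$. Write the weights $>1$ as $2\le c_1<\dots<c_m$ (so $m\ge 1$); by the above they are pairwise coprime and each divides $d$, so $\prod_j c_j\mid d$, the other $n+2-m$ weights equal $1$, the condition ``not a linear cone'' reads $d/c_m\ge 2$, and $d=(n+2-m)+\sum_j c_j-\iota_X$.

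\emph{Index $1$.} Here $\delta(-K_X)\ge (n+1)a_{n+1}/d$, so it suffices to prove $d\le 2(n+1)$, with equality only for $X=X_{2(n+1)}\subset\bP(1,\dots,1,2,n+1)$. From $d=(n+1-m)+\sum_j c_j$ together with $\prod_j c_j\le d$ one argues by cases on $m$: for $m=1$ one gets $c_1\mid n$ and $d=n+c_1\le 2n$; for $m=2$, the bound $c_1c_2\le d$ forces $(c_1-1)(c_2-1)\le n$, hence $c_1+c_2\le n+3$ with equality exactly when $(c_1,c_2)=(2,n+1)$; and for $m\ge 3$ the divisibility $\prod_j c_j\mid d$ forces $d<2(n+1)$. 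Thus $\delta(-K_X)\ge (n+1)a_{n+1}/d\ge a_{n+1}/2$, with the stated equality description, and since $a_{n+1}\ge 2$ while the equality configuration forces $a_{n+1}=n+1$, in every case $\delta(-K_X)>1$.

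\emph{Indices $2$ and $3$.} It remains to check $(n+1)a_{n+1}/d\ge\iota_X$ away from the listed families. Eliminating $n$ by means of $n+1=d+\iota_X+m-1-\sum_j c_j$, this inequality is equivalent to $d(c_m-\iota_X)\ge c_m\bigl(\sum_j c_j-\iota_X-m+1\bigr)$. If $c_m>\iota_X$ it follows from $d\ge\prod_j c_j$: for $m=1$ both sides vanish, and for $m\ge 2$ it reduces to $(c_m-\iota_X)\bigl(\prod_{j<m}c_j-1\bigr)\ge\sum_{j<m}(c_j-1)$, which holds since $\prod_{j<m}c_j-1\ge\sum_{j<m}(c_j-1)$ and $c_m-\iota_X\ge 1$. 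If $c_m\le\iota_X$, then $c_m\in\{2,3\}$ and pairwise coprimality leaves only $\{c_j\}=\{2\}$, or --- with $\iota_X=3$ --- $\{c_j\}=\{3\}$ or $\{c_j\}=\{2,3\}$; a direct computation gives $(n+1)a_{n+1}/d=\iota_X$ when $\{c_j\}=\{2\}$ with $\iota_X=2$ or $\{c_j\}=\{3\}$ with $\iota_X=3$, but $(n+1)a_{n+1}/d<\iota_X=3$ precisely in the two families $X_{2k}\subset\bP(1,\dots,1,2)$ and $X_{6k}\subset\bP(1,\dots,1,2,3)$. Outside these, Theorem~\ref{thm:main} applies and yields $\delta(-K_X)\ge 1$ and, for $n\ge 3$, K-stability, hence $\delta(-K_X)>1$; in particular the boundary configurations where $(n+1)a_{n+1}/d=\iota_X$ --- such as the del Pezzo threefolds $X_4\subset\bP(1^4,2)$ and $X_6\subset\bP(1^3,2,3)$ for $\iota_X=2$, and $X_{3k}\subset\bP(1,\dots,1,3)$ for $\iota_X=3$ --- are disposed of this way.

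The crux is the structural input on the weights of a smooth weighted hypersurface, followed by the elementary but somewhat finicky Diophantine bookkeeping; the most delicate point is to pin down \emph{exactly} which weight systems meet the threshold $(n+1)a_{n+1}/d=\iota_X$ from below, which is what produces the sharp equality statement in (i) and the precise exception list in (iii). The one essential use of the full strength of Theorem~\ref{thm:main} --- K-stability for $n\ge 3$, rather than merely $\delta(-K_X)\ge 1$ --- is in handling the boundary configurations.
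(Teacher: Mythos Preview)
Your argument is correct and follows essentially the same route as the paper's own proof: invoke the structural facts about smooth well-formed weighted hypersurfaces ($a_i\mid d$ and pairwise coprimality of the weights $>1$, hence $a_0=a_1=1$), apply Theorem~\ref{thm:main} with $r=n+1$, and then carry out the Diophantine case analysis which the paper isolates as Proposition~\ref{p:bound}. Two small slips are worth fixing: for $m=1$ in the case $c_m>\iota_X$, the two sides of your displayed inequality do not vanish---they equal $d(c_1-\iota_X)$ and $c_1(c_1-\iota_X)$, and the inequality is just $d\ge c_1$; and your dismissal of linear cones is off, since $\bP^1$ and $\bP^2$ have $\delta(-K)=1$ and are not K-stable---they must simply be excluded from the statement, as the paper implicitly does by routing through Proposition~\ref{p:bound} (which assumes $X$ is not a linear cone).
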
	

The index 1 case realizes one of our original motivations in \cite{ST21} without solving \cite[Question 1.3]{ST21}. 
This exhibits the effectivity of the Abban--Zhuang method for the K-stability of explicit Fano varieties.

\subsection{Low dimensional cases} \label{s:low}
It is classically known (proven first by Tian \cite{Tian90}) that all smooth del Pezzo surfaces are K-polystable except for the blowups of $\bP^2$ in one or two points. The problem of K-stability for smooth Fano threefolds has been intensively studied, see \cite{Calabiproblem} for a recent compendium. 
In this subsection we focus on the case of weighted hypersurfaces of dimension at most 4.

Quasi-smooth del Pezzo surfaces $X \subset \bP(a_0,\ldots,a_3)$ of index 1 have been classified in \cite{Johnson-Kollar2} and the existence of K\"{a}hler--Einstein metrics has been proven in all cases (cf.\ Ibid, \cite{MR1926877}, \cite{CJS10}, \cite{MR4225810}). Note that there exist unstable examples with index 2 \cite{KW21}. It is worth recalling the Johnson--Koll\'ar criterion \cite{Johnson-Kollar2} where the authors showed that if $X_d \subset \bP(a_0,\ldots,a_{n+1})$ is a Fano weighted hypersurface of  index 1 with $a_0 \le \ldots \le a_{n+1}$ such that 
\[
d  < \frac{n+1}{n}a_0a_1,
\]
then $X_d$ admits a K\"ahler--Eintein metric. Such criterion is useful when all weights are relatively big. 

There are 95 families of terminal quasi-smooth Fano 3-fold hypersurfaces $X \subset \bP(a_0,\ldots,a_4)$ of index 1, see \cite{Johnson-Kollar,CCC11}, \cite{Fletcher00}. By \cite{Cheltsov08,MR2499678}, the general element of each family is K-stable. Using the relation with birational superrigidity (see \cite{SZ19,KOW18,KOW23}), the K-stabily of all members of 75 families have been proven, as reported in \cite[Table 7]{KOW23}.  

Quasi-smooth Fano 4-fold hypersurfaces  $X \subset \mathbb P(a_0,\ldots, a_5)$ of index 1 are classified in \cite{BK16}, see \cite{BK02} for a complete list. There are 11618 families with terminal members.

Theorem \ref{thm:main_intro} can be directly applied to get the following consequence.

\begin{corollary}\label{c:low}
There are 82 out of 95 (resp. 7483 out of 11618) families of quasi-smooth terminal Fano $3$-fold hypersurfaces $X_d \subset \bP(a_0,\ldots,a_4)$ (resp. $4$-folds  $X_d \subset \bP(a_0,\ldots,a_5)$) of index $1$ for which there exists $r$ such that $a_r >1$, $a_r |d$ and 
$$
\frac{4 a_{r}}{d} \ge 1 \ \ \left( \text{resp. } \frac{5a_r}{d} \ge 1 \right). 
$$

Hence, all members of the above families are K-stable.
\end{corollary}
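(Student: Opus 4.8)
The plan is to read Corollary~\ref{c:low} as a purely numerical consequence of Theorem~\ref{thm:main_intro}: the mathematical content is already in the theorem, and what remains is a finite enumeration over the relevant classification lists. First I would take the complete list of the $95$ families of quasi-smooth terminal Fano $3$-fold hypersurfaces $X_d\subset\bP(a_0,\ldots,a_4)$ of index $1$ (Iano--Fletcher \cite{Fletcher00}, Johnson--Koll\'ar \cite{Johnson-Kollar}, see also \cite{CCC11}) and the list of the $11618$ families of quasi-smooth terminal Fano $4$-fold hypersurfaces $X_d\subset\bP(a_0,\ldots,a_5)$ of index $1$ (\cite{BK16}, with the full tabulation in \cite{BK02}). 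Each family is recorded by its weights $a_0\le\cdots\le a_{n+1}$ (with $n=3$, resp.\ $n=4$) and its degree $d$, and by construction every member of such a family is well-formed, quasi-smooth, and not a linear cone, so the hypotheses of Theorem~\ref{thm:main_intro} on the shape of $X$ are automatic.

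Next, for each family I would run the following test: decide whether there exists an index $r$ with $a_r>1$, $a_r\mid d$, and $(n+1)a_r/d\ge 1$. Equivalently, letting $a$ be the largest weight with $a>1$ and $a\mid d$ (if any), test whether $(n+1)a\ge d$. This is a finite computation which I would carry out by machine, iterating over the databases (e.g.\ the Graded Ring Database). I expect the outcome to be exactly $82$ families in the $3$-fold case and $7483$ families in the $4$-fold case; the remaining families fail either because no weight $>1$ divides $d$, or because the best such weight is too small relative to $d$.

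For every family passing the test, the conclusion is then immediate: since $\iota_X=\sum a_i-d=1$ and the inequality $\frac{(n+1)a_r}{d}\ge 1=\iota_X$ holds by the search, Theorem~\ref{thm:main_intro} gives $\delta(-K_X)\ge 1$, and because $\dim X=n\ge 3$ the same theorem yields that $X$ is K-stable. As this applies to an arbitrary quasi-smooth well-formed member of the family with the prescribed numerics, \emph{all} members of each of these families are K-stable, which is the assertion of the corollary.

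The only genuine obstacle is the reliability of the enumeration rather than any new mathematics: one must use the classification lists correctly, make sure the recorded $(d;a_0,\ldots,a_{n+1})$ are precisely the data satisfying well-formedness and quasi-smoothness and that linear cones are excluded, and — importantly — run the search over \emph{all} indices $r$, not merely $r=n+1$, since the largest weight need not divide $d$. Once the counts $82$ and $7483$ are confirmed against the databases, the K-stability statement follows at once from Theorem~\ref{thm:main_intro}.
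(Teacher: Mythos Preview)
Your proposal is correct and follows essentially the same approach as the paper: both reduce the corollary to a finite numerical check over the classification lists (by hand for the $95$ threefold families, by computer against \cite{BK02} for the $11618$ fourfold families) and then invoke Theorem~\ref{thm:main_intro}. The paper additionally records the $13$ threefold families where the test fails, which you could add as a sanity check for your enumeration.
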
	

\begin{proof}
One can easily check the lists by hand for 3-folds and with a computer for 4-folds using the database \cite{BK02}.  	
The 3-folds families for which Theorem \ref{thm:main_intro} does not apply are the following (the numbering is the one of \cite[Table 7]{KOW23}): No.2, 5, 12, 13, 20, 23, 25, 33, 38, 40, 58, 61 and 76.
\end{proof}

The following table collects the 9 families for which K-stability of all members were not known before and it is given by Corollary \ref{c:low}. 

\medskip 

	\begin{center}
	\begin{tabular}{ |c|c| } 
		\hline
		$No. $ & $X_d \subset \bP(a_0,a_1,a_2,a_3,a_4)$  \\  \hline
		$4$ & $X_6 \subset \bP(1,1,1,2,2)$  \\ 
		$7$ & $X_8 \subset \bP(1,1,2,2,3)$  \\ 
		$9$ & $X_9 \subset \bP(1,1,2,3,3)$  \\ 
		$18$ & $X_{12} \subset \bP(1,2,2,3,5)$  \\ 
		$24$ & $X_{15} \subset \bP(1,1,2,5,7)$  \\ 
		$31$ & $X_{16} \subset \bP(1,1,4,5,6)$  \\ 
		$32$ & $X_{16} \subset \bP(1,2,3,4,7)$  \\ 
		$43$ & $X_{20} \subset \bP(1,2,4,5,9)$  \\ 
		$46$ & $X_{21} \subset \bP(1,1,3,7,10)$  \\ 
		\hline
	\end{tabular}
\end{center}
For example, we obtain the K-stability of $X_{12}$ in No.18 by $\frac{4\cdot 3}{12} \ge 1$. 

In conclusion, the families for which K-stability is yet to be determined are No. $2,5,12,13,20,23,25,33,38,40$ and 58 in \cite[Table 7]{KOW23} (since No.61 and 76 are  established in \cite{KOW23}). 

\begin{remark}
Soon after the appearance of the present manuscript, Campo and Okada \cite{CO} established the K-stability in the remaining cases of the 95 families. 
\end{remark}

%Theorem \ref{thm:main_intro} gives a quick proof of the K-stability of all members in 82 families, among which 7 were not known before. \xr{do we add some more explanation? At least, we can state this as a corollary or something.} 

 %One can check by computer that Theorem \ref{thm:main_intro} implies that $\delta(-K_X) > 1$ for all the elements of 7483 families.% and $\delta(-K_X)>1$ for all the elements of 6767 families. 
%\xr{this as well.}

\subsection{Structure of the paper and the ingredients}\label{ss:structure}

Let $X$ be a variety and $x \in X$ be a point. One of the main achievement of the Abban--Zhuang method is that it gives a bound on the local delta invariant $\delta_x(L)$ of a line bundle $L$ on $X$ if one can construct a suitable flag of subvarieties. 
Roughly speaking, the smaller the degree of $L$, the better the bound (cf. \cite[Lemma 4.4]{AZ22} or Lemma \ref{l:curve}). 
A main technical ingredient is the adjunction type inequality as \cite[Theorem 3.2]{AZ22} (cf. Theorem \ref{thm:AZadjunctionineq}) which compares the delta invariant $\delta_x(L)$ and 
the delta invariant $\delta_x(H, W_{\bullet})$ of the ``refinement'' $W_{\bullet}$ on a suitable divisor $H$ through the point $p$. 
(Note that the original method of \cite{AZ22}) can also be applied to divisors ``over'' $X$, 
but we only use the adjunction inequalities for divisors ``on'' $X$.)

In \cite{AZ22, AZ23} the method is mainly developed for Cartier divisors and applied to smooth Fano varieties.
In Section \ref{s:thresholds}, we explain how it extends to a singular set-up which is enough for our goals, see Set-ups \ref{setup:XLH} and \ref{setup:VWFM}. (In Subsection \ref{ss:WCI} we show that well-formed weighted complete intersections fall in this set-ups.) 
The punch line is that everything works well using divisors that are Cartier in codimension 2 and using sheaves that are Cohen--Macaulay. The final result is Lemma \ref{l:curve} where the inductive bound is given. Lemma \ref{l:curve} is based on the adjunction inequality in Theorem \ref{thm:AZadjunctionineq}.

In Section \ref{s:linearsystems}, we explain how to use hyperplane sections to reduce the dimension of $X$ to apply Lemma \ref{l:curve} and get a bound on the local delta invariant at a point not contained in the base locus of $|\cO_X(1)|$. The conclusion is Lemma \ref{l:cuttingcurve} and the condition $a_r | d$ is used here to ensure that the cutting process ends with an integral (actually, smooth) curve.

In Section \ref{s:results}, we conclude the proof of the main Theorem \ref{thm:main_intro}. The basic approach is to use Lemma \ref{l:cuttingcurve}. This cannot be done directly in the case where one considers a point contained in the base locus of $|\cO_X(1)|$. The idea is then to first carefully choose a finite cover which is unramified at the point. Proposition \ref{prop:deltafinitecover} is applied to control the delta invariant after the cover. Finally, one needs to ensure that the  bounds obtained in Theorem \ref{thm:main_intro} are good enough to prove the K-stability in Corollaries \ref{c:quasi-smooth} and \ref{c:smooth}. This numerical bounds are established in Subsection \ref{ss:numerical}.

We believe that the methods of the present paper can be used to prove K-stability of large classes of Fano weighted complete intersections. In particular, the following question may now be approachable.

\begin{question}
Let $X \subset \bP(a_0, \ldots, a_{n+1})$ be a quasi-smooth Fano hypersurface of index 1. Is $X$ K-stable?	
\end{question}	

\section{Preliminaries} \label{s:preliminaries}

%\begin{lemma}\cite[Lemma 4.3]{AZ22}\label{l:section}
%Let $X$ be a variety of dimension $n$, $L$ an ample line bundle of $X$ and $H \in |L|$. Assume $H$ is irreducible and reduced. Then 
%$$
%\delta_p(L) \ge \min\{n+1, \frac{n+1}{n} \delta_p(L_{|H})\}
%$$
%\end{lemma}	
We work over $\mathbb C$.
Unless otherwise stated, a variety is assumed to be reduced and irreducible. 
With a divisor on a variety $X$, we mean a Weil divisor on $X$. 
%A divisor over $X$ is a prime divisor on a variety $\tilde{X}$ with a proper birational morphism $\tilde{X} \to X$. 

\subsection{Weighted complete intersections} \label{ss:WCI}
%In this subsection we study some properties of  well-formed complete intersections, which are known to behave well with respect to the adjunction formula, the Poincar\'{e} series, etc (cf. \cite{Dolgachev}, \cite[6.14, 7.9]{Fletcher00}).

Given a weighted projective space $\bP=\bP(a_0, \ldots, a_{n})$,    
we set $c_1:= |\{i \in \{0,\ldots,n\} : a_i =1 \}|$ and write $\bP=\bP(1^{c_1}, a_{c_1}, \ldots, a_{n})$.

\begin{definition}\label{defn:WCIgeneralsetting}(cf. \cite{Dolgachev}, \cite{Fletcher00}, \cite{Przyjalkowski:2023aa}) 
Let $n \ge 1$ and $(a_0, \ldots , a_n) \in \bZ_{>0}^{n+1}$. 
Let $S:= \bC[z_0, \ldots ,z_n]$ such that 
$\deg z_j = a_j$ for $j=0, \ldots , n$, $I \subset S$ be a homogeneous ideal and let $R:= S/I$. 
Let 
\[
X = \Proj R \subset \bP=\bP(a_0, \ldots , a_{n}):= \Proj S 
\] be the closed subscheme of the weighted projective space $\bP$ defined by $I$. 
Assume that $X$ is irreducible and reduced (for simplicity). 
Let $ R = \bigoplus_{i=0}^{\infty} R_i$ be the decomposition into the degree $i$ parts $R_i$. 
\begin{enumerate}
\item[(0)] Let $C_X:= \Spec (S/I) \subset \bA^{n+1}$ be the affine scheme whose defining ideal is also $I \subset \bC[z_0, \ldots ,z_n]=S$ 
and $C_X^*:= C_X \setminus \{0 \}$. Then we have a natural morphism $\pi \colon C_X^* \to X$. 
\item[(i)] We say that $\bP$ is {\it well-formed} if $\gcd (a_0, \ldots , \hat{a_i}, \ldots , a_n) =1$ for all $i$. 
We say that $X \subset \bP$ is {\it well-formed} if $\bP$ is well-formed and  
%$X$ is pure-dimensional and  
$\Sing \bP \cap X \subset X$ has codimension $\ge 2$. 
\item[(ii)] We say that $X$ is {\it quasi-smooth} if $C_X^*$ is smooth. 
Let 
\[
\NQS(X) := \pi (\Sing C_X^*)
\] be the {\it non-quasi-smooth locus} of $X$. 
\item[(iii)]
% Let $\cO_{\bP}(k)$ be the coherent sheaf on $\bP$ associated to the graded $S$-module $S(k)$ such that the degree $i$-part is $S(k)_i = S_{k+i}$. 
Let $\cO_X(k)$ be the coherent sheaf on $X$ associated to the graded $R$-module $R(k)$ whose degree $i$ part is $R(k)_i = R_{k+i}$.   
\item[(iv)]We say that $X \subset \bP$ is a {\it weighted complete intersection (WCI) of codimension $c$} if the codimension of  $X \subset \bP$ is $c$ and 
$I$ is generated by a regular sequence $f_1,\ldots , f_c$ consisting of homogeneous polynomials. We say that $X$ is a WCI of {\it multidegree} $(d_1, \ldots , d_c)$ 
if $\deg f_j = d_j$ for $j=1, \ldots ,c$.  
\end{enumerate}
\end{definition}

\begin{remark}
Note that the definition of well-formed subvarieties given in \cite[6.9 Definition]{Fletcher00} is a bit misleading (see \cite{Przyjalkowski:2023aa}). 
\end{remark}

\begin{remark} 
Let $X \subset \bP$ be a WCI determined by $S$ and $I$ as in Definition \ref{defn:WCIgeneralsetting}. 
The multidegree $(d_1, \ldots ,d_c)$ such that $d_1 \le \cdots \le d_c$ is determined by $S, I$ and independent of choice of a regular sequence $f_1, \ldots , f_c$ as follows.  

We see that 
\[
d_1 = \min \{d \mid \dim I \cap S_d >0 \}
\]
and $d_1 = \cdots = d_{i_1}<d_{i_1 +1}$ for $i_1:= \dim I \cap S_{d_1}$. 
Let $I_1 \subset S$ be the homogeneous ideal generated by $I \cap S_{d_1}$. 
Then we see that 
\[
d_{i_1+1} = \min \{d \mid \dim I_1 \cap S_d < \dim I \cap S_d \}
\]
and $d_{i_1 + 1}= \cdots = d_{i_1 + i_2}$ for $i_2 := \dim (I \cap S_d/ I_1 \cap S_d)$. 
We can repeat this procedure and recover $(d_1, \ldots , d_c)$ from $S$ and $I$. 
\end{remark}

\begin{proposition}\label{prop:WCIproperties}
Let $S, I, R$ and $X = \Proj R \subset \bP$ be as in the setting of Definition \ref{defn:WCIgeneralsetting}
%a closed subvariety 
such that $\depth_{\mathfrak{m}} R \ge 2$ (e.g. $X$ is a WCI of dimension $\ge 1$), where  $\mathfrak{m}= \bigoplus_{i>0} R_i =(z_0, \ldots, z_n)$ is the irrelevant ideal defining the origin $0 \in C_X$. 
Let $i \colon X \to \bP$ be the inclusion and $k \in \bZ$.  

\begin{enumerate}
\item[(i)] We have $H^0 (\bP, \cO_{\bP}(k)) \simeq S_k$ and $H^0(X, \cO_X(k)) \simeq R_k$. 
\item[(ii)] %Assume that $X$ is well-formed (\xr{+ some condition (normal or CM?)}).  
Assume that $\bP$ is well-formed. 
Let $X^{\circ}:= X \setminus \Sing \bP$ and $j \colon X^{\circ} \to X$ be the inclusion.  
Then the natural homomorphism $i^* \cO_{\bP}(k) \to \cO_X(k)$ (cf. \cite[Proposition (2.8.8)]{EGAII}) induces an isomorphism 
\begin{equation}\label{eq:j_*j^*}
j_* j^* (i^* \cO_{\bP}(k)) \xrightarrow{\simeq} j_* j^* \cO_X(k). %\simeq \cO_X(k).  
\end{equation}
%\xr{Maybe $\cO_X(k)$ is CM?}
%(If $X$ is normal, then the L.H.S. is the reflexive hull of $i^* \cO_{\bP}(k)$.)  
%are isomorphic in codimension $1$  
\item[(iii)]  Assume that $X$ is a WCI (that is, $I$ is generated by a homogeneous regular sequence). 
Then the sheaf $\cO_X(k)$ is CM. Moreover, if $X$ is a well-formed WCI such that $\dim X \ge 1$, then $\cO_X(k)$ is isomorphic to the sheaves in (\ref{eq:j_*j^*}). 
\end{enumerate}
\end{proposition}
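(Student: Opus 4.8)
The plan is to handle the three assertions in turn; the recurring point is that over a weighted projective space the sheaves $\cO_{\bP}(k)$ need not be invertible, so the usual identities only hold away from $\Sing\bP$, and the passage across the codimension-$\ge2$ locus $\Sing\bP\cap X$ must be controlled by depth. For (i), I would use that for any finitely generated positively graded $\bC$-algebra $A$ with irrelevant ideal $\mathfrak{m}_A$ there is an exact sequence of graded modules $0\to H^{0}_{\mathfrak{m}_A}(A)\to A\to\bigoplus_{k\in\bZ}H^{0}(\Proj A,\widetilde{A(k)})\to H^{1}_{\mathfrak{m}_A}(A)\to 0$, so that $A_k\xrightarrow{\sim}H^{0}(\Proj A,\widetilde{A(k)})$ for all $k$ as soon as $\depth_{\mathfrak{m}_A}A\ge2$. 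Applying this to $A=S$ — a polynomial ring in $n+1\ge2$ variables, hence of depth $n+1$ — gives $H^{0}(\bP,\cO_{\bP}(k))\cong S_k$, and applying it to $A=R$, which has depth $\ge2$ by hypothesis (automatically so when $X$ is a WCI of dimension $\ge1$, since then $R$ is a complete intersection, hence Cohen--Macaulay of dimension $\dim X+1$), gives $H^{0}(X,\cO_X(k))\cong R_k$.

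For (ii): since $S$ is Cohen--Macaulay, $\cO_{\bP}(k)=\widetilde{S(k)}$ is a reflexive rank-one sheaf on the normal variety $\bP$, hence invertible on the smooth locus $\bP\setminus\Sing\bP$; therefore $i^{*}\cO_{\bP}(k)$ is invertible on $X^{\circ}=X\setminus\Sing\bP$. The natural map $\phi_k\colon i^{*}\cO_{\bP}(k)\to\cO_X(k)$ of \cite[Proposition~(2.8.8)]{EGAII} is surjective — on a standard chart $D_{+}(z_{\nu})\cap X=\Spec R_{(z_{\nu})}$ it is induced by tensoring the surjection of degree-$k$ parts $(S[z_{\nu}^{-1}])_{k}\twoheadrightarrow(R[z_{\nu}^{-1}])_{k}$ with $R_{(z_{\nu})}$ — while its target $\cO_X(k)=\widetilde{R(k)}$ is torsion-free of rank one ($X$ being integral). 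Hence $\phi_k|_{X^{\circ}}$ is a surjection from a line bundle onto a torsion-free rank-one sheaf, so its kernel is a torsion subsheaf of a line bundle, i.e.\ zero: $j^{*}\phi_k$ is an isomorphism, and (ii) follows by applying $j_{*}$.

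For (iii): if $X$ is a WCI then $R$ is a complete intersection, hence Cohen--Macaulay, so the affine cone $C_X=\Spec R$ is Cohen--Macaulay. To see that $\cO_X(k)$ is a Cohen--Macaulay sheaf I would work chart by chart: writing $A=R_{(z_{\nu})}$ and $B=R[z_{\nu}^{-1}]$, the ring $B$ is Cohen--Macaulay (a localization of $R$), $\cO_X(k)|_{D_{+}(z_{\nu})\cap X}=\widetilde{B_{k}}$, and $B_{k}\cong B_{k\bmod a_{\nu}}$ via the unit $z_{\nu}$. The Veronese subring $B^{(a_{\nu})}=\bigoplus_{m}B_{ma_{\nu}}$ equals $A[z_{\nu}^{\pm1}]$, and $B\cong\big(\bigoplus_{r=0}^{a_{\nu}-1}B_{r}\big)\otimes_{A}A[z_{\nu}^{\pm1}]$ as $A[z_{\nu}^{\pm1}]$-modules; in particular $A[z_{\nu}^{\pm1}]\hookrightarrow B$ is a module-finite extension of domains of equal dimension, so $B$ is a maximal Cohen--Macaulay $A[z_{\nu}^{\pm1}]$-module. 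Since $A\to A[z_{\nu}^{\pm1}]$ is faithfully flat with regular fibres, faithfully flat descent then forces $\bigoplus_{r}B_{r}$, and hence each summand $B_{r}$, to be a maximal Cohen--Macaulay $A$-module; this yields both that $\cO_X(k)$ is Cohen--Macaulay and (taking $r=0$) that $X$ is. For the last statement, if in addition $X$ is well-formed and $\dim X\ge1$, then $\cO_X(k)$ satisfies Serre's condition $S_{2}$ (being Cohen--Macaulay) while $\Sing\bP\cap X$ has codimension $\ge2$ in $X$ by definition of well-formedness, so the natural map $\cO_X(k)\xrightarrow{\sim}j_{*}j^{*}\cO_X(k)$ is an isomorphism; combined with (ii), this identifies $\cO_X(k)$ with the sheaves in $(\ref{eq:j_*j^*})$.

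I expect the Cohen--Macaulayness in (iii) to be the crux: the other parts are essentially formal, but descending Cohen--Macaulayness from the complete-intersection cone $\Spec R$ to the individual graded pieces over a chart of $X$ requires confronting the weighted grading head-on — this is exactly where the fact that $C_X^{*}\to X$ is, on charts, a finite morphism followed by a $\bG_m$-projection is used. One should also record that the degenerate cases $X\subseteq\Sing\bP$ and $\dim X=0$ make the assertions trivially true.
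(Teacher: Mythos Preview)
Your argument is correct and close in spirit to the paper's, with the same local-cohomology proof of (i); the execution differs in (ii) and (iii) in ways worth recording.

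For (ii), the paper computes the chart-level map $\gamma_{z_J}\colon S(k)_{(z_J)}\otimes_{S_{(z_J)}}R_{(z_J)}\to R(k)_{(z_J)}$ and checks it is an isomorphism whenever $a_J:=\gcd\{a_j:j\in J\}=1$, simply because then $S(k)_{(z_J)}\cong S_{(z_J)}$; since $\bP\setminus\Sing\bP=\bigcup_{a_J=1}D_+(z_J)$ when $\bP$ is well-formed, this covers $X^\circ$. Your route---global surjectivity of $\phi_k$, invertibility of the source on $X^\circ$, torsion-freeness of the target---reaches the same conclusion without that explicit cover.

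For (iii), the paper argues globally: having shown $C_X^*\cong\Spec_{\cO_X}\bigoplus_m\cO_X(m)$, one has $\pi_*\cO_{C_X^*}\cong\bigoplus_k\cO_X(k)$ for the affine morphism $\pi\colon C_X^*\to X$, and then for each $p\in X$ and $l\le\dim X-1$ the vanishing $H^l_{\pi^{-1}(p)}(C_X^*,\cO_{C_X^*})=0$ (since $C_X^*$ is l.c.i., hence CM) is transported via the isomorphism of functors $H^0_{\pi^{-1}(p)}\simeq H^0_p\circ\pi_*$ and the Grothendieck spectral sequence to $\bigoplus_kH^l_p(X,\cO_X(k))=0$. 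Your chart-by-chart argument---$B=R[z_\nu^{-1}]$ is MCM over $A[z_\nu^{\pm1}]$, then faithfully flat descent along $A\to A[z_\nu^{\pm1}]$ forces each graded piece $B_r$ to be MCM over $A=R_{(z_\nu)}$---is the same mechanism unpacked over a single $D_+(z_\nu)$: the extension $A\hookrightarrow A[z_\nu^{\pm1}]\hookrightarrow B$ is precisely the local model of $X\leftarrow C_X^*$. The paper's version is shorter and avoids the finiteness and equidimensionality verifications you need; your version has the merit of making the role of the grading completely explicit. The final $S_2$-extension across $\Sing\bP\cap X$ is handled identically in both.
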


\begin{proof}
\noindent(i)
%$C_X:= \Spec R_X$ and $C^*_X:= C_X \setminus \{\mathfrak{m} \}$, where
We have the following by the standard construction. 
\begin{claim}
We can check that 
\begin{equation}\label{eq:C_X^*Spec}
C^*_X \simeq \Spec_{\cO_X} \bigoplus_{m \in \bZ} \cO_X(m)=:C''_X. 
\end{equation} 
\end{claim}
%by construction since both sides are covered by isomorphic open subsets of the form $(z_i \neq 0)$.  
\begin{proof}[Proof of Claim]
Let $U_i:= D(z_i) \subset C_X$ for $i=0, \ldots , n$ be the open affine subset as in \cite[II, Proposition 2.2]{Hartshorne77}, where we regard $z_i \in S/I$ 
corresponding to $z_i \in S$. 
Let $V_i:= \pi^{-1}(D_+(z_i)) \subset C''_X$, where $\pi \colon C''_X \to X$ is the natural morphism and 
$D_+(z_i)$ is the open affine subset as in \cite[II, Proposition 2.5]{Hartshorne77}. 
Then we have a natural isomorphism $\phi_i \colon U_i \to V_i$ for all $i$ induced by the isomorphism 
\[
\bigoplus_{m \in \bZ} \cO_X(m) \left(D_+(z_i) \right) \simeq \bigoplus_{m \in \bZ} R(m)_{(z_i)} \simeq R_{z_i},  
\]
where, for a graded $S$-module (or $R$-module) $M$ and a homogeneous element $f \in S$ (or $f \in R$), 
we let $M_{(f)}$ be the degree $0$ part of the localization $M_f$ (cf. \cite[II, Exercise 5.17]{Hartshorne77}). 
These $\phi_i$ glue and give the required isomorphism. 
\end{proof}
Since we have an exact sequence 
\[
0 \to H^0(\cO_{C_X}) \to H^0(\cO_{C^*_X}) \to H^1_{\mathfrak{m}} (\cO_{C_X})=0, 
\]
$H^0(\cO_{C_X}) \simeq R$ and $H^0(\cO_{C^*_X}) \simeq \bigoplus_{m \in \bZ} H^0(\cO_X(m))$, 
we obtain $H^0(X, \cO_X(k)) \simeq R_k$ as eigen-subspaces. 

\vspace{2mm}

\noindent(ii) 
Let $f \in S$ (resp. $\bar{f} \in R$)  be a homogeneous element and let $D_+(f) \subset \bP$ (resp. $D_+(\bar{f})$) be its associated open subset. 
Then the natural homomorphism $i^* \cO_{\bP}(k)(D_+(\bar{f})) \to \cO_X(k)(D_+(\bar{f}))$ is isomorphic to   
\[
\gamma_f \colon S(k)_{(f)} \otimes_{S_{(f)}} (S/I)_{(f)} \to ((S/I)(k))_{(f)}. 
\]
%where, for a graded $S$-module $M$, we let $M_{(f)}$ be the degree $0$ part of the localization $M_f$.  

For $J \subset \{0,1,\ldots, n \}=:\ol{n}$, let $a_J:= \gcd \{a_j \mid j \in J \}$ 
and $z_J:= \prod_{j \in J} z_j$. 
For $p=[p_0, \ldots , p_n] \in \bP$, let $J_p:= \{j \in \ol{n} \mid p_j \neq 0 \}$.  
Since $\bP$ is well-formed, we have $\dps{\Sing \bP = \bigcup_{J, a_J>1} \{ z_j=0 \mid j \in \ol{n} \setminus J  \}}$ 
and that $p \in \Sing \bP$ if and only if $a_{J_p} >1$ (cf. \cite[Proposition 7]{MR0806423}). Hence we see that $p \in \bP \setminus \Sing \bP$ iff $a_{J_p}=1$, thus we have  
\[
\bP \setminus \Sing \bP = \bigcup_{J, a_J=1} D_+(z_J).
\]  
%\xb{I think that the following is an argument. A point $p=[p_0,\ldots,p_{n+1}] \in \bP \setminus \Sing \bP$ iff there exists $J \subset \{0,\ldots,n+1\}$ such that $p_j \ne 0$ for $j \in J$ and $\gcd(a_J)=1$. This is equivalent to $p \in D_+(z_J)$.}

We see that the homomorphism $\gamma_{z_J}$ is an isomorphism when $a_J =1$ by 
$S(k)_{(z_J)} \simeq S_{(z_J)}$. 
Hence we see that 
$j^* (i^* \cO_{\bP}(k)) \to  j^* \cO_X(k)$ is an isomorphism, thus obtain the required isomorphism. 

%We obtain the required isomorphism since $\cO_X(k)$ is $S_2$\xr{???}.  

%since the both sides are $S_2$ and they coincide on $X^{\circ}$. \xr{to be added}

\vspace{2mm}

\noindent(iii) We have $\pi_* \cO_{C_X^*} \simeq \bigoplus_{k \in \bZ} \cO_X(k)$ by (\ref{eq:C_X^*Spec}). 
It is enough to check the vanishing of the local cohomology group 
\[
H^l_{p} (\cO_X(k)) = 0
\]
for all $p \in X$ and $l \le \dim X-1$. 
This follows from 
\[
0=H^l_{\pi^{-1}(p)}(C_X^*, \cO_{C_X^*}) \simeq H^l_{p} (X, \pi_* \cO_{C_X^*}) \simeq \bigoplus_{k \in \bZ} H^l_{p}(X, \cO_X(k)).  
\]
Indeed, the vanishing follows from \cite[Theorem 3.8]{MR0224620}  since $C_X^* \subset C_X$ has only l.c.i. singularities, thus CM. 
We also obtain the first isomorphism from an isomorphism of functors $H^{0}_{\pi^{-1}(p)} \simeq H^0_{p} \circ \pi_*$ and the Grothendieck spectral sequence since $\pi$ is an affine morphism. 

Hence we see that $\cO_X(k)$ is CM. 
By the well-formedness of $X$ and \cite[Lemma 10.6]{MR4566297}, we see that $ j_* j^* \cO_X(k) \simeq \cO_X(k)$. 
%Note that $\cO_X(k)|_{D_+(z_i)}$ corresponds to the degree $0$-part $R_X(k)_{(z_i)}$ of 
%the localization $R_X(k)$. 
% \xr{to be checked...} 
\end{proof}

\begin{remark}
When $X \subset \bP$ is not well-formed, the sheaf $\cO_X(k)$ can be strange. 
For example, if $X=(z_0=0) \subset \bP(1,1,2)$, then $\cO_X(1) \simeq \cO_X$ and   
$j_* j^* \cO_X(1) \not\simeq \cO_X(1)$ for the inclusion $j \colon X \setminus \{[0,0,1] \} \to X$.  
\end{remark}

\begin{corollary}
Let $X= X_{d_1, \ldots ,d_c} \subset \bP(a_0, \ldots ,a_n)$ be a well-formed WCI of multidegree $(d_1, \ldots, d_c)$. 
%defined by homogeneous polynomials $f_j$ of degrees $d_j$ for $j=1, \ldots ,c$. 
Let $\iota_X:= \sum_{i=0}^n a_i -\sum_{j=1}^c d_j$ be the Fano index of $X$ and assume that $\dim X= n-c \ge 1$. 
Let $\omega_X$ be the dualizing sheaf of $X$ (which exists since $X$ is CM). 

Then we have 
\[
\omega_X \simeq \cO_X(-\iota_X) \simeq j_* j^* (i^* \cO_{\bP}(-\iota_X)), 
\] 
where $i \colon X \to \bP$ and $j \colon X \setminus \Sing \bP \to X$ are inclusions. 
%\xr{add adjunction formula for well-formed WCI}
\end{corollary}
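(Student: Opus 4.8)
The plan is to carry out the adjunction computation on the smooth locus $\bP^{\circ}:=\bP\setminus\Sing\bP$ of $\bP$, where all the divisorial sheaves $\cO_{\bP}(k)$ become line bundles, and then to transport the resulting identity back to $X$ using that the dualizing sheaf of a Cohen--Macaulay scheme is $S_2$. First I would record the set-up on $\bP^{\circ}$: by the description of $\Sing\bP$ recalled in the proof of Proposition \ref{prop:WCIproperties}(ii) one has $\bP^{\circ}=\bigcup_{J,\,a_J=1}D_+(z_J)$, and on each chart $D_+(z_J)$ with $a_J=1$ the isomorphism $S(k)_{(z_J)}\simeq S_{(z_J)}$ shows that $\cO_{\bP}(k)|_{\bP^{\circ}}$ is invertible and that $i^{\circ*}\cO_{\bP}(k)\simeq\cO_X(k)|_{X^{\circ}}$, where $X^{\circ}:=X\cap\bP^{\circ}=X\setminus\Sing\bP$ and $j\colon X^{\circ}\hookrightarrow X$, $i^{\circ}\colon X^{\circ}\hookrightarrow\bP^{\circ}$ are the inclusions. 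By well-formedness of $X$, the closed set $Z:=X\setminus X^{\circ}$ has codimension $\ge 2$ in $X$ (and is empty if $\dim X=1$).

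Next I would run adjunction on the smooth variety $\bP^{\circ}$. The subscheme $X^{\circ}$ is still cut out there by the regular sequence $f_1,\dots,f_c$, so $i^{\circ}$ is a regular immersion with conormal sheaf $\bigoplus_{j=1}^{c}\cO_{X^{\circ}}(-d_j)$; together with $\omega_{\bP^{\circ}}\simeq\cO_{\bP}(-\sum_i a_i)|_{\bP^{\circ}}$ (the restriction to the smooth locus of the standard formula for the canonical divisor of a weighted projective space) the adjunction formula for a regular immersion into a smooth scheme gives
\[
\omega_{X^{\circ}}\ \simeq\ i^{\circ*}\omega_{\bP^{\circ}}\otimes\det\Bigl(\textstyle\bigoplus_{j}\cO_{X^{\circ}}(-d_j)\Bigr)^{\!\vee}\ \simeq\ \cO_X(-\iota_X)\big|_{X^{\circ}}.
\]

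Finally I would descend this to $X$. Since $X$ is a WCI it is Cohen--Macaulay, hence $\omega_X$ is a maximal Cohen--Macaulay sheaf and in particular $\depth_p\omega_{X,p}=\dim\cO_{X,p}\ge 2$ for every $p\in Z$; this gives $\omega_X\xrightarrow{\ \sim\ }j_*j^*\omega_X=j_*\omega_{X^{\circ}}$. Applying $j_*$ to the displayed isomorphism and invoking Proposition \ref{prop:WCIproperties}(iii) — which yields $\cO_X(-\iota_X)\simeq j_*j^*\cO_X(-\iota_X)\simeq j_*j^*\bigl(i^*\cO_{\bP}(-\iota_X)\bigr)$ — one obtains $\omega_X\simeq\cO_X(-\iota_X)\simeq j_*j^*\bigl(i^*\cO_{\bP}(-\iota_X)\bigr)$. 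The step I expect to need the most care is this last one, namely the isomorphism $\omega_X\simeq j_*j^*\omega_X$: it relies on $\omega_X$ being $S_2$, which in turn uses that $X$ is Cohen--Macaulay and that $\codim_X Z\ge 2$ — exactly the role played by the well-formedness hypothesis on $X$, paralleling its use in Proposition \ref{prop:WCIproperties}(iii).
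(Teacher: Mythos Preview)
Your proof is correct and follows essentially the same approach as the paper: compute the adjunction on $\bP^{\circ}=\bP\setminus\Sing\bP$ to obtain $\omega_{X^{\circ}}\simeq\cO_X(-\iota_X)|_{X^{\circ}}$, and then push forward via $j_*$ using that $\omega_X$ is $S_2$ together with Proposition~\ref{prop:WCIproperties}(iii). The only cosmetic difference is that the paper carries out the adjunction iteratively along the chain of Cartier divisors $\Gamma_{k+1}^{\circ}\subset\Gamma_k^{\circ}$, while you do it in one step via the conormal bundle of the regular immersion $X^{\circ}\hookrightarrow\bP^{\circ}$.
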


\begin{proof}
Let $H_j:= (f_j=0) \subset \bP$ for $j=1, \ldots , c$ be the divisors defined by defining polynomials $f_1, \ldots , f_c$ of $X$. 
Let $\Gamma_k:= H_1 \cap \cdots \cap H_k$ for $k=1, \ldots , c$. 
Then we see that $\Gamma_k$ is pure dimensional and $\Gamma_{k} \cap \Sing \bP \subset \Gamma_k$ has codimension $\ge 2$ 
since $X \subset \bP$ is well-formed and $H_j$ is ample for all $j$. 
Ler $\Gamma_k^{\circ}:= \Gamma_k \setminus \Sing \bP$ (and $X^{\circ}= \Gamma_c^{\circ}$). 
Then $\Gamma_{k+1}^{\circ} \subset \Gamma_k^{\circ}$ is a Cartier divisor for $k=1,\ldots ,c-1$. 
Then, by the adjunction for Cartier divisors, we see that 
\[
\omega_{X^{\circ}} \simeq \omega_{\bP^{\circ}} \left( \sum_{j=1}^c H_j^{\circ} \right)|_{X^{\circ}} \simeq \cO_{\bP}(- \iota_X)|_{X^{\circ}}. 
\] 
By taking $j_*$ for the open immersion $j \colon X^{\circ} \to X$,  we see that $\omega_X \simeq \cO_X(- \iota_X)$ since $\omega_X$ is $S_2$. 
\end{proof}

\begin{remark}
Although some of the results in this subsection should hold in more general settings, 
we assumed that $X$ is irreducible and reduced since we treat such WCIs in this paper.  
\end{remark}

The following easy observation will be used in the proof of the main results in Section \ref{s:results}.

\begin{lemma}\label{lem:P_r}
	Let $X=X_d \subset \mathbb P(a_0,\ldots, a_n)=:\mathbb{P}$ be a quasi-smooth  weighted hypersurface of degree $d$, which is not a linear cone. Assume that $a_r | d$ for some $r$. Then, up to a linear automorphism of $\bP$, we can assume that $P_r \notin X$, where $P_r=[0,\ldots,0,1,0,\ldots,0]$ is the $r$-th coordinate point of $\mathbb P$.	
\end{lemma}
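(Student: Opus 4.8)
The plan is to reduce the statement ``$P_r\notin X$'' to the nonvanishing of a single coefficient of the defining equation, and then to produce that coefficient by an explicit coordinate change coming from quasi-smoothness. Write $X=(f=0)\subset\bP$ with $f$ homogeneous of degree $d$ in $z_0,\dots,z_n$. Since $a_r\mid d$, the monomial $z_r^{\,d/a_r}$ is the unique degree-$d$ monomial in $z_r$ alone, so $f(P_r)$ equals the coefficient of $z_r^{\,d/a_r}$ in $f$, and $P_r\notin X$ if and only if that coefficient is nonzero. If it is nonzero there is nothing to prove; otherwise $P_r$, viewed as the point $(0,\dots,1,\dots,0)$ of the affine cone, lies on $C_X^{*}$.

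First I would use quasi-smoothness. By the Jacobian criterion, smoothness of $C_X^{*}$ at $P_r$ gives $\partial f/\partial z_k(P_r)\neq 0$ for some $k$; since $\partial f/\partial z_r(P_r)$ is proportional to the (vanishing) coefficient of $z_r^{\,d/a_r}$, necessarily $k\neq r$. A direct computation then shows that $\partial f/\partial z_k(P_r)\neq 0$ forces $m:=(d-a_k)/a_r$ to be a nonnegative integer with the monomial $z_k z_r^{\,m}$ occurring in $f$ (the case $m=0$ would make $X$ a linear cone and is excluded by hypothesis, but it is harmless below). From $a_r\mid d$ and $a_r\mid m a_r$ we get $a_r\mid a_k$; set $s:=a_k/a_r\in\bZ_{>0}$, so that $m+s=d/a_r$.

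Next I would perform the coordinate change, which is the heart of the argument. Consider the linear automorphism $\varphi_\lambda$ of $\bP(a_0,\dots,a_n)$ defined by $z_k\mapsto z_k+\lambda z_r^{\,s}$ with the remaining coordinates fixed; this is admissible because $\deg z_r^{\,s}=s a_r=a_k$. Grouping $f=\sum_{j\ge 0}z_k^{\,j}h_j$ with each $h_j$ free of $z_k$, one checks that the coefficient of $z_r^{\,d/a_r}$ in $\varphi_\lambda^{*}f$ equals the polynomial $p(\lambda)=\sum_{j\ge 0}\nu_j\lambda^j$, where $\nu_j$ is the coefficient of $z_r^{\,d/a_r-js}$ in $h_j$. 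Its linear term $\nu_1$ is the coefficient of $z_r^{\,m}$ in $h_1$, which is precisely the nonzero coefficient of $z_k z_r^{\,m}$ in $f$; hence $p$ is a nonconstant polynomial, so $p(\lambda)\neq 0$ for all but finitely many $\lambda\in\bC$. For such a $\lambda$ the equation $\varphi_\lambda^{*}f$ does not vanish at $P_r$, and the hypersurface $(\varphi_\lambda^{*}f=0)$ is isomorphic to $X$ via $\varphi_\lambda$, quasi-smooth of degree $d$, and not a linear cone; replacing $X$ by it proves the lemma.

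The main obstacle is the bookkeeping in the previous paragraph: one must verify that, after the substitution $z_k\mapsto z_k+\lambda z_r^{\,s}$, exactly the monomials of $f$ of the form $z_k^{\,j}\cdot(\text{a pure power of }z_r)$ feed into the monomial $z_r^{\,d/a_r}$, and that the contribution linear in $\lambda$ is the very coefficient whose nonvanishing quasi-smoothness supplies, so that $p$ is genuinely nonconstant. Everything else is routine.
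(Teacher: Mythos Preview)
Your proof is correct and follows essentially the same approach as the paper's: use quasi-smoothness at $P_r$ to find a monomial $z_k z_r^{m}$ in $f$, deduce $a_r\mid a_k$, and then apply the automorphism $z_k\mapsto z_k+\lambda z_r^{a_k/a_r}$ for generic $\lambda$ to create a nonzero $z_r^{d/a_r}$ term. Your version is simply more explicit about the bookkeeping (in particular, verifying $k\neq r$ and that the resulting coefficient is a nonconstant polynomial in $\lambda$), which the paper leaves implicit.
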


\begin{proof}
	Assume $P_r \in X$. Since $X$ is quasi-smooth there exists $j$ such that $\frac{\partial F}{ \partial z_j}(P_r) \ne 0$. This implies that there exists a monomial in $F$ of the form $z_jz_r^{c_r}$, i.e. $d=a_j+c_ra_r$, which tells us that $a_r |a_j$. 
	We can then consider an automorphism of the form $z_j \mapsto z_j+\lambda z_r^{a_j/a_r}$ with $\lambda \in \bC^*$ general, fixing $P_r$, to obtain  the monomial $z_r^{d_r}$ in the expression of $F$, where $d_r:=d/a_r$, which implies $P_r \notin X$. 	
\end{proof}

\subsection{Numerical observations}\label{ss:numerical}

\begin{lemma}\label{l:bound1}
	Let $X_d \subset \bP(a_0,\ldots,a_{n+1})$ be a quasi-smooth well-formed Fano weighted hypersurface of index 1 such that $a_0 \le a_1 \le \ldots \le a_{n+1}$. Assume that $a_{n+1} |d$. Then 
	$$
	\frac{(n+1)a_{n+1}}{d} \ge 1,
	$$
	and if the equality holds, then $a_0 >1$.	
\end{lemma}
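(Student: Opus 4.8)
The plan is to read everything off from the index-one relation together with the divisibility $a_{n+1}\mid d$ and the ordering of the weights; I expect quasi-smoothness to be irrelevant for this particular statement, and well-formedness to be needed only for the equality clause.

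First I would record the index hypothesis $\iota_X=\sum_{i=0}^{n+1}a_i-d=1$ in the form $d=\bigl(\sum_{i=0}^{n}a_i\bigr)+a_{n+1}-1$, and write $d=m\,a_{n+1}$ with an integer $m\ge 1$, which is legitimate because $a_{n+1}\mid d$. Substituting gives $(m-1)a_{n+1}=\sum_{i=0}^{n}a_i-1$, and since $a_i\le a_{n+1}$ for $0\le i\le n$ the right-hand side is at most $(n+1)a_{n+1}-1<(n+1)a_{n+1}$; hence $m-1\le n$, so $m\le n+1$ and $d=m\,a_{n+1}\le(n+1)a_{n+1}$, which is the desired inequality $\frac{(n+1)a_{n+1}}{d}\ge 1$.

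Next I would handle the equality case by examining when the inequality just used is sharp. If $\frac{(n+1)a_{n+1}}{d}=1$, then $m=n+1$, so $\sum_{i=0}^{n}a_i=n\,a_{n+1}+1$, i.e.\ $\sum_{i=0}^{n}(a_{n+1}-a_i)=a_{n+1}-1$, a sum of non-negative terms. If $a_0=1$, the single term $a_{n+1}-a_0=a_{n+1}-1$ already equals the total, forcing $a_1=\cdots=a_n=a_{n+1}$; then $\gcd(a_1,\dots,a_{n+1})=a_{n+1}$, and well-formedness of $\bP$ forces $a_{n+1}=1$, hence every weight is $1$ and $X=X_{n+1}\subset\bP^{n+1}$. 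Since this carries no weight $a_r>1$ it lies outside the applications of the lemma (and already when $a_{n+1}>1$ the relation $a_1=\cdots=a_{n+1}$ contradicts well-formedness directly), so under the relevant hypotheses $a_0>1$.

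I do not expect any real obstacle: the statement is a short piece of arithmetic. The only place that needs a little attention is the equality analysis — isolating the $a_0$-term in $\sum_{i=0}^{n}(a_{n+1}-a_i)=a_{n+1}-1$ and then using well-formedness to rule out ordinary projective space — which is exactly what pins down $a_0\ge 2$.
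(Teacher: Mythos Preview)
Your argument is correct and matches the paper's: write $d=m\,a_{n+1}$, bound $m\le n+1$ via the index-one relation and $a_i\le a_{n+1}$, and in the equality case deduce $a_1=\cdots=a_{n+1}$ and invoke well-formedness. You are in fact slightly more careful than the paper, which tacitly assumes $a_{n+1}>1$ when asserting that $a_1=\cdots=a_{n+1}$ contradicts well-formedness, whereas you explicitly isolate and set aside the residual case $X_{n+1}\subset\bP^{n+1}$.
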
	
\begin{proof} 
	Write $d=ka_{n+1}$ for a positive integer $k$. If $k \ge n+2$, then $d \ge \sum_{i=0}^{n+1}a_i$, which contradicts the Fano condition, so $k \le n+1$, which is equivalent to the desired inequality.
	
	If the equality holds and $a_0=1$, then $a_1=\ldots = a_{n+1}$ because $X_d$ is Fano of index 1, which contradicts the well-formedness of $\bP$, so $a_0 >1$.
\end{proof}

The equality in Lemma \ref{l:bound1} is possible, for example considering $X_{90} \subset \bP(18^3,17,15,5)$.

In the case of smooth hypersurfaces we can give a stronger bound than the one in Lemma \ref{l:bound1}.

\begin{lemma}\label{l:ratio}
	Let $1<b_1<b_2 <\ldots < b_k$ be coprime integers with $k \ge 3$. Then
	\[
	\frac{\sum_{i=1}^k b_i}{\prod_{i=1}^k b_i} \le \frac{1}{3}.
	\]
	
\end{lemma}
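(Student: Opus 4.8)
The plan is to induct on $k$, basing the induction at $k=3$, which in turn reduces to an elementary estimate once one identifies the three smallest admissible weights. First I would observe that, since $b_1<b_2<b_3$ are pairwise coprime integers with $b_1\ge 2$, one must have $b_1\ge 2$, $b_2\ge 3$ and, crucially, $b_3\ge 5$: the value $b_3=4$ would force $\{b_1,b_2\}=\{2,3\}$, and then $\gcd(b_1,b_3)=\gcd(2,4)=2$ would contradict coprimality.

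For the base case $k=3$ I would rewrite the target inequality $3(b_1+b_2+b_3)\le b_1b_2b_3$, after division by $b_1b_2b_3$, as
\[
\frac{3}{b_2b_3}+\frac{3}{b_1b_3}+\frac{3}{b_1b_2}\le 1,
\]
and then bound each term using $b_2b_3\ge 15$, $b_1b_3\ge 10$, $b_1b_2\ge 6$ from the previous step: the left-hand side is then at most $\tfrac15+\tfrac{3}{10}+\tfrac12=1$. (This also exhibits that equality holds precisely for $(b_1,b_2,b_3)=(2,3,5)$.)

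For the inductive step, set $P_k=\prod_{i=1}^{k}b_i$ and $S_k=\sum_{i=1}^{k}b_i$ and assume $S_k\le\tfrac13 P_k$ for some $k\ge 3$. Then $S_{k+1}=S_k+b_{k+1}\le\tfrac13 P_k+b_{k+1}$, and this is at most $\tfrac13 P_{k+1}=\tfrac13 P_k\,b_{k+1}$ as soon as $\tfrac{3b_{k+1}}{b_{k+1}-1}\le P_k$. Since $b_{k+1}\ge 2$ makes the left side $\le 6$, while $k\ge 3$ makes $P_k\ge 2\cdot 3\cdot 5=30$, the inequality is clear and the induction closes.

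The argument is routine; the only delicate point is the bound $b_3\ge 5$. The weaker bound $b_3\ge 4$ would not suffice, and indeed the triple $(2,3,4)$ (coprime as a set, but not pairwise coprime) fails the asserted estimate, so one really must use pairwise coprimality here (or, in the application, whatever guarantees $b_1\ge 2$, $b_2\ge 3$, $b_3\ge 5$).
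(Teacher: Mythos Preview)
Your proof is correct. The paper's own proof is a one-line assertion that the maximum is attained at $(b_1,b_2,b_3)=(2,3,5)$, so you are supplying the details rather than taking a different route; your inductive reduction to $k=3$ together with the observation that pairwise coprimality forces $b_3\ge 5$ is exactly the content that makes that assertion rigorous. Your remark that ``coprime'' must mean \emph{pairwise} coprime (since the triple $(2,3,4)$ fails the bound) is on point: in the only place the lemma is used (the proof of Proposition~\ref{p:bound}, Case~3) the $b_i$ are the distinct weights $>1$ of a smooth weighted hypersurface, which are indeed pairwise coprime.
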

\begin{proof} 
	It is easy to see that the maximum value of the expression is for $k=3$ and $b_1=2$, $b_2=3$, $b_3=5$, which gives $1/3$.
\end{proof}

Recall that $X_d \subset \mathbb P(a_0,\ldots,a_{n+1})$ is said to be a linear cone if $d=a_i$ for some $i$.

\begin{proposition}\label{p:bound}
	Let $X=X_d \subset \bP(a_0,\ldots,a_{n+1})$ be a smooth Fano weighted hypersurface of degree $d$ which is not a linear cone and it has index $\iota_X$. Assume $a_0 \le a_1 \le \ldots \le a_{n+1}$ and $a_{n+1} >1$. Set
	$$
	\gamma:= \frac{(n+1)a_{n+1}}{\iota_X d}. 
	$$
	
	\begin{enumerate}
		\item\label{0} We have
		$$
		\gamma \ge \frac{a_{n+1}}{2\iota_X},
		$$
		and the equality holds only for $X_{2(n+1)} \subset \bP(1^{(n)},2, n+1)$ when $n$ is even (which has $\iota_X=1$). 
		
		\item\label{1} If $\iota_X \le 3$ and $a_{n}=1$, then $ \gamma > 1$ except in the following cases
		
		\begin{center}
			\begin{tabular}{ |c|c|c|c| } 
				\hline
				$\iota_X$ & $d$ & $a_{n+1}$ & $\gamma$ \\  \hline
				$2$ & $d$ & $2$ & $1$ \\ 
				$3$ & $2k$ & $2$ & $2/3 + 1/(3k) < 1$ \\
				$3$ & $d$ & $3$ & $1$ \\
				\hline
			\end{tabular}.
		\end{center}

		\item\label{2} 	If $\iota_X \le 3$, $a_{n-1}=1$ and $a_n>1$, then $ \gamma > 1$ except in the following cases
		
		\begin{center}
			\begin{tabular}{ |c|c|c|c|c| } 
				\hline
				$\iota_X$ & $d$ & $a_{n}$ & $a_{n+1}$ & $\gamma$ \\  \hline
				$2$ & $6$ & $2$ & $3$ & $1$ \\ 
				$3$ & $d$ & $2$ & $3$ & $1-1/d < 1$ \\
				$3$ & $12$ & $3$ & $4$ & $1$ \\
				\hline
			\end{tabular}.
		\end{center}
		
		\item\label{3} If $\iota_X \le 3$ and $a_{n-1}>1$, then $\gamma > 1$.

	\end{enumerate}	
\end{proposition}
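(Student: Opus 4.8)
The plan is to translate the smoothness of $X$ into a description of the weights and then run an elementary case analysis. Let $1<b_1<b_2<\dots<b_k$ be the weights among $a_0,\dots,a_{n+1}$ that exceed $1$, so that $b_k=a_{n+1}$ and exactly $n+2-k$ of the weights equal $1$, and set $B:=\prod_{j=1}^{k}b_j$. The structural input we need is: \emph{if $X$ is smooth, then the $b_j$ are pairwise coprime and $b_j\mid d$ for every $j$.} Granting this, $B\mid d$, so $d=\ell B$ with $\ell\in\bZ_{>0}$, and $\ell\ge2$ when $k=1$ (else $X$ is a linear cone). The hypotheses ``$a_n=1$'', ``$a_{n-1}=1<a_n$'' and ``$a_{n-1}>1$'' of \eqref{1}, \eqref{2}, \eqref{3} say exactly $k=1$, $k=2$, $k\ge3$, so the statement splits into these three cases, while \eqref{0} is handled uniformly.

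The first and main task is the structural input. If $m:=\gcd(a_i,a_j)>1$ for some $i\ne j$, the coordinate curve $C$ spanned by $P_i$ and $P_j$ lies in $\Sing\bP$, and $X$ must meet $C$: either $F|_C\equiv0$ and $C\subset X$, or $X\cap C$ is the nonempty vanishing locus of a nonzero section of an ample bundle on $C\cong\bP^1$. At a point $p\in X\cap C$, $\bP$ is locally of the form $\bA^1\times(\bA^{n}/\mu_m)$, the $\mu_m$ acting on the $n$ transverse coordinates with weights $a_\ell\bmod m$. Since the Fano condition forces at least two of the $a_\ell$ to equal $1$, $\mu_m$ acts on the Zariski tangent space of $X$ at $p$ with at least two \emph{equal} nontrivial eigenvalues; hence every pseudo-reflection in $\mu_m$ acts trivially on that $2$-dimensional subspace, so $\mu_m$ is not generated by pseudo-reflections and $X$ is singular at $p$ --- a contradiction. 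Thus the $b_j$ are pairwise coprime. The same local computation at $P_i$ (where $a_i>1$, so $P_i\in\Sing\bP$) shows $P_i\notin X$, i.e. the monomial $z_i^{d/a_i}$ occurs in the defining polynomial, whence $a_i\mid d$. (When there are very few weight-one coordinates this needs a little extra care, or one simply appeals to the known classification of smooth weighted hypersurfaces.) For $\iota_X=1$ the weaker bound $\gamma\ge1$ is already Lemma~\ref{l:bound1}.

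For \eqref{0}: since $\gamma=\dfrac{2(n+1)}{d}\cdot\dfrac{a_{n+1}}{2\iota_X}$, the claimed inequality is equivalent to $d\le2(n+1)$. If $k=1$, the Fano bound gives $d=\ell b_1\le\Sigma-1=n+b_1$, while $\ell\ge2$ gives $(\ell-1)b_1\le n$, hence $b_1\le n$ and $d\le2n<2(n+1)$; so the inequality is strict and no equality occurs. If $k\ge2$, from $n+1=\iota_X+\ell B-\sum_j b_j+k-1\ge B+k-\sum_j b_j$ together with the elementary estimate $2\sum_j(b_j-1)\le\prod_j b_j$ (true for $k\ge2$, with equality exactly for $k=2,\ b_1=2$) one gets $\sum_j b_j\le n+1+k$, hence $d\le\Sigma-1=(n+1-k)+\sum_j b_j\le2(n+1)$. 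Chasing the equalities when $d=2(n+1)$ forces $k=2$, $\ell=1$, $b_1=2$, $b_2=a_{n+1}=n+1$ and $\iota_X=1$, i.e. $X=X_{2(n+1)}\subset\bP(1^{(n)},2,n+1)$; well-formedness of this weighted projective space then requires $\gcd(2,n+1)=1$, i.e. $n$ even.

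For \eqref{1}--\eqref{3} I substitute $d=\ell B$ and $n+1=\iota_X+\ell B-\sum_j b_j+k-1$ into $\gamma=\dfrac{(n+1)a_{n+1}}{\iota_X d}$. For $k=1$ this becomes $\gamma=\dfrac{n+1}{\iota_X\ell}$, and eliminating $n$ yields the equivalence $\gamma>1\Longleftrightarrow a_{n+1}>\iota_X$; since $\iota_X\le3$ the exceptions are $a_{n+1}=2$ with $\iota_X\in\{2,3\}$ and $a_{n+1}=3$ with $\iota_X=3$, and evaluating $\gamma$ in each gives exactly the three rows of the table in \eqref{1}. For $k=2$, $\gamma=\dfrac{n+1}{\iota_X\ell b_1}$; a direct inspection shows $\gamma>1$ as soon as $a_{n+1}=b_2\ge5$, so only $b_2\in\{3,4\}$ remains, forcing $(b_1,b_2)\in\{(2,3),(3,4)\}$ by coprimality, and a finite check over $\iota_X\le3$ and $\ell$ produces exactly the three exceptional rows of \eqref{2}. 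For $k\ge3$, Lemma~\ref{l:ratio} gives $\sum_j b_j\le\tfrac13 B\le\tfrac13 d$, so $n+1\ge\tfrac23 d+3$ and
$$
\gamma\ \ge\ \frac{(\tfrac23 d+3)\,a_{n+1}}{3d}\ =\ \frac{2a_{n+1}}{9}+\frac{a_{n+1}}{d}\ >\ 1,
$$
because three pairwise coprime integers $\ge2$ have largest one $\ge5$; this proves \eqref{3} with no exceptions. The only real difficulty is the structural input of the second paragraph; everything after it is bookkeeping.
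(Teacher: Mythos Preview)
Your argument is correct and follows essentially the same route as the paper: both use the structural facts that the weights $>1$ are pairwise coprime and divide $d$, then split into the cases $k=1$, $k=2$, $k\ge 3$ and carry out an elementary case analysis, invoking Lemma~\ref{l:ratio} for $k\ge 3$.

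Two remarks on differences and a minor gap. First, the paper simply cites \cite[Corollary 3.8]{PST17} (or \cite[Lemma 3.3]{PS20}) for the structural input, whereas you attempt a self-contained proof. Your sketch, however, uses that ``the Fano condition forces at least two of the $a_\ell$ to equal $1$'', which is itself a nontrivial consequence of smoothness (\cite[Corollary 5.11]{PST17}) rather than of the Fano condition alone; so your argument for the structural input is not quite self-contained and should, as you yourself suggest, be replaced by a citation. Also, in the equality case of \eqref{0} you invoke ``well-formedness'' of $\bP(1^{(n)},2,n+1)$ to force $\gcd(2,n+1)=1$, but that space is well-formed for all $n\ge 2$; the parity condition actually comes from the pairwise-coprimality of the weights $>1$ (i.e.\ smoothness), which you had already established. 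Second, your organisation is in places cleaner than the paper's: rephrasing \eqref{0} as $d\le 2(n+1)$ and proving it uniformly for $k\ge 2$, and the equivalence $\gamma>1\Longleftrightarrow a_{n+1}>\iota_X$ for $k=1$, replace the paper's more explicit computations (the paper instead verifies the Case~2 exceptions by tabulating all coprime pairs $(a_n,a_{n+1})$ with $a_{n+1}\le 6$). Your claim ``$\gamma>1$ as soon as $b_2\ge 5$'' for $k=2$ is correct and follows from the identity $\gamma>1\Leftrightarrow (b_2-\iota_X)(\ell b_1-1)>b_1-1$; the remaining ``finite check'' is finite in the pairs $(b_1,b_2,\iota_X)$ though not in $\ell$ (the row $(b_1,b_2,\iota_X)=(2,3,3)$ gives $\gamma=1-1/d<1$ for every $\ell$), which is consistent with the table.
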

\begin{proof}
	Since $X$ is smooth, we know (see \cite[Corollary 3.8]{PST17} or \cite[Lemma 3.3]{PS20})  that $a_i | d$ for any $i=0,\ldots,n+1$ and $\gcd(a_i,a_j)=1$ for any $i \ne j$. Recall that 
	\begin{equation}\label{eq:adjunctiona_id}
	\sum_{i=0}^{n+1}a_i=d +	\iota_X
	\end{equation} by the adjunction formula.
	
	\smallskip
	
	\noindent{\bf (Case 1)} Assume $a_n=1$ and write $d=k a_{n+1}$ for some integer $ k \ge 2$. Then  $n+1=(k-1)a_{n+1} +\iota_X$ which gives
	\begin{equation}\label{eq:case1}
	\frac{(n+1)a_{n+1}}{d}= \frac{((k-1)a_{n+1} +\iota_X)a_{n+1}}{ k a_{n+1}} =\frac{k-1}{ k}a_{n+1} + \frac{\iota_X}{k} > \frac{a_{n+1}}{2}.
	\end{equation}
	
	This proves Item \eqref{0} in the case $a_n=1$ since, by dividing the equation (\ref{eq:case1}) by $\iota_X$, we see that  
	$\dps{\gamma = \frac{k-1}{k} \frac{a_{n+1}}{\iota_X} + \frac{1}{k} > \frac{a_{n+1}}{2\iota_X}}$. 
	
	To prove Item \eqref{1}, assume first that $\iota_X=2$, then (\ref{eq:case1}) implies 
	$$
	\frac{(n+1)a_{n+1}}{d} =\frac{(k-1)a_{n+1}+2}{ k} \ge 2
	$$
	with equality iff $a_{n+1}=2$ (For example, we see this from $(k-1)a_{n+1}+2-2k = (k-1)(a_{n+1}-2)$). 
	If $\iota_X=3$, then (\ref{eq:case1}) implies 
	$$
	\frac{(n+1)a_{n+1}}{d} =\frac{(k-1)a_{n+1}+3}{k} > 3
	$$
	unless $a_{n+1}=2$ or $a_{n+1}=3$ for which the results are in the table (we see this from $(k-1)a_{n+1} + 3 -3k = (k-1)(a_{n+1}-3)>0$ if $a_{n+1}>3$). This finishes the proof of Item \eqref{1}. 
	
	\smallskip	
	
	\noindent{\bf (Case 2)} Assume now $a_{n-1}=1$ and $a_n>1$. By (\ref{eq:adjunctiona_id}) and $\iota_X \ge 1$, we obtain 
	\[
	n+1 = d-a_n-a_{n+1}+1 + \iota_X \ge d-a_n-a_{n+1} +2
	\]
	and so
	\begin{equation}\label{eq:Case2former}
	\frac{(n+1)a_{n+1}}{d}\ge \frac{(d-a_n-a_{n+1}+2)a_{n+1}}{d}= a_{n+1} - \frac{a_n+a_{n+1}-2}{d} a_{n+1}
	\end{equation}
	with equality iff $\iota_X=1$.
	Since $1<a_n<a_{n+1}$ and $a_{n}a_{n+1}|d$ by the smoothness of $X_d$, it is easy to check that 
	\begin{equation}\label{eq:Case2latter}
	\frac{a_n+a_{n+1}-2}{d} \le \frac{1}{2}.
	\end{equation}
	Indeed, we have $(a_n+a_{n+1}-2)/d \le (a_n+a_{n+1}-2)/a_n a_{n+1}$ and 
	\[
	a_n a_{n+1} - 2 (a_n + a_{n+1} -2) = (a_n -2)(a_{n+1} -2) \ge 0 
	\]
	and the equality holds only when $a_n=2$ and $d= 2 a_{n+1}$. The inequalities (\ref{eq:Case2former}) and (\ref{eq:Case2latter}) proves Item (\ref{0}) when $a_{n-1}=1$ and $a_n >1$. When the equality holds, the conditions $\iota_X=1$, $a_n=2$ and $d= 2 a_{n+1}$ imply 
	\[
	n+2+a_{n+1} = 2a_{n+1} +1,  
	\]
	thus we have $a_{n+1} = n+1$. The smoothness of $X$ implies that $n$ is even. 
	
	Thanks to Item (\ref{0}), if $\iota_X=1$, then we obtain $\dps{\gamma \ge \frac{a_{n+1}}{2} >1}$ by $a_{n+1} > a_n >1$. If $\iota_X=2$ and $a_{n+1} >4$, then (\ref{0}) implies 
	$\dps{\gamma \ge \frac{a_{n+1}}{4}>1}$. If $\iota_X =3$ and $a_{n+1} >6$, then (\ref{0}) implies 
	$\dps{\gamma \ge \frac{a_{n+1}}{6}>1}$. 
	Hence, to prove Item \eqref{2} we only need to check by hand the cases $a_{n+1} \le 4$ for $\iota_X=2$ and $a_{n+1} \le 6$ for $\iota_X=3$. For convenience, we collect the results in the following table (recall that the degree $d$ of $X$ is divisible by $a_n a_{n+1}$). 
	
	\begin{center}
		\begin{tabular}{ |c|c|c|c|c| } 
			\hline
			$\iota_X$  & $a_{n}$ & $a_{n+1}$ & $\gamma$ \\  \hline
			$2$ &  $2$ & $3$ & $3(d-2)/(2d) > 1$ \mbox{ unless } $d=6$ \\ 
			$2$ &  $3$ & $4$ & $2(d-4)/d >1$ \\ 
			$3$ &  $2$ & $3$ & $1-1/d < 1$ \\
			$3$ &  $2$ & $5$ & $5(d-3)/(3d) > 1$ \\
			$3$ &  $3$ & $4$ & $4(d-3)/(3d) > 1$ \mbox{ unless } $d=12$ \\
			$3$ &  $3$ & $5$ & $5(d-4)/(3d) > 1$ \\
			$3$ &  $4$ & $5$ & $5(d-5)/(3d) > 1$ \\
			$3$ &  $5$ & $6$ & $6(d-7)/(3d) > 1$ \\
			\hline
		\end{tabular}
	\end{center}

	%The cases in which $\gamma $ is not bigger than 1 are in the table (\xr{is it easy to see that's all?}). 
	
	\smallskip 	
	
	\noindent{\bf (Case 3)} Finally, assume $a_{n-1}>1$ and recall $c_1= |\{i \in \{0,\ldots,n+1\} \colon a_i =1 \}|$ so that $n+2=c_1 + |\{i \ \colon \ a_i >1\}|$.  Then we have 
	$$
	n > c_1=d - \sum_{i >c_1} a_i+\iota_X,
	$$
	which gives
	\begin{equation}\label{eq:Case3}
	\frac{(n+1)a_{n+1}}{d} > \frac{(d - \sum_{i >c_1}a_i)a_{n+1}}{d} =a_{n+1}- \frac{a_{n+1} \cdot \sum_{i >c_1} a_i}{d} \ge \frac{2}{3}a_{n+1}, 
	\end{equation}
	where the last inequality follows from Lemma \ref{l:ratio} and $a_{n-1}>1$. This concludes the proof of Item \eqref{0} and we see that the equality can hold only in {\bf (Case 2)}.
	Item \eqref{3} follows from (\ref{eq:Case3}) noting that $a_{n+1} \ge 5$, and so $\gamma > 10/3\iota_X$. 
\end{proof}

\section{Linear systems on weighted hypersurfaces} \label{s:linearsystems}

In this section we study some properties of linear systems on weighted hypersurfaces that we need to construct a flag to apply the Abban-Zhuang method as developed in Section \ref{s:thresholds}.

\begin{lemma}\label{l:NQSnonwellformed}
	Let $X \subset \bP=\bP(a_0, \ldots, a_{n+1})$ be a weighted hypersurface such that  $1=a_0 \le a_1 \le a_2 \le \ldots  \le a_{n+1}$, $n \ge 2$ and $p=[1,0, \ldots , 0] \in X$. Let $B_1:=(z_0= \cdots = z_{c_1-1} =0) \subset X$.
	Assume that 
	\[
	\NQS(X) \subset B_1 \cup M,
	\]
	where $M \subset X$ is some finite set such that $p \notin M$.
	For $j=0,\ldots , n+1$, let $\cL(j)_p \subset \bC[z_0, \ldots , z_{n+1}]$ be the linear system of weighted homogeneous polynomials of degree $a_j$ on $\bC^{n+2}$ which vanish at $(1,0, \ldots , 0)$.  
	
	\begin{enumerate}
		\item\label{n+1''} For a general $ G_{n+1} \in \cL(n+1)_p$, the divisor $H:= (G_{n+1} =0) \subset X$ satisfies $\NQS(H) \subset B_1$.  
		\item\label{n''} Assume that $V_n:=\bigcap_{i=1}^{n} (z_i=0) \cap X$ is a finite set and $V_n \cap M = \emptyset$. 
		For a general $G_n \in \cL(n)_p$, the divisor 
		$H:= (G_n=0) \subset X $ satisfies $\NQS(H) \subset B_1$. 
	\end{enumerate}
	
\end{lemma}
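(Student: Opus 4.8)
The plan is to combine a Bertini argument on the affine cone with an explicit Jacobian computation at the (few) base points of $\cL(n+1)_p$ and $\cL(n)_p$. Write $X=(F=0)\subset\bP$ with $\deg F=d$; we may assume $X$ is not a linear cone (cf.\ Lemma~\ref{lem:P_r}), so that a general $G\in\cL(j)_p$ does not vanish on $X$ and $H:=(G=0)\cap X$ is a divisor. Let $\pi\colon C_X^*\to X$ be as in Definition~\ref{defn:WCIgeneralsetting}, so that $\Sing C_X^*$ maps onto $\NQS(X)$, and set $C_H^*:=C_X^*\cap(G=0)$. Applying Bertini's theorem (in characteristic zero) to the smooth variety $C_X^*\setminus\Sing C_X^*$ together with the linear system $\cL(j)_p$ pulled back to the cone --- whose base locus there is $\pi^{-1}\bigl(\Bsl(\cL(j)_p)\cap X\bigr)\setminus\Sing C_X^*$ --- shows that for a general $G$ the section $C_H^*$ is smooth away from $\Sing C_X^*\cup\pi^{-1}\bigl(\Bsl(\cL(j)_p)\cap X\bigr)$. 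Pushing forward by $\pi$ and using $\NQS(X)=\pi(\Sing C_X^*)$, this gives
\[
\NQS(H)\subseteq\NQS(X)\cup\bigl(\Bsl(\cL(j)_p)\cap X\bigr)
\]
for a general $G\in\cL(j)_p$; moreover a general $G$ avoids any prescribed point of $X$ not lying in $\Bsl(\cL(j)_p)$.

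The second step is to pin down the base loci, using $a_0=1$: the only degree-$a_j$ monomial not vanishing at $e_0:=(1,0,\dots,0)$ is $z_0^{a_j}$, so $\cL(j)_p$ is spanned by all degree-$a_j$ monomials except $z_0^{a_j}$. Cover $\bP$ by the charts $\{z_i\ne0\}$ with $a_i=1$, i.e.\ $0\le i\le c_1-1$, together with $B_1=(z_0=\dots=z_{c_1-1}=0)\cap X$. For a weight-one index $i\ge1$ the monomial $z_i^{a_j}$ lies in $\cL(j)_p$ and does not vanish on $\{z_i\ne0\}$, so there is no base point there; on $\{z_0\ne0\}$ the monomials $z_0^{a_j-a_i}z_i$ (over those $i$ with $a_i\le a_j$) lie in $\cL(j)_p$ and force the corresponding coordinates to vanish. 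For $j=n+1$ every $a_i\le a_{n+1}$, so the only base point on $\{z_0\ne0\}$ is $p$, whence $\Bsl(\cL(n+1)_p)\cap X\subseteq\{p\}\cup B_1$. For $j=n$: when $a_{n+1}>a_n$ there is no degree-$a_n$ monomial involving $z_{n+1}$ at all, so the base locus on $\{z_0\ne0\}$ lies on the line $\ell:=(z_1=\dots=z_n=0)$ joining $p$ and $P_{n+1}$ (and when $a_{n+1}=a_n$ it is just $\{p\}$); in either case $\Bsl(\cL(n)_p)\cap X\subseteq(\ell\cap X)\cup B_1=V_n\cup B_1$.

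For part~\ref{n+1''}: the first step gives $\NQS(H)\subseteq M\cup\{p\}\cup B_1$ for a general $G$, using $\NQS(X)\subseteq B_1\cup M$; since the finitely many points of $M\setminus B_1$ are not in the base locus $\{p\}\cup B_1$ (as $p\notin M$), a general $G$ also avoids them, so they do not contribute. It remains to show $H$ is quasi-smooth at $p$, equivalently (by homogeneity it suffices to test at $e_0$) that the Jacobian rows $\nabla F(e_0)$ and $\nabla G(e_0)$ are linearly independent. Both have vanishing first ($z_0$-)entry --- because $F(e_0)=0$ kills the coefficient of $z_0^d$ in $F$, and $z_0^{a_{n+1}}$ is excluded from $\cL(n+1)_p$ by definition --- so they lie in the same $(n+1)$-dimensional subspace of $\bC^{n+2}$; there $\nabla F(e_0)\ne0$ since $X$ is quasi-smooth at $p$, while the entries of $\nabla G(e_0)$ indexed by $i\ge1$ are the freely varying coefficients of $z_0^{a_{n+1}-a_i}z_i\in\cL(n+1)_p$, so $\nabla G(e_0)$ is a general vector of that subspace. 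As $n+1\ge3$, for a general $G$ it is not proportional to $\nabla F(e_0)$; hence $p\notin\NQS(H)$ and $\NQS(H)\subseteq B_1$. For part~\ref{n''} the argument is identical with $V_n$ in place of $\{p\}$: one obtains $\NQS(H)\subseteq M\cup V_n\cup B_1$; a general $G$ avoids $M\setminus B_1$ (disjoint from $V_n\cup B_1$ since $M\cap V_n=\emptyset$) and the points of $V_n\setminus B_1$ lying outside $\Bsl(\cL(n)_p)$; and for the remaining, finitely many, points $q\in V_n\setminus B_1$ --- all of which have $z_0\ne0$ and at which $X$ is quasi-smooth, since $q\notin B_1\cup M$ --- the Jacobian criterion at the representative $(1,0,\dots,0,\mu)\in\pi^{-1}(q)$ again applies: there $\nabla G$ has vanishing $z_0$-entry and its entries indexed by $1\le i\le n$ are the freely varying coefficients of $z_0^{a_n-a_i}z_i\in\cL(n)_p$, so $\nabla G$ is a general vector of a subspace of dimension $\ge n\ge2$, hence for a general $G$ not proportional to the nonzero row $\nabla F$. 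All the conditions imposed on $G$ are non-empty open, so a general $G$ satisfies them simultaneously, and $\NQS(H)\subseteq B_1$.

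The main obstacle I anticipate is the middle step: obtaining the sharp inclusion $\Bsl(\cL(n)_p)\cap X\subseteq V_n\cup B_1$ --- this is exactly where $a_0=1$ and the hypotheses that $V_n$ be finite and disjoint from $M$ are used --- and then checking that every base point lying outside $B_1$ is a quasi-smooth point of $X$ where the two-gradient genericity argument can be run. The assumption $n\ge2$ enters precisely here, to leave enough room in the relevant subspace for $\nabla G$ to miss the line $\bC\cdot\nabla F$.
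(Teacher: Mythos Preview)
Your proof is correct and follows essentially the same line as the paper's: both arguments (i) apply Bertini on the affine cone to obtain $\NQS(H)\subseteq\NQS(X)\cup\bigl(\Bsl(\cL(j)_p)\cap X\bigr)$, (ii) pin down the base locus by the same monomial analysis (yielding $\{p\}\cup B_1$ for $j=n+1$ and $V_n\cup B_1$ for $j=n$), (iii) discard $M$ since a general $G$ misses the finitely many points of $M\setminus B_1$, and (iv) verify quasi-smoothness at the residual base points by a Jacobian check. The only cosmetic difference is in step~(iv): the paper fixes an index $k$ with $\partial_kF\ne0$, chooses $\ell\ne k,0$, and shows the $2\times2$ minor in columns $(\ell,k)$ is a non-constant affine function of the free coefficient of $z_0^{a_j-a_\ell}z_\ell$, whereas you argue more globally that $\nabla G$ at the relevant point is a general vector in a subspace of dimension $\ge 2$ and hence not proportional to the nonzero $\nabla F$. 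These are two phrasings of the same computation.
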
	

\begin{proof}
	%Set $\cL(j)_p=|\cO_X(a_j) \otimes \cI_p|$ and 
	Let $F(z_0, \ldots , z_{n+1})$ be the defining polynomial of $X$ and let $\pi \colon \bC^{n+2} \setminus \{0 \} \to \bP$ 
	be the quotient morphism. We can assume $M \cap B_1 = \emptyset$.
	
	\smallskip
	
	\noindent Proof of Item \ref{n+1''}. We first show that 
	\begin{equation}\label{eq:baselocusn}
		\pi(Bs(\cL(n+1)_p)) \subseteq B_1 \cup \{p\}.
	\end{equation}
	Let $q=[q_0, \ldots, q_{n+1}] \in X \setminus ( B_1 \cup \{p\})$.  If $q_0 \ne 0$, then $q_k \ne 0$ for some $1 \le k \le n+1$. In this case a monomial of the form $z_0^{a_{n+1}-a_k}z_k$ vanishes in $p$ but not in $q$ and so $q \notin Bs(\cL(n+1)_p)$.
	If $q_0=0$, then $q_k \ne 0$ for some $k \ge 1$ such that $a_k=1$ and the monomial $z_k^{a_{n+1}}$ vanishes in $p$, but not in $q$. This proves the inclusion \eqref{eq:baselocusn}.   
	
	By Bertini's theorem \cite[III, Corollary 10.9, Remark 10.9.2]{Hartshorne77}, (\ref{eq:baselocusn}) implies that $\NQS(H) \subset B_1 \cup \{p\}$, where $H$ is a general element of $\cL(n+1)_p$. Here we used that $M$ is a finite set disjoint from $B_1$ and so $M \cap H = \emptyset$ for $H$ general. 
	
	Next, we need to check that $H$ is quasi-smooth at $p$. Let $G(z_0,\ldots,z_{n+1})$ be a general (weighted homogeneous) polynomial of degree $a_{n+1}$ such that $H$ is given by $F=G=0$. 
	Since $X$ is quasi-smooth at $p$, there exists $k$ such that $\partial_k F(p) \ne 0$. 
	Consider the submatrix of the Jacobian matrix of $H$ given by $\partial_\ell$ and $\partial_k$ 
	%\[
	%\begin{pmatrix}
	% \partial_{\ell}F & \partial_{k}F \\ 
	% \partial_{\ell}G & \partial_{k}G 
	%	\end{pmatrix}
%	\]
where $\ell \ne k$ and $\ell \neq 0$. 
The monomial $z_0^{a_{n+1}-a_\ell}z_{\ell}$ appears in $G$ with non-zero coefficient $c$ and 
we have 
\[
\partial_{\ell} (z_0^{a_{n+1}-a_\ell}z_{\ell}) = z_0^{a_{n+1} - a_{\ell}}, \ \ \partial_k (z_0^{a_{n+1}-a_\ell}z_{\ell})(p) = 0. 
\]
(Note that $k=0$ is allowed since $z_{\ell}(p) =0$.) So we have 
\[
\begin{pmatrix}
	\partial_{\ell}F & \partial_{k}F \\ 
	\partial_{\ell}G & \partial_{k}G 
\end{pmatrix}(p) = 
\begin{pmatrix}
	\partial_{\ell}F(p) & \partial_{k}F(p) \ne 0 \\ 
	c & \partial_{k}G(p) 
\end{pmatrix}, 
\]
where $c$ is the coefficient of $z_0^{a_{n+1}-a_\ell}z_{\ell}$ in $G$. 
By using these and taking the coefficient $c$ general, 
we conclude that the minor is non-zero and this proves Item \ref{n+1''}.

\smallskip	

\noindent Proof of Item \ref{n''}.
We may write   
\[
B_1 \cup V_n = B_1 \cup \{p_1,\ldots ,p_l \}
\]
%$V_n=\{p_1, \ldots , p_l \}$ 
with $p_1, \ldots , p_l \not\in (z_0=0)$ since $B_1 \cap V_n \subset \{(0,\ldots ,0,1) \}$.  

We start showing that 
\begin{equation}\label{eq:baselocus}
	\pi(Bs(\cL(n)_p)) \subseteq  B_1 \cup \{p_1, \ldots , p_l\}.
\end{equation}
%Note that if $a_{n-1}=1$, then $X$ is quasi-smooth, hence smooth not passing through $[0,\ldots,0,1]$ and $Bs|\cO_X(1)|= \emptyset$, so $Bs(\cL(n)_p)=\{p\}$ \xr{right?}.
Let $q=[q_0, \ldots, q_{n+1}] \in X \setminus ( B_1 \cup V_n)$.  If $q_0 \ne 0$, then $q_k \ne 0$ for some $1 \le k \le n $. 
In this case, a monomial of the form $z_0^{a_{n} -a_k}z_k$ vanishes in $p$ but not in $q$ and so $q \notin \pi(Bs(\cL(n)_p))$.
If $q_0=0$, then $q_k \ne 0$ for some $k \ge 1$ such that $a_k=1$ and the monomial $z_k^{a_{n}}$ vanishes in $p$, but not in $q$. This proves inclusion \eqref{eq:baselocus}. 

By Bertini's theorem, \eqref{eq:baselocus} implies that $\NQS(H) \subset B_1 \cup V_n$, where $H$ is a general element of $\cL(n)_p$. Here we used that $M$ is a finite set disjoint from $B_1$ and $V_n$ and so $M \cap H = \emptyset$ for $H$ general. 

Let $q \in \{p_1, \ldots , p_l \}$ so that $q= [q_0,0,\ldots , 0,q_{n+1}]$ (and $q_0 \neq 0$). 
The same argument as in Item \ref{n+1''} shows that a general $H$ is quasi-smooth at $q$. 
(Indeed, let $k$ be such that $\partial_k F(q) \neq 0$ which exists by the quasi-smoothness of $X$ at $q$, and 
take $\ell \in \{1,\ldots ,n \}$ such that $\ell \neq k$. Then we can follow the argument.) 	
Being $V_n$ finite, we conclude that $\NQS(H) \subset B_1$. 	
\end{proof}

\begin{remark}\label{r:automorphism}
Let $j \in \{n,n+1\}$. The conclusion of Lemma \ref{l:NQSnonwellformed} can be reformulated saying that there exists an automorphism $\phi$ of $\bP$ such that $\phi(p)=p$ and 
$$
\NQS(\phi(X) \cap \{z_j=0\}) \subset Bs|\cO_{\phi(X)}(1)|. 
$$ 

In fact, a general divisor $H$ as in the lemma is given by the zero-locus of a polynomial $G(z_0,\ldots,z_{n+1})$ of degree $a_j$ for which $G(p)=0$. Since $G$ is general, it contains the monomial $z_j$ with non-zero coefficient. This implies that we can consider the automorphism $\phi$ given by $z_i \mapsto z_i$ for any $i \neq j$ and $z_j \mapsto G$.
\end{remark}

\begin{lemma}\label{l:cuttingcurve}
Let $X=X_d \subset \bP(a_0, \ldots, a_{n+1})$ be a well-formed weighted hypersurface with $1=a_0 =a_1 \le a_2 \le \ldots  \le a_{n+1}$ and $n \ge 2$. Let $p \in X$ be a point such that $p \notin Bs|\cO_X(1)|$. 
Assume the following:  
\begin{enumerate}
	\item[(a)]	
	$\NQS(X) \subset Bs |\cO_X(1)| \cup M$ for some finite set  
	$M \subset X$ such that $p \notin M$. 
	%Assume that $\NQS(X) \subset Bs|\cO_X(1)|$ 
	%Assume also that 
	\item[(b)] There is $r \in \{2,\ldots,n+1\}$ such that $a_r >1$ and the $r$-th coordinate point $P_r=[0,\ldots,0,1,0,\ldots,0] \notin X$. 
	\item[(c)] If $r=n+1$ and $M \neq \emptyset$, we also assume that $M \subset (z_0 =0)$ and $p \notin (z_0=0)$. 
\end{enumerate}
Let $J=\{2,\ldots, n+1\} \setminus \{r\}$. 

Then $X$ is normal. Moreover, we can take hyperplanes $H_j \in |\cO_X(a_j)|$ through $p$ for $j \in J$  such that 
\begin{enumerate} 
	\item[(i)] $\dps{\Gamma_i := \bigcap_{j \in J, i \le j } H_j}$ is normal for $i=2, \ldots , n+1$. In particular, $C= \Gamma_2= \bigcap_{j \in J} H_j$ is a smooth curve passing through $p$. 
	\item[(ii)] $Z_{H_i} \cap \Gamma_i \subset \Gamma_i$ has codimension $\ge 2$ for $i=2, \ldots , n+1$, where $Z_{H_i}:= \{x \in X \mid H_i \text{ is not Cartier at $x$} \}$. %(this as in Set-up \ref{setup:XLH}).
	%\item[(iii)]$C= \bigcap_{j \in J} H_j$ is a smooth curve passing through $p$.
\end{enumerate}
%Then we can take hyperplanes $H_j \in |\cO_X(a_j)|$ for $j=2, \ldots n$ so that $C= H_2 \cap \cdots \cap H_{n}$ is a smooth curve passing through $p$. 
\end{lemma}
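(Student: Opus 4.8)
The plan is to first show that $X$ is normal, and then to construct the flag by cutting $X$ with hyperplanes of the weights in $J$, taken in \emph{decreasing} order of the weights, while keeping control at each stage of the non-quasi-smooth locus, of $\Sing\bP$, and of well-formedness. Throughout I may assume, after a coordinate change compatible with (c), that $p=[1:0:\dots:0]$ (as in Lemma \ref{l:NQSnonwellformed}).

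\emph{Normality of $X$.} Being a well-formed codimension-$1$ WCI, $X$ is Cohen--Macaulay by Proposition \ref{prop:WCIproperties}, hence satisfies Serre's $S_2$, so by Serre's criterion it remains to check $\codim_X\Sing X\ge 2$. Over $X\setminus\Sing\bP$ the quotient map $\pi\colon C_X^\ast\to X$ is a $\C^\ast$-torsor, so there quasi-smoothness and smoothness coincide and $\Sing X\subseteq(\Sing\bP\cap X)\cup\NQS(X)$; the first set has codimension $\ge 2$ by well-formedness. For the second, note $d\ge 2$ by hypothesis (b), so $\operatorname{Bs}|\cO_X(1)|=B_1:=(z_0=\dots=z_{c_1-1}=0)\cap X$; combined with (a) and the finiteness of $M$, a codimension-$1$ component of $\NQS(X)$ could only be $(z_0=z_1=0)\cap X$ in the case $c_1=2$, and the Jacobian criterion shows this forces the defining polynomial of $X$ into $(z_0,z_1)^2$, whence $P_i\in X$ for every $i$ with $a_i>1$ --- contradicting (b). Hence $\codim_X\NQS(X)\ge 2$ and $X$ is normal.

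\emph{The induction.} Write $J=\{j_1>j_2>\dots>j_{n-1}\}$ and $\Gamma^{(t)}:=X\cap H_{j_1}\cap\dots\cap H_{j_t}$, so $\Gamma^{(0)}=X$, $\Gamma^{(n-1)}=C$, and $\Gamma^{(t)}=\Gamma_i$ whenever $j_t\ge i>j_{t+1}$. I would build $H_{j_1},\dots,H_{j_{n-1}}$ in this order maintaining: $\Gamma^{(t)}$ is an irreducible normal well-formed WCI of dimension $n-t$ containing $p$; $\NQS(\Gamma^{(t)})\subseteq B_1\cup M^{(t)}$ for a finite set $M^{(t)}\not\ni p$ with $M^{(t)}\cap\bP(a_0,a_r)=\emptyset$ (when $r=n+1$ this last disjointness is exactly what (c) provides, since $\bP(a_0,a_r)\setminus\{P_r\}\subseteq\{z_0\neq 0\}$ and $P_r\notin X$; when $r\le n$ it can be arranged at the first, top-weight, cut); and $\dim(\Sing\bP\cap\Gamma^{(t)})\le n-t-2$. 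At $t=0$ these hold by the previous paragraph, (a), (c) and well-formedness of $X$. For the step, set $\Gamma:=\Gamma^{(t-1)}$, $e:=a_{j_t}$, take $H_{j_t}=(G=0)\cap X$ for a general weighted homogeneous $G$ of degree $e$ with $G(p)=0$, and put $\Gamma'=\Gamma\cap H_{j_t}$. Since the system of such $G$, restricted to $\Gamma$, contains all $z_k^{\,e}$ with $a_k=1$ and all $z_0^{\,e-a_k}z_k$ with $a_k\le e$, its base scheme on $\Gamma$ lies in $B_1\cup V$ with $V\subseteq\bP(a_0,a_r)\cap\Gamma$ (and $V=\{p\}$ at the top-weight cuts), which is \emph{finite} because $P_r\notin X\supseteq\Gamma$. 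By Bertini's theorem (\cite[III, Corollary 10.9.2]{Hartshorne77}) a general $\Gamma'$ is quasi-smooth away from $B_1\cup M^{(t-1)}\cup V$, and the Jacobian computation of Lemma \ref{l:NQSnonwellformed} --- using that $z_0^{\,e-a_\ell}z_\ell$ occurs in $G$ for a suitable index $\ell\ge1$ --- shows $\Gamma'$ is quasi-smooth at $p$ and at the points of $V$ (here one uses precisely that $M^{(t-1)}\cap V=\emptyset$); hence $\NQS(\Gamma')\subseteq B_1\cup M^{(t)}$ with $M^{(t)}$ finite and $p\notin M^{(t)}$. As $p\notin\Sing\bP$, a general $H_{j_t}$ meets every positive-dimensional component of $\Sing\bP\cap\Gamma$ properly, so $\dim(\Sing\bP\cap\Gamma')\le n-t-2$; and $G$ is a non-zerodivisor on the irreducible $\Gamma$ (as $p\in\Gamma\setminus B_1$), so $\Gamma'$ is again an irreducible normal well-formed WCI by the $R_1+S_2$ argument above applied with $P_r\notin\Gamma'$. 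This closes the induction.

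\emph{Conclusion.} Then $C=\Gamma_2=\Gamma^{(n-1)}$ is normal of dimension $1$, hence a smooth curve, and passes through $p$; each $\Gamma_i$ is normal, giving (i). For (ii), $\cO_\bP(k)$ is invertible on $\bP\setminus\Sing\bP$, hence so is $\cO_X(k)$, and a member of $|\cO_X(a_i)|$ is Cartier wherever $\cO_X(a_i)$ is invertible, so $Z_{H_i}\subseteq\Sing\bP\cap X$ and $Z_{H_i}\cap\Gamma_i\subseteq\Sing\bP\cap\Gamma_i$ has dimension $\le\dim\Gamma_i-2$ by the invariant. I expect the main obstacle to be the bookkeeping in the inductive step: keeping each base locus of hyperplanes through $p$ inside $B_1$ up to finitely many coordinate points, dispatching those points by the Jacobian trick (which is what forces the Lemma \ref{l:NQSnonwellformed}-type side conditions, and explains why (c) is needed when $r=n+1$), and ensuring that the accumulated sets $M^{(t)}$ and $\Sing\bP\cap\Gamma^{(t)}$ never acquire a component inside $B_1$ that would destroy well-formedness --- which is exactly why $r$ is excluded from $J$ and why $P_r\notin X$ is assumed.
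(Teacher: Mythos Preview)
Your approach is essentially the same as the paper's: reduce to $p=[1:0:\dots:0]$, cut in decreasing order of weight, and at each step invoke Bertini on the linear system $\cL(j_t)_p$ together with the Jacobian $2\times 2$ minor computation of Lemma~\ref{l:NQSnonwellformed} to control $\NQS$ at the finitely many residual base points, while tracking $\Sing\bP\cap\Gamma^{(t)}$ to get well-formedness and (ii). The paper organizes the same argument as two explicit steps (weights above $a_r$ via Lemma~\ref{l:NQSnonwellformed}\eqref{n+1''}, then weights below $a_r$ via Lemma~\ref{l:NQSnonwellformed}\eqref{n''}), and after each cut uses the automorphism of Remark~\ref{r:automorphism} to replace $H_{j_s}$ by the coordinate hyperplane $(z_{j_s}=0)$, so that $\Gamma^{(t)}$ literally sits in a smaller weighted projective space; your base-locus claim $V\subseteq\bP(a_0,a_r)\cap\Gamma$ tacitly uses this identification, so you should say so. Also, your normality argument for $X$ is more circuitous than needed: since $P_r\in(z_0=\dots=z_{c_1-1}=0)$ but $P_r\notin X$, one has directly $B_1=(z_0=\dots=z_{c_1-1}=F=0)\subsetneq(z_0=\dots=z_{c_1-1}=0)$, hence $\codim_X B_1\ge 2$, without the detour through $(z_0,z_1)^2$.
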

\begin{proof}  
Let $F$ be the defining polynomial of $X$. Condition (a) and $P_r \notin X$ imply that 
\[
Bs |\cO_X(1)| \subset (z_0 = z_1=F=0) \subsetneq (z_0=z_1=0) \subset \bP, 
\]
we see that $\NQS(X) \subset X$ has codimension $\ge 2$. 
Since $X$ is CM, it satisfies Serre's ($S_2$) condition by Proposition \ref{prop:WCIproperties}(iii), which implies that $X$ is normal.   

Since $p \notin Bs| \cO_X(1)|$ up to reordering the coordinates with weights $1$ and applying automorphims of the form $z_0 \mapsto z_0$ and $z_i \mapsto z_i - \lambda_i z_0^{a_i}$ with $\lambda_i \in \bC$ if $i>0$, we can assume that $p=[1,0,\ldots,0]$. Note that we still have 
$
\NQS(X) \subset  Bs|\cO_X(1)| \cup M
$
for some finite set $M \subset X$ such that $p \notin M$. If $r=n+1$, we do not reorder the coordinates by the assumption $p \notin (z_0=0)$ and still have $M \subset (z_0=0)$. 
We also have $P_r = [0,\ldots,0,1, 0 ,\ldots,0] \notin X$.

%		By applying Lemma \ref{l:NQSnonwellformed}\eqref{n+1} repeatedly, we can take $H_{j} \in |\cO_X(a_j) \otimes \cI_p |$ for $j=n+1, \ldots, r+1$ defined 
%	by  $F=G_j=0$ so that 
%	$H_{n+1} \cap \cdots \cap H_{r+1}$ is isomorphic to a hypersurface $Y \subset \bP(1,1, a_{2},\ldots, a_{r})$, where $G_j$ is a general (weighted homogeneous) polynomial of degree $a_j$ vanishing in $p$. 

\smallskip

\noindent\textbf{Step 1.} Assume that $r <n+1$. (If $r=n+1$, we start from \textbf{Step 2}.) Let $\cI_p \subset \cO_X$ be the ideal sheaf of the point $p \in X$. 
By Lemma \ref{l:NQSnonwellformed}\eqref{n+1''}, we can take a divisor $H_{n+1} = (F= G_{n+1} =0) \in |\cO_X(a_{n+1}) \otimes \cI_p|$  such that 
$\NQS(H_{n+1}) \subset Bs |\cO_X(1)|$. 
Since the singular locus $\Sing \bP$ of $\mathbb P$ is contained in $(z_0 = z_1 =0)$, 
we see that
\[
H_{n+1} \cap \Sing \bP \subset %H_{n+1} \cap (z_0=z_1=0) =
(z_0=z_1=F=G_{n+1}) \subsetneq (z_0=z_1 = G_{n+1}=0) \subsetneq (z_0 = z_1 =0),  
\] 
where the first $\subsetneq$ follows from $P_r \notin X$ and $G_{n+1}(P_r)=0$, and the second one from the fact that $G_{n+1}$ is general. 
By this and $(z_0=z_1=G_{n+1}=0)$ is irreducible, we see that $H_{n+1} \cap \Sing \bP$ has codimension $\ge 2$ in $H_{n+1}$, thus $H_{n+1}$ is well-formed and $Z_{H_{n+1}} \cap H_{n+1} \subset H_{n+1}$ has codimension $\ge 2$. 
By the same argument, we see that $\NQS(H_{n+1}) \subset H_{n+1}$ has codimension $\ge 2$ by $\NQS(H_{n+1}) \subset Bs |\cO_{H_{n+1}}(1)|$, thus 
$H_{n+1}$ is normal.  

Since $H_{n+1}$ is isomorphic to a well-formed hypersurface in $\bP(a_0, \ldots , a_n)$, 
we can apply Lemma \ref{l:NQSnonwellformed}\eqref{n+1''} repeatedly to take 
$H_j = (F= G_j =0) \in |\cO_X(a_j) \otimes \cI_p|$ for $j=n, \ldots , r+1$ so that 
$\Gamma_i := \bigcap_{j \in J, i \le j } H_j$ for $i=n, \ldots , r+1$ is normal with the property (ii), isomorphic to a well-formed hypersurface 
in $\bP(a_0, \ldots , a_{i-1})$ and $\NQS(\Gamma_i) \subset Bs |\cO_{\Gamma_i}(1)|$. 
%(Note that if $r=n+1$, we are not doing anything in this step.)  

%	By the same argument, we see that $H_{n+1} \cap \cdots \cap H_j$ is well-formed for $j=n, \ldots, r+1$, thus $Y=H_{n+1} \cap \cdots \cap H_{r+1}$ is also well-formed 
%	and $\NQS (Y) \subset Bs |\cO_Y(1)|$.

\smallskip

\noindent\textbf{Step 2.} Note that $P_r \notin X$ implies that $V_{r-1}:= \bigcap_{i=1}^{r-1} (z_i=0)  \subset \Gamma_{r+1}$ is finite. Note also that 
when $r=n+1$, we have $V_n \cap M \subset \bigcap_{i=0}^n (z_i=0) = \{P_{n+1} \}$, thus $V_n \cap M = \emptyset$ by $P_{n+1} \notin X$.  
Hence we can now repeat the same procedure applying Lemma \ref{l:NQSnonwellformed}\eqref{n''} to $ \Gamma_{r+1} = Y \subset \bP(a_0, \ldots , a_{r})$ (let $\Gamma_{n+2} := X$ when $r= n+1$).  
That is, we can take $H_i \in |\cO_X(a_i) \otimes \cI_p|$ for $i=r-1, \ldots ,2$ so that $\Gamma_i$ satisfies (i) and (ii). 
%\xr{the condition on $V_n \cup M$?? esp when $r=n+1$?}. 
In particular, for $i=2$, we get that $C= \cap_{j \in J} H_j = \Gamma_2$ is normal, thus smooth. 
% and satisfies (iii). 	
%We also see that $C$ is isomorphic to a well formed hypersurface $C  \subset \bP(1,1,a_r)$ 
%such that $Bs |\cO_{C}(1)| = \emptyset$ (because $P_r \notin X$). Hence $C$ is a smooth curve, thus obtain (iii).	
%Note that $Y$ is quasi-smooth (and so smooth), since $\NQS (Y) \subset Bs |\cO_Y(1)|$ and $Bs |\cO_{Y}(1)| = \emptyset$, because $[0,\ldots,0,1] \notin X$. 
%Then the base locus $Bs| \cO_Y(1) \otimes \cI_p|$ is a finite set $\{p_1, \ldots , p_l \}$ and we see that a general member $H' \in | \cO_Y(1) \otimes \cI_p|$ is quasi-smooth at $p_1, \ldots , p_l$ by considering the partial derivatives of the defining equation of $Y$ since $Y$ is quasi-smooth. We can take general hyperplanes $H'_2, \ldots , H'_{c_1-1} \in |\cO_Y(1) \otimes \cI_p|$ so that $H'_2 \cap \cdots \cap H'_{c_1-1}$ is a smooth curve. 
%We can easily check the properties (i) and (ii) as well. \xr{??}
\end{proof}

The following example shows that the assumption $P_r \notin X$ in Lemma \ref{l:cuttingcurve} is necessary.

\begin{example}
Consider $X=X_5 \subset \bP(1^{(4)},2)$ given by $z_0^4z_1+z_1^5+z_2^5+z_3^5+z_3z_4^2=0$ and $p=[1,0,\ldots,0] \in X$. Then $X$ is a quasi-smooth Fano of index 1. It is easy to see that for any choice of $H_1, H_2 \in |\cO_X(1) \otimes \cI_p|$ we get a reducible curve $C=H_1 \cap H_2$ with a singular point at $[*,0,0,0,*]$. Indeed, the curve $C$ is a union of some curve and the line $(z_1=z_2=z_3=0)$, since the line is contained in the base locus of  $|\cO_X(1) \otimes \cI_p|$.
(For example, $X \cap (z_3-z_2=0) \cap (z_2-z_1=0) \simeq (z_1(z_0^4+z_1^4 + z_4^2) =0) \subset \bP(1,1,2)$ .)
\end{example}

\section{delta invariants and Abban--Zhuang method} \label{s:thresholds}

Let $(X, \Delta)$ be a {\it pair} (resp. {\it projective pair}), that is, $X$ is a normal variety (resp. {\it normal projective variety}) and $\Delta$ is an effective $\bQ$-divisor such that $K_X+ \Delta$ is $\bQ$-Cartier. 
Let $E$ be a divisor over $X$, that is, a prime divisor $E \subset Y$ on some variety $Y$ with a projective birational morphism $Y \to X$. 
For a divisor $E$ over a variety $X$, let $A_{(X,\Delta)} (E)$ be the log discrepancy of the divisor $E$. 
For a valuation $\nu$ on a variety $X$, let $A_{(X,\Delta)} (\nu)$ be the log discrepancy of the valuation $\nu$ (cf. \cite{MR3060755}). 

Let $(X, \Delta)$ be a pair and $Z \subset X$ a closed subvariety. 
We say that a pair $(X, \Delta)$ is {\it klt near $Z$} (resp. {\it lc near $Z$}) if 
there exists an open neighborhood $U$ of the generic point of $Z$ such that $(U, \Delta|_U)$ is klt (resp. lc).

We shall give the definition of the delta invariants by using valuations in the following. 

\begin{definition}\label{defn:valuativeSdeltadeltaZ} \cite[Definitions 2.2, 2.4 and 2.5]{AZ22}
	Let $(X, \Delta)$ be a pair and $L$ be a $\bQ$-Cartier big divisor. 
	Let $Z \subset X$ be a closed subvariety.  
	\begin{enumerate}
		\item[(i)] For a valuation $\nu$ over $X$ of ``linear growth'' (e.g. divisorial, finite discrepancy, etc \cite[Definition 2.2]{AZ22}), we let 
		\[
		S(L ; \nu):= \frac{1}{\vol(L)} \int_{0}^{\infty} \vol (L; \nu \ge t) dt, 
		\] 
		where 
		\[
		\vol (L; \nu \ge t):= \lim_{m \to \infty} \frac{\dim \{ s\in H^0(X, mL) \mid \nu(s) \ge mt \}}{m^{\dim X}/ (\dim X)!}. 
		\]
		When $\nu = \ord_E$ is a valuation induced by a divisor $E$ over $X$, we write $S(L; E):= S(L; \ord_E)$. 
		When $L$ is ample and $\sigma \colon \tilde{X} \to X$ is a projective birational morphism with a prime divisor $E \subset \tilde{X}$ over $X$, we have  
		\[
		S(L; E) = \frac{\int_0^{\infty} \vol(\sigma^*L- xE) dx }{L^n}. 
		\]
		\item[(ii)] Assume that $(X, \Delta)$ is klt. We define 
		\begin{equation}\label{eq:defndelta}
		\delta(L):= \delta(X, \Delta;L):= \inf_E \frac{A_{(X,\Delta)}(E)}{S(L; E)} = \inf_{\nu} \frac{A_{(X,\Delta)}(\nu)}{S(L; \nu)}, 
		\end{equation}
		where $E$ runs over all divisors over $X$ and $\nu$ runs over all valuations over $X$. 
		It is known that the latter infimum is indeed a minimum (that is, achieved by some valuation $\nu$) by 
		\cite[Theorem E]{MR4067358}.
		\item[(iii)] Assume that $(X, \Delta)$ is klt at the generic point of $Z \subset X$.  
		We define 
		\begin{equation}\label{eq:defndeltaZ}
		\delta_Z(L):= \inf_{E, Z \subset c_X(E)} \frac{A_{(X,\Delta)}(E)}{S(L; E)} = \inf_{\nu, Z \subset c_X(\nu)} \frac{A_{(X,\Delta)}(\nu)}{S(L; \nu)}, 
		\end{equation}
		where $E$ runs over all divisors over $X$ whose center contains $Z$ and $\nu$ runs over all valuations over $X$ whose center contains $Z$. 
		The latter infimum here can also be shown to be a minimum by the same proof of \cite[Theorem E]{MR4067358} (cf. \cite[Definition 2.5]{AZ22}).  
	\end{enumerate}
\end{definition}

We can also define the delta invariants by using the notion of basis type divisors as follows, for the equivalence see \cite{MR4067358}, \cite[Definition 2.5]{AZ22}.  

\begin{definition}
	Let $(X, \Delta)$ be a pair. 
	Let $L$ be a big $\bQ$-Cartier ($\bQ$-)divisor on $X$. 
	Let $V \subset H^0(X, mL)$ be a non-zero linear subspace. 
	\begin{enumerate}
		\item[(i)] We say that a ($\bQ$-)divisor $D$ is a {\it basis type divisor} of $V$ if $V \neq \{0 \} $ and 
		$D= \sum_{i=1}^N (s_i=0)$ for some basis $s_1, \ldots , s_n$ of $V$, 
		where $(s_i=0)$ is the effective ($\bQ$-)divisor determined by $s_i \in V$. 
		(For $V=\{0 \} \subset H^0(X, mL)$, we say that $D=0$ is a basis type divisor of $V$ by convention.  ) 
		
		Let $M(L):= \left\{ m \in \bZ_{>0} \mid h^0(mL) \neq 0 \right\}$. 
		For $m \in M(L)$, we say that a $\bQ$-divisor $D$ is an {\it $m$-basis type $\bQ$-divisor} if 
		\[
		D= \frac{1}{mh^0(X, mL)} D_m  
		\]
		for some basis type divisor $D_m$ of $H^0(X, mL)$. 
		
		\item[(ii)]Let $m \in M(L)$ and $\nu \in \Val_X^*$ be a valuation of linear growth. 
		Let 
		\[
		S_m(L;\nu):= \sup_{D \sim_{\bQ} L} \nu(D), 
		\]
		where $D$ runs over all $m$-basis type $\bQ$-divisors of $L$. 
		Then we have 
		\[
		S(L;\nu) = \lim_{m \to \infty} S_m(L; \nu), 
		\] where $S(L;\nu)$ is the invariant defined in Definition \ref{defn:valuativeSdeltadeltaZ}(i). 
		\item[(iii)] 
		Let $(X, \Delta)$ be a klt pair. For $m \in M(L)$, let 
		\[
		\delta_m(L):= \sup \left\{ \lambda \ge 0 \mid (X, \Delta+ \lambda D): \text{lc ($\forall D$: $m$-basis type $\bQ$-divisor of $L$)}  \right\}. 
		\]
		Then we have $\delta(L) = \lim_{m \to \infty} \delta_m(L)$ for the invariant $\delta(L)$ defined in Definition \ref{defn:valuativeSdeltadeltaZ}(ii). 
		
		\item[(iv)] Let $(X, \Delta)$ be a pair which is klt near a closed subvariety $Z \subset X$. Let
		\[
		\delta_{Z,m}(L):= \sup \left\{ \lambda \ge 0 \mid (X, \Delta+ \lambda D): \text{lc near $Z$ ($\forall D$: $m$-basis type $\bQ$-divisor of $L$)}  \right\}. 
		\]
		Then we have $\delta_Z(L) = \lim_{m \to \infty} \delta_{Z,m}(L)$ for the invariant $\delta_Z(L)$ defined in Definition \ref{defn:valuativeSdeltadeltaZ}(iii). 
	\end{enumerate}
\end{definition}

The important property is the following relation with the K-stability. 

\begin{theorem}\label{thm:delta_stability} (\cite{MR4067358}, \cite{MR3956698}, \cite{MR3896135}, \cite{MR3715806}, \cite{LXZ22})
	Let $X$ be a klt Fano variety. Then it is K-semistable (resp. K-stable) if and only if $\delta(-K_X) \ge 1$ (resp. $\delta(-K_X) >1$) 
	if and only if $\delta_p(-K_X) \ge 1$ (resp. $\delta_p(-K_X) >1$) for all $p \in X$. 
\end{theorem}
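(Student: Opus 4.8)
The statement is a synthesis of several deep foundational results, so the plan is to assemble it from those and supply only a short formal argument for the local--global comparison.

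The semistable equivalence is the most direct one. By the valuative criterion for K-semistability (\cite{MR3956698, MR3896135}), a klt Fano variety $X$ is K-semistable if and only if $A_X(E)\ge S(-K_X;E)$ for every divisor $E$ over $X$. Since $\delta(-K_X)=\inf_E A_X(E)/S(-K_X;E)$ by Definition~\ref{defn:valuativeSdeltadeltaZ}(ii) (and this agrees with the basis-type description, see \cite{MR3715806, MR4067358}), this is precisely the assertion $\delta(-K_X)\ge 1$. For the stable case I would first invoke \cite{LXZ22}, which shows that for Fano varieties K-stability coincides with uniform K-stability, and then combine it with the characterization (\cite{MR4067358}) of uniform K-stability as $\delta(-K_X)>1$; the latter uses crucially that the infimum defining $\delta$ is attained by some valuation (\cite[Theorem~E]{MR4067358}). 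Chaining these equivalences gives $X$ K-stable $\iff \delta(-K_X)>1$.

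The local reformulation then follows from the identity $\delta(-K_X)=\min_{p\in X}\delta_p(-K_X)$, where $p$ ranges over closed points. One inequality is immediate: $\delta_p(-K_X)\ge \delta(-K_X)$ for every $p$, because $\delta_p$ is an infimum over the smaller family of valuations whose center contains $p$. For the reverse, choose a valuation $\nu_0$ computing $\delta(-K_X)$, which exists by \cite[Theorem~E]{MR4067358}; since $X$ is projective, its center $c_X(\nu_0)$ is a nonempty closed subvariety and hence contains a closed point $p_0$, so $\delta_{p_0}(-K_X)\le A_X(\nu_0)/S(-K_X;\nu_0)=\delta(-K_X)$, which forces equality. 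Given $\delta=\min_p\delta_p$, both local equivalences drop out: $\delta(-K_X)\ge 1$ (resp. $>1$) if and only if $\delta_p(-K_X)\ge 1$ (resp. $>1$) for all $p$.

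The real content sits entirely in the quoted inputs: the valuative criteria of Fujita and Li, the identification of K-stability with uniform K-stability via the finite generation results of Li--Xu--Zhuang, and the properness/compactness statements ensuring that $\delta$ (and each $\delta_p$) is computed by an honest valuation. None of these would be reproved here; modulo them, the argument above is bookkeeping.
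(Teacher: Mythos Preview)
Your assembly of the cited inputs is correct and is exactly how one would justify the statement from the literature: the valuative criteria of Fujita and Li give the semistable equivalence, the finite generation results in \cite{LXZ22} identify K-stability with uniform K-stability so that the Blum--Jonsson characterization $\delta>1$ applies, and the local--global identity $\delta=\min_p\delta_p$ follows from the existence of a minimizing valuation together with the fact that its center contains a closed point.

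For comparison with the paper: there is nothing to compare. The paper does not prove this theorem at all; it simply records it as a known result with the string of citations you see in the statement. So your write-up is not an alternative proof but rather a helpful expansion of what the citations actually contribute, which the paper leaves entirely implicit.
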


The following proposition on the delta invariants via finite surjective morphisms is a local variant of \cite[Theorem 1.1]{Dervan}, \cite[Corollary 1.7]{MR3960299},\cite[Corollary 4.13]{MR4309493}, \cite[Theorem 1.2]{MR4380122},  and it is known to experts. We include a proof for completeness. 

\begin{proposition}\label{prop:deltafinitecover}
	Let $(X, \Delta)$ and $(Y, \Delta_Y)$ be a pair. 
	Let $\pi \colon Y \to X$ be a finite surjective morphism with 
	\[
	K_Y + \Delta_Y = \pi^* (K_X + \Delta). 
	\]   
	Let $p \in X$ and $q \in Y$ such that $\pi(q) = p$ and $(X, \Delta)$ is klt at $p$. 
	Let $L$ be an ample line bundle on $X$. 
	%Assume that $\pi$ is \'{e}tale at $q$ (\xr{?}). 
	Then we have 
	\[
	\delta_q(Y, \Delta_Y; \pi^*L) \le \delta_p(X, \Delta; L). 
	\]
	 In addition, assume that the equality holds and that every valuation that computes $\delta_q(Y, \Delta_Y; \pi^*L)$ is induced by a prime divisor on $Y$. Then, if  $\delta_p(X, \Delta; L)$ is computed by a divisor over $X$, then there exists a prime divisor on $X$ that computes it.
\end{proposition}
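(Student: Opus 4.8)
The plan is to prove both assertions by lifting valuations from $X$ to $Y$ along $\pi$ and using the two standard transformation rules under the crepant finite cover $K_Y+\Delta_Y=\pi^*(K_X+\Delta)$. Set $d:=\deg\pi$. Given a divisor $E$ over $X$ with $p\in c_X(E)$, realize it on a birational model $\sigma\colon\tilde X\to X$ (so $\nu:=\ord_E$ and $c_X(\nu)=\sigma(E)$), let $\tilde Y$ be the normalization of the dominant component of $\tilde X\times_X Y$, and let $\tilde\pi\colon\tilde Y\to\tilde X$ (finite of degree $d$) and $\rho\colon\tilde Y\to Y$ (proper birational) be the induced maps. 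Since $\pi$ is proper, every valuation of $K(Y)$ restricting to $\nu$ has center on $Y$ inside $\pi^{-1}(c_X(\nu))$, and conversely $q\in\pi^{-1}(c_X(\nu))$ forces the existence of a valuation $\mu$ of $K(Y)$ with $\mu|_{K(X)}=e\cdot\nu$ for a ramification index $e\in\bZ_{>0}$ and $q\in c_Y(\mu)$; realizing $\mu$ divisorially on $\tilde Y$ gives a prime divisor $F\subset\tilde Y$ with $\tilde\pi^*E\ge eF$, $q\in c_Y(F)$, and $\ord_F|_{K(X)}=e\cdot\ord_E$. For such $F$,
\[
A_{(Y,\Delta_Y)}(F)=e\cdot A_{(X,\Delta)}(E),\qquad S(\pi^*L;F)\ge e\cdot S(L;E).
\]
The equality is the standard behaviour of log discrepancies under a crepant finite cover (computed on a common log resolution; cf.\ \cite{Dervan}); in particular $(Y,\Delta_Y)$ is klt at $q$, so $\delta_q(Y,\Delta_Y;\pi^*L)$ is defined, and $\pi^*L$ is ample.

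For the inequality on $S$, I would work on the models above. Since $\pi\circ\rho=\sigma\circ\tilde\pi$ and $\tilde\pi^*E\ge eF$, for $t\ge0$ and $m\gg0$ the space $\{s\in H^0(Y,m\pi^*L)\mid\ord_F(s)\ge mt\}$ contains, via $\rho^*$ and the projection formula for the finite map $\tilde\pi$, the subspace
\[
H^0(\tilde X,(m\sigma^*L-\lceil mt/e\rceil E)\otimes\tilde\pi_*\cO_{\tilde Y})=H^0(\tilde Y,\tilde\pi^*(m\sigma^*L-\lceil mt/e\rceil E)).
\]
As $\tilde\pi_*\cO_{\tilde Y}$ has rank $d$ and $\vol(\tilde\pi^*D)=d\cdot\vol(D)$ for a finite morphism of degree $d$, this yields $\vol(\pi^*L;\ord_F\ge t)\ge d\cdot\vol(L;\ord_E\ge t/e)$; dividing by $\vol(\pi^*L)=d\cdot\vol(L)$ and substituting $u=t/e$ gives $S(\pi^*L;F)\ge e\cdot S(L;E)$.

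Granting all this, the inequality follows: for each divisor $E$ over $X$ with $p\in c_X(E)$, taking the lift $F$ over $Y$ with $q\in c_Y(F)$,
\[
\delta_q(Y,\Delta_Y;\pi^*L)\le\frac{A_{(Y,\Delta_Y)}(F)}{S(\pi^*L;F)}=\frac{e\,A_{(X,\Delta)}(E)}{S(\pi^*L;F)}\le\frac{e\,A_{(X,\Delta)}(E)}{e\,S(L;E)}=\frac{A_{(X,\Delta)}(E)}{S(L;E)},
\]
and taking the infimum over such $E$ gives $\delta_q(Y,\Delta_Y;\pi^*L)\le\delta_p(X,\Delta;L)$ (testing divisorial valuations suffices, by Definition \ref{defn:valuativeSdeltadeltaZ}). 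For the addendum, assume equality and that every valuation computing $\delta_q(Y,\Delta_Y;\pi^*L)$ is induced by a prime divisor on $Y$, and let $E$ be a divisor over $X$ with $p\in c_X(E)$ computing $\delta_p(X,\Delta;L)$. Lifting $E$ to $F$ over $Y$ with $q\in c_Y(F)$ as above, the chain
\[
\delta_q(Y,\Delta_Y;\pi^*L)\le\frac{A_{(Y,\Delta_Y)}(F)}{S(\pi^*L;F)}\le\frac{A_{(X,\Delta)}(E)}{S(L;E)}=\delta_p(X,\Delta;L)=\delta_q(Y,\Delta_Y;\pi^*L)
\]
forces $F$ to compute $\delta_q(Y,\Delta_Y;\pi^*L)$, so $\ord_F$ is a positive multiple of $\ord_{F'}$ for a prime divisor $F'\subseteq Y$ with $q\in F'$. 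Then $E':=\pi(F')$ is a prime divisor on $X$ (the image of a prime divisor under the finite map $\pi$), and by compatibility of centers with proper morphisms $c_X(E)=c_X(\ord_F|_{K(X)})=\pi(c_Y(\ord_F))=\pi(F')=E'$. Hence $\ord_E$ is a divisorial valuation whose center on $X$ is the divisor $E'$, so its valuation ring is $\cO_{X,E'}$ and $\ord_E=k\cdot\ord_{E'}$ for some $k\in\bZ_{>0}$; since $A_{(X,\Delta)}(\cdot)$ and $S(L;\cdot)$ are homogeneous in the valuation and $p\in E'=c_X(E')$, the prime divisor $E'$ on $X$ computes $\delta_p(X,\Delta;L)$.

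The technical heart, and the step I expect to be the main obstacle, is the inequality $S(\pi^*L;F)\ge e\cdot S(L;E)$: estimating with only sections pulled back from $X$ loses the factor $d=\deg\pi$, and recovering it forces one to use essentially all sections of $m\pi^*L$ on $Y$ — via the rank-$d$ pushforward $\tilde\pi_*\cO_{\tilde Y}$ and the multiplicativity $\vol(\tilde\pi^*D)=d\cdot\vol(D)$ of volumes under finite morphisms. The remaining ingredients — lifting valuations and matching centers along $\pi$, the log-discrepancy transformation, and the final descent of the computing divisor — are standard.
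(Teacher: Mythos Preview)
Your proposal is correct and follows essentially the same route as the paper: lift a divisor $E$ over $X$ with $p\in c_X(E)$ to a component $F$ of $\tilde\pi^{-1}(E)$ on the normalized fibre product $\tilde Y$ with $q\in c_Y(F)$, use the crepant transformation $A_{(Y,\Delta_Y)}(F)=e\,A_{(X,\Delta)}(E)$, and bound $S(\pi^*L;F)\ge e\,S(L;E)$ via the two volume inequalities $\vol(\tilde\pi^*(\sigma^*L-xE))\le\vol(\sigma'^*\pi^*L-xeF)$ and $\vol(\tilde\pi^*D)\ge d\,\vol(D)$; your treatment of the equality case (descending $F'$ to $E'=\pi(F')$ and identifying $c_X(E)=E'$) is slightly more detailed than the paper's one-line remark that ``$E\subset X$ since $\tilde\pi$ is finite'', but the content is the same.
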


\begin{proof}
	Let $E$ be a divisor over $X$ whose center contains $p$ 
	and $\sigma \colon \tilde{X} \to X$ be the associated prime blow-up (cf. \cite{MR2063783}, \cite{MR3960299}) with the exceptional divisor $E \subset \tilde{X}$ such that 
	$-E$ is $\sigma$-ample $\bQ$-Cartier divisor (thus $p \in \sigma(E)$). Note that $\sigma^{-1}(\sigma(E)) = E$ since $\sigma$ is the blow-up along some ideal $\cI \subset \cO_X$ such that $\cO_X / \cI$ is supported on $\sigma(E)$ (cf. \cite[Proposition 2.4]{MR2063783}). 
	Then we have the commutative diagram 
	\[
	\xymatrix{
		\tilde{Y} \ar[r]^{\tilde{\pi}} \ar[d]^{\sigma'} & \tilde{X} \ar[d]^{\sigma} \\
		Y \ar[r]^{\pi} & X, 
	}
	\]
	where $\tilde{Y}$ is the normalization of the fiber product. 
	Let 
	\[
	\tilde{\pi}^* (E) := \sum r_j E'_j.
	\]
	Note that 
	\[
	\bigcup E'_j = \tilde{\pi}^{-1}(E) = \tilde{\pi}^{-1} (\sigma^{-1} (\sigma(E))) = (\sigma')^{-1} (\pi^{-1} (\sigma(E)))
	\]
	and some irreducible component $Z \subset \pi^{-1}(\sigma(E))$ contains $q$. 
	Then there exists $E'_i$ which dominates $Z$ and $q \in \sigma' (E'_i)$.  
	%(\xr{by some general argument? or easy to see?})
	
	The rest is the same computation as in the proof of \cite[Corollary 1.7]{MR3960299}. Since we have $A_{(Y, \Delta_Y)}(E'_i) = r_i A_{(X, \Delta)}(E)$, for $d:= \deg \pi$, we obtain 
	\begin{multline*}
	\frac{A_{(Y,\Delta_Y)}(E'_i)}{S(\pi^*L; E'_i)} = \frac{A_{(Y,\Delta_Y)}(E'_i) \cdot (\pi^*(L)^n)}{\int_0^{\infty} \vol(\sigma'^{*} \pi^*(L) - y E'_i) dy} = 
	\frac{A_{(X,\Delta)}(E) \cdot (d L^n)}{\int_0^{\infty} \vol(\sigma'^{*} \pi^*(L) - xr_i E'_i) dx} \\ 
	\le \frac{A_{(X,\Delta)}(E) \cdot (d L^n)}{\int_0^{\infty} \vol(  \tilde{\pi}^*(\sigma^*L - xE)) dx} \le  \frac{A_{(X,\Delta)}(E) \cdot ( L^n)}{\int_0^{\infty} \vol( \sigma^*L - xE) dx}  = \frac{A_{(X,\Delta)}(E)}{S(L; E)}, 
	\end{multline*}
	where we used $\vol(  \tilde{\pi}^*(\sigma^*L - xE)) \le \vol(  \tilde{\pi}^*(\sigma^*L ) - x r_i E'_i)$ in the first inequality and used 
	$
	\vol(  \tilde{\pi}^*(\sigma^*L - xE)) \ge d \vol( \sigma^*L - xE)
	$
	(cf. \cite[Lemma 4.1]{MR3960299}) in the 2nd inequality. 
	This implies the required inequality.

To prove the last statement, consider a divisor $E$ over $X$ that computes $\delta_p(X, \Delta; L)$. Then the above proof gives a divisor $E'_i$ on $\tilde Y$ such that
$$
\delta_q(Y, \Delta_Y; \pi^*L)=\frac{A_{(Y,\Delta_Y)}(E'_i)}{S(\pi^*L; E'_i)}=\frac{A_{(X,\Delta)}(E)}{S(L; E)}=\delta_p(X, \Delta; L),
$$
since equalities must hold in the chain of inequalities.
By the assumption, any valuation computing $\delta_q(Y, \Delta_Y; \pi^*L)$ is induced by a prime divisor on $Y$, which means that actually $E_i'$ lives already on $Y$ and so $E \subset X$ since $\tilde{\pi}$ is finite. 
\end{proof}

\subsection{Abban--Zhuang method}
In this subsection we explain how to adapt the Abban--Zhuang method in \cite{AZ22} to the situation we are interested in. The main difference is that we need to work with divisors that are not Cartier (or plt type).

More precisely, we are going to use the following set-up.

\begin{setup}\label{setup:XLH}
	Let $(X, \Delta)$ be a projective pair such that $\lfloor \Delta \rfloor =0$ and $L$ be an ample $\bQ$-Cartier divisor.  Assume that $\cO_X(kL)$ is CM for all $k \in \bZ$.  
	Let 
	\[
	Z_L:= \{x \in X \mid L \text{ is not Cartier at $x$} \} \subset X
	\]
	and assume that $Z_L \subset X$ has codimension $\ge 3$. 
	%$Z_H \subset X$ with $\codim_{X} Z_H \ge 2$ and $\codim_{H} (Z_H \cap H) \ge 2$ 
	Let $X':= X \setminus Z_L$.  
	%(\xr{the previous $Z_H$ implies the existence of this $Z_L$ since $H|_{X \setminus H} = 0$ is Cartier}). 
	Let $H \in |L|$ be a reduced and irreducible divisor such that $H$ is normal and 
	%$(X, \Delta+H)$ is klt. 
	$H \not\subset \Supp \Delta$. 
	Let $\Delta_H:= \Delta|_H$ be an effective $\bQ$-divisor on $H$ which satisfies  
	\[
	(K_X+ \Delta+H)|_H = K_H + \Delta_H
	\]
	(cf. \cite[Proposition 4.5(4)]{MR3057950}). 
	%(This is possible since $H':= H \setminus Z \subset X \setminus Z$ is a smooth divisor for $Z:= Z_L \cup \Sing H$.)
\end{setup}

\begin{remark}
	In certain situations, it would be useful to have a local version of set-up \ref{setup:XLH}. Fix a point $x \in X$. The local set-up differs from the global one just in the facts that we assume $X$ to be $\bQ$-Gorestein only at $x$ and $H$ to be normal only at $x$. 
\end{remark}

%Let $X, L, H$ be as in Lemma \ref{lem:restrictexact}. 
%Assume that $L$ is ample and let $x \in X$ such that 
%\begin{enumerate}
%\item[(i)] $X$ is $\bQ$-Gorenstein at $x$.  
%\item[(ii)] $H$ is normal at $x$ (\xr{or just assume that $H$ is normal}).  
% \end{enumerate}

\begin{lemma}\label{lem:restrictexact}  
	Let $X$ be a normal variety and $L$ be a $\bQ$-Cartier divisor such that $\cO_X(kL)$ is CM for any $k \in \bZ$. 
	Let $H \in |L|$ be an irreducible and reduced divisor such that there exists a closed subset $Z_L \subset X$ with $\codim_{X} Z_L \ge 3$ 
	%and $\codim_{H} (Z_H \cap H) \ge 2$ 
	such that $L$ is Cartier on $X':=X \setminus Z_L$. 
	Let $\iota \colon H':=H\setminus Z_L \hookrightarrow H$ be the open immersion. 
	
	Then for any $k\in \bZ$,
	%\begin{enumerate}		
	%\item\label{i:sequence} 
	the sequence 
	\[
	0 \to \cO_X((k-1)L) \to \cO_X(kL) \to \iota_*  \left( \cO_{X} (kL)|_{H'} \right) \to 0
	\]
	is exact and $\iota_*  \left( \cO_{X} (kL)|_{H'} \right)$ is CM.	
	%\xr{I now realized that giving a linear equivalence class of a Weil divisor is equivalent to giving (an isom class of) a line bundle, so (2) may not have a statement.}
	%\end{enumerate}
\end{lemma}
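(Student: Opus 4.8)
The plan is to prove the statement first on the open subset $X' = X \setminus Z_L$, where $L$ is Cartier, and then transfer it to $X$ by pushing forward along $j \colon X' \hookrightarrow X$, using the Cohen--Macaulay hypothesis together with $\codim_X Z_L \ge 3$ to annihilate the obstruction terms.

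First I would set up the sequence on $X'$. There $\cO_{X'}(L)$ is invertible, and since $X'$ is a dense open of the variety $X$ it is integral, so a section $s_H \in H^0(X, \cO_X(L))$ with divisor $H$ restricts on $X'$ to a non-zero-divisor that cuts out $H' = H \cap X'$ as an effective Cartier divisor with ideal sheaf isomorphic to $\cO_{X'}(-L)$. Twisting the structure sequence $0 \to \cO_{X'}(-L) \to \cO_{X'} \to \cO_{H'} \to 0$ of the Cartier divisor $H'$ by the line bundle $\cO_{X'}(kL)$ gives the exact sequence $0 \to \cO_{X'}((k-1)L) \to \cO_{X'}(kL) \to \cO_{X'}(kL)|_{H'} \to 0$ on $X'$, where, as in the statement, $\cO_{X'}(kL)|_{H'}$ denotes the pushforward to $X'$ of the invertible sheaf $\cO_{X'}(kL) \otimes \cO_{H'}$ on $H'$.

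Next I would record the local cohomology input. Since $X$ is a variety (hence catenary), every point $x \in Z_L$ satisfies $\dim \cO_{X,x} \ge \codim_X Z_L \ge 3$; as $\cO_X(mL)$ is CM this forces $\depth_{\cO_{X,x}} \cO_X(mL)_x = \dim \cO_{X,x} \ge 3$, and therefore $\mathcal{H}^i_{Z_L}(\cO_X(mL)) = 0$ for $i \le 2$ and all $m$ (cf. the standard use of local cohomology in the proof of Proposition \ref{prop:WCIproperties}). By the exact sequence $0 \to \mathcal{H}^0_{Z_L}(\mathcal F) \to \mathcal F \to j_* j^* \mathcal F \to \mathcal{H}^1_{Z_L}(\mathcal F) \to 0$ and the identification $R^1 j_* j^* \mathcal F \cong \mathcal{H}^2_{Z_L}(\mathcal F)$, this yields $j_* \cO_{X'}(mL) = \cO_X(mL)$ and $R^1 j_* \cO_{X'}(mL) = 0$ for all $m$. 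Applying $j_*$ to the sequence of the previous paragraph and using these vanishings (with $m = k-1$ for the $R^1 j_*$ term) produces the exact sequence $0 \to \cO_X((k-1)L) \to \cO_X(kL) \to j_*(\cO_{X'}(kL)|_{H'}) \to 0$; since the square relating $H' \hookrightarrow H$ and $X' \hookrightarrow X$ is Cartesian and $\cO_{X'}(kL) \otimes \cO_{H'} = \cO_X(kL) \otimes \cO_{H'}$, the last term is precisely $\iota_*(\cO_X(kL)|_{H'})$ pushed forward to $X$, which is the asserted sequence.

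Finally, to see that $\mathcal Q := \iota_*(\cO_X(kL)|_{H'})$ is CM, I would argue pointwise. Since $H$ is a prime divisor and $X$ is a variety, $\Supp \mathcal Q = H$ (it contains the dense open $H'$, on which $\mathcal Q$ is a line bundle) and $\dim \cO_{H,x} = \dim \cO_{X,x} - 1$ for every $x \in H$, so $\depth_{\cO_{X,x}} \mathcal Q_x \le \dim \cO_{X,x} - 1$. Conversely, the depth estimate applied to the short exact sequence above, together with the Cohen--Macaulay property of $\cO_X((k-1)L)$ and $\cO_X(kL)$, gives $\depth_{\cO_{X,x}} \mathcal Q_x \ge \min(\dim \cO_{X,x} - 1, \dim \cO_{X,x}) = \dim \cO_{X,x} - 1$. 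Hence equality holds for all $x \in H$, i.e. $\mathcal Q$ is CM. Apart from this last depth bookkeeping and the routine twisting on $X'$, the one point that needs genuine care is the passage in the third paragraph from ``$\cO_X(mL)$ CM and $\codim_X Z_L \ge 3$'' to the vanishing $\mathcal{H}^{\le 2}_{Z_L}(\cO_X(mL)) = 0$, namely checking that the CM property of these rank-one sheaves is indeed strong enough at every point of $Z_L$; I expect no real obstacle beyond this.
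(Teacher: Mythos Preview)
Your argument is correct and follows essentially the same route as the paper: both set up the twisted structure sequence on $X'$ where $L$ is Cartier, push forward along the open immersion, and kill the obstruction via $R^1 j_* \cO_{X'}((k-1)L) \cong \mathcal H^2_{Z_L}(\cO_X((k-1)L)) = 0$ using the CM hypothesis together with $\codim_X Z_L \ge 3$. The only difference is cosmetic: for the CM property of the quotient the paper simply cites \cite[Lemma 10.28]{MR4566297}, whereas you unpack that lemma via the depth inequality $\depth \mathcal Q_x \ge \min(\depth \cO_X((k-1)L)_x - 1,\ \depth \cO_X(kL)_x)$ applied to the short exact sequence.
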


\begin{remark}\label{i:restriction} 
	If $H$ is normal, then there exists a unique $\bQ$-Cartier Weil divisor class $L|_H$ on $H$  
	%(up to $\bZ$-linear equivalence) 
	such that
	\[
	\cO_H\left( k  (L|_H) \right) \cong \iota_*  \left( \cO_{X} (kL)|_{H'} \right).
	\]
\end{remark}

\begin{proof} (cf. \cite[Theorem 2.74]{MR4566297}) 
	%Set $X':= X \setminus Z_H$ and $H':= H \setminus Z_H$.	
	%Proof of Item \eqref{i:sequence}. 
	Since the statement is local, we may assume that $X$ is affine. 
	Since $L$ is Cartier on $X'$, we have an exact sequence 
	\[
	0 \to \cO_{X'}((k-1)L|_{X'}) \to \cO_{X'}(kL|_{X'}) \to \cO_{H'}(kL|_{H'}) \to 0   
	\] 
	and $\cO_{H'}(kL|_{H'}) \cong \cO_{X'}(kL)|_{H'}$. We obtain the required exact sequence by taking $\iota_*$ of the above sequence since $R^1 \iota_* \cO_{X'}((k-1)L|_{X'}) = H^2_Z(X,(k-1)L )= 0$ 
	from the CM-condition (cf. \cite[Theorem 3.8]{MR0224620}, \cite[(10.29.5)]{MR4566297}), where $\iota \colon X' \hookrightarrow X$ denotes the inclusion. 
	
	We also see that $\iota_*  \left( \cO_{X} (kL)|_{H\setminus Z_L} \right)$ is CM by  \cite[Lemma 10.28]{MR4566297}.  
	%	
	%	Proof of Item \eqref{i:restriction}
	%	Since $\cO_X$ and $\cO_X(-H)$ are $S_3$, then  $\cO_H$ is $S_2$.
	%	By our assumption the restriction $kL|_{H'}$ is Cartier, and being $Z_L$ of codimension $\ge 2$ in $H$, we get a unique Weil divisor $kL|_{H}$ taking the closure and the sheaf $\cO_H(kL|_H)$ is $S_2$. 
	%	%The isomorphism $\cO_H(kL|_H) \cong \cO_X(kL)|_H$\xr{I'm not sure in general}  follows as in the proof of \cite[Theorem 2.74]{MR4566297}.
	%	The isomorphism $\cO_X(kL|_H) \cong \iota_*  \left( \cO_{X} (kL)|_{H\setminus Z_L} \right)$ follows from from the fact that the sheaves are $S_2$ and coincide outside a codimension 2 locus.	
\end{proof}

In Setup \ref{setup:XLH}, we also see that $(H, \Delta_H)$ is a projective pair (i.e. $K_H+ \Delta_H$ is $\bQ$-Cartier) 
by the construction. 

Let $Z \subset H$ be a closed subvariety. 
If $(H, \Delta_H)$ is klt near $Z$, then $(X, \Delta)$ is also klt near $Z$ (cf. \cite[Theorem 5.50(1)]{KM98}). 
Hence we have well-defined delta invariants $\delta_Z(X, \Delta; L)$ and $\delta_Z(H, \Delta_H; L|_H)$ which we want to compare.  
  The same definitions introduced in \cite[Section 2]{AZ22} can be given in our set-up:

\begin{definition}\label{setup:VWFM}
	Let $(X, \Delta), L, (H, \Delta_H)$ be as in Setup \ref{setup:XLH}. 
	\begin{enumerate}
		\item[(i)] Let $V_{\bullet}$ be the complete linear series associated to $L$, that is, $V_m:= H^0(X, mL)$. 
		Let $W_{\bullet, \bullet}$ be its refinement by $H$, that is, 
		\[
		W_{m,j}:= \Image \left( H^0(X, mL - jH) \to H^0(H, (mL-jH)|_H) \right). 
		\] 
		Then $W_{\bullet, \bullet}$ is an $\bN^2$-graded linear series associated to $L_1:= L|_H$ and $L_2 := -L|_H$ (see Lemma \ref{lem:restrictexact}).
		\item[(ii)] For $(m,j) \in \bN^2$ such that $W_{m,j} \neq 0$, let 
		\[
		W_{m,j} = M_{m,j} + F_{m,j}
		\]
		be the decomposition to the movable part $M_{m,j}$ and the fixed part $F_{m,j}$. 
		Let 
		\[
		F_m:= F_m(W_{\bullet, \bullet}):= \frac{1}{m h^0(W_{m,\bullet})} \sum_{j=0}^m h^0(W_{m,j}) F_{m,j}, 
		\]
		where $h^0(W_{m,j}) := \dim_{\bC} W_{m,j}$ and $h^0(W_{m,\bullet}):= \sum_{j=0}^m h^0(W_{m,j})$. 
		Let $F(W_{\bullet, \bullet}) := \lim_{m \to \infty} F_m$. 
		%Since $W_{m,j}$ is free, we have $F_{m,j} = F_m= F(W_{\bul, \bul}) = 0$. 
		\item[(iii)](\cite[Definition 2.11]{AZ22}) Let 
		\[
		M(W_{\bul, \bul}):= \{m \in \bZ_{>0} \mid W_{m,j} \neq 0 \text{ for some $0 \le j \le m$} \}, 
		\]  
		and $m \in M(W_{\bul, \bul})$. 
		%$N_m:= \sum_{j=0}^m h^0(W_{m,j})$. 
		We say that a $\bQ$-divisor $D$ is {\it a basis type divisor of $W_{\bul, \bul}$} (resp. {\it an $m$-basis type $\bQ$-divisor of $W_{\bul, \bul}$ }) if 
		\[
		D = \sum_{j=0}^m D_j  \ \ \left( \text{resp. } D= \frac{1}{m h^0(W_{m,\bul})} \sum_{j=0}^m D_j \right) 
		\]
		for some basis type divisors $D_j$ of $W_{m,j}$ for $j=0, \ldots ,m$. 
		
		\item[(iv)] \cite[Definition 2.12]{AZ22} For $m \in M(W_{\bul, \bul})$ and a divisor $E$ over $H$, let 
		\[
		S_m(W_{\bul, \bul}; E):= \sup_D \ord_E (D),  
		\]
		where $D$ runs over all $m$-basis type $\bQ$-divisors $D$ of $W_{\bul, \bul}$. 
		Let $S(W_{\bul, \bul};E):= \lim_{m \to \infty} S_m(W_{\bul, \bul}; E)$. 
		(We can also define $S(W_{\bul, \bul};E)$ by using valuations as in \cite[Corollary 2.13]{AZ22}.)
		\item[(v)] Let $Z \subset H$ be a closed subvariety. 
		%$x \in H$. 
		Assume that $(H, \Delta_H)$ is klt (resp. $(H, \Delta_H)$ is klt near $Z$).  
		%$H$ is klt (resp. $H$ is klt near $x$).  
		Then we define $\delta(W_{\bullet, \bullet})$ (resp. $\delta_Z(W_{\bullet, \bullet})$) as in \cite[Lemma 2.9]{AZ22} by 
		\begin{equation}\label{eq:defndeltaWbulbul}
		\delta(W_{\bul,\bul}):= \inf_E \frac{A_{(H,\Delta_H)} (E)}{S(W_{\bul,\bul}; E)}, \ \  \left( \text{resp. } \delta_Z(W_{\bul,\bul}):= \inf_{E, x \in C_H(E)} \frac{A_{(H,\Delta_H)} (E)}{S(W_{\bul,\bul}; E)} \right),  
		\end{equation}
		where $E$ runs over divisors over $H$ (resp. $E$ runs over divisors over $H$ whose center $C_H(E)$ on $H$ contain $Z$).  
		%and $S(W_{\bul,\bul}; E)$ is the invariant defined in the end of \cite[Definition 2.12]{AZ22}.
	\end{enumerate}
\end{definition}

The above $\bN^2$-graded linear series $W_{\bul, \bul}$ satisfies the following. 

\begin{lemma}\label{lem:W} 
	\begin{enumerate}
		\item[(i)]The graded linear series $W_{\bullet, \bullet}$ contains an ample linear series (\cite[Definition 2.9]{AZ22}). 
		\item[(ii)] We also have $F(W_{\bullet, \bullet})=0$ and $W_{\bullet, \bullet}$ is almost complete (\cite[Definition 2.16]{AZ22}). 
	\end{enumerate}
\end{lemma}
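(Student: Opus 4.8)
The plan is to reduce everything to the complete linear series on $H$, using that $H\in|L|$, and then to run the arguments of \cite{AZ22}; the only genuinely new input is the exact sequence of Lemma~\ref{lem:restrictexact}. Since $H\in|L|$ we have $mL-jH\sim_{\bQ}(m-j)L$, so $W_{m,j}$ is the image of the restriction map $\rho_{m-j}\colon H^0(X,(m-j)L)\to H^0(H,(m-j)(L|_H))$; in particular $W_{m,j}$ depends only on $m-j$, with $W_{m,j}=W_{m-j,0}$. By Lemma~\ref{lem:restrictexact} the cokernel of $\rho_k$ injects into $H^1(X,(k-1)L)$, which vanishes for $k\gg 0$ by Serre vanishing since $L$ is ample. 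Hence there is $N_1$ such that $W_{m,j}=H^0(H,(m-j)(L|_H))$ is the complete linear series whenever $m-j\ge N_1$. Moreover $L|_H$ is an ample $\bQ$-Cartier divisor on $H$: it is $\bQ$-Cartier by Remark~\ref{i:restriction}, and $kL$ is a very ample line bundle on $X$ for $k$ large and divisible, so $\cO_H(k(L|_H))=\cO_X(kL)|_H$ is very ample on $H$.

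For part~(i): for any rational $t\in[0,1)$ and $m$ large one has $m-\lfloor tm\rfloor\ge N_1$, so $W_{m,\lfloor tm\rfloor}$ is the complete linear series of the ample divisor $(m-\lfloor tm\rfloor)(L|_H)$, and these pieces contain an ample series (e.g.\ take $A:=\tfrac12 L|_H$); the boundedly many pieces with $m-j<N_1$ do not affect this. Put differently, $W_{\bullet,\bullet}$ is the refinement by $H$ of the complete — hence ample — series $V_{\bullet}$, and the proof of the corresponding statement in \cite{AZ22} applies once Lemma~\ref{lem:restrictexact} is granted. The same observations show that $W_{m,j}$ coincides with the $(m,j)$-piece of the complete $\bN^2$-graded linear series attached to $(L|_H,-L|_H)$ for $m-j\ge N_1$, whereas the remaining $O(1)$ pieces have $h^0(W_{m,j})=O(1)$, which is negligible against $h^0(W_{m,0})\sim c\,m^{\dim H}$; hence $W_{\bullet,\bullet}$ is almost complete in the sense of \cite[Definition~2.16]{AZ22}.

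For the vanishing $F(W_{\bullet,\bullet})=0$ the key point is a uniform bound $F_{m,j}\le D_1$ by a fixed effective divisor $D_1$ on $H$, independent of $m$ and $j$. When $m-j\ge N_1$ we have $F_{m,j}=\fix|(m-j)(L|_H)|$; since $\fix|(k+k_0)(L|_H)|\le\fix|k(L|_H)|$ whenever $k_0(L|_H)$ is a globally generated line bundle — a general member of $|(k+k_0)(L|_H)|$ factors as a general member of $|k(L|_H)|$ plus a general, hence fixed-part-free, member of $|k_0(L|_H)|$ — the divisors $\fix|k(L|_H)|$ are bounded uniformly in $k$; for the finitely many $m-j<N_1$ with $W_{m,j}\ne 0$, $F_{m,j}$ is bounded by a member of one of finitely many linear systems. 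Granting this bound,
\[
F_m\ \le\ \frac{1}{m\,h^0(W_{m,\bullet})}\sum_{j=0}^m h^0(W_{m,j})\,D_1\ =\ \frac{D_1}{m}\ \longrightarrow\ 0 ,
\]
so $F(W_{\bullet,\bullet})=\lim_{m\to\infty}F_m=0$.

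I expect the only delicate point to be the bookkeeping around the normal but possibly singular divisor $H$ and the reflexive sheaves $\cO_H(k(L|_H))$: one must check that $L|_H$ is genuinely ample (not merely ample on the locus where $L$ is Cartier), that Serre vanishing and the exact sequence are invoked for the CM sheaves $\cO_X(kL)$ rather than for line bundles, and that the fixed-part inequality above is valid for the possibly non-Cartier linear systems $|k(L|_H)|$ on $H$. All of this follows from $\codim_X Z_L\ge 3$ — hence $\codim_H Z_L\ge 2$ — together with Lemma~\ref{lem:restrictexact}; with these in place, (i) and (ii) follow exactly as in \cite{AZ22}.
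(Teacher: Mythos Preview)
Your proof is correct and follows essentially the same approach as the paper: reduce to the complete linear series on $H$ via Serre vanishing and Lemma~\ref{lem:restrictexact}, then verify the definitions from \cite{AZ22}. The one simplification you miss is that for $F(W_{\bullet,\bullet})=0$ the paper enlarges $N_1$ so that $\cO_H(k(L|_H))$ is also globally generated for $k\ge N_1$, whence $F_{m,j}=0$ outright for $m-j\ge N_1$ and the limit vanishes trivially; your uniform-bound argument works but is not needed.
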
 

\begin{proof}
	We can take $N \gg 0$ so that $H^1(X, \cO_X(k-1)L)) =0$ for all $k \ge N$ and $\cO_H(kH|_H)$ is globally generated for $k \ge N$. 
	Then we see that 
	\[
	W_{m,j} = H^0(H, \cO_H((m-j)H|_H))
	\] when $m-j \ge N$.  
	
	%(i) is easy to check from the definition. Indeed, in \cite[Definition 2.9]{AZ22}, we have $\Supp W_{\bul,\bul} = (\bR_{\ge 0})^2$, thus the first and second conditions follow.  
	%In the third conditions, we have $A=(m-j)rL|_H$ and $E=0$ and the condition easily follows.  
	%
	%(ii) also easily follows from the definition since $W_{m,j} = |(m-j) rH|_H|$.  
	
	\noindent(i) By the above, we can check that $W_{\bullet, \bullet}$ contains an ample series. 
	Indeed, we see that $(2N,N) \in \Inte(\Supp W_{\bullet, \bullet}) \subset \bR^2$. 
	We also see that, for $(m,j) \in \Inte (\Supp W_{\bullet, \bullet}) \cap \bN^2$, $W_{(km,kj)} \neq 0$ for $k \gg 0$.  
	For $\bfa_0:= (2N, N) \in \Inte (\Supp W_{\bullet, \bullet}) \cap \bN^2$, we have  $\bfa_0 \cdot (L_1, L_2) = NL|_H$ and $W_{m \bfa_0} = H^0(H, mNL|_H)$. 
	
	\vspace{2mm}
	
	\noindent(ii) We see that $F(W_{\bullet, \bullet}) =0$ since $F_{m,j} =0$ for $m-j \ge N$. 
	
	To show that $W_{\bullet, \bullet}$ is almost complete, we need to verify the condition in \cite[Definition 2.16.2]{AZ22}. We can take $L|_H$ as $L$ in \cite[Definition 2.16.2]{AZ22} and  since $M_{m,j} = |(m-j)L|_H|$ holds for $m-j \ge N$, the conclusion follows. (cf. \cite[Example 2.12]{AZ22}). 
\end{proof}

%We also have analogues of the delta invariants for $W_{\bullet, \bullet}$ as in \cite[Definition 2.12]{AZ22}. 

%Note that $\delta(V_{\bul}) = \delta(L)$. 

\begin{remark}\label{rmk:basistypedivadjunction}
Let $X, L, H$ be as in Setup \ref{setup:XLH}. Let $V_{\bul}$ be the graded linear series induced by $V_m= H^0(X, mL)$ and set $N_m:= h^0(X, mL)$.
Let $\cF_H$ be the filtration induced by $H$,  that is $\cF_H^{\lambda} V_m = \{s \in V_m \mid \ord_H(s) \ge \lambda \}$.
We say that a basis $s_1, \ldots , s_{N_m}$ of $V_m$ is compatible with $\cF_H$ if every $\cF_H^\lambda$ is the span of some $s_i$.
Let $D$ be an $m$-basis type $\bQ$-divisor of $V_{\bul}$ compatible with the filtration $\cF_H$ and 
let $s_1, \ldots , s_{N_m} \in V_m$ be the basis which is compatible with $\cF_H$ and defines $D$ (see \cite[Definition 2.11]{AZ22}).   

Then we have  
\[
D= \frac{1}{mN_m} \sum_{j=0}^m D'_j \text{, where $D'_j := \sum_{\ord_H(s_i) = j} (s_i=0)$}. 
\]
The compatibility implies that 
\[
\left| \{i \mid \ord_H(s_i) =j \} \right| = h^0(X, mL-jH) - h^0(X, mL-(j+1)H). 
\]
% since the basis $\{s_i \}$ is compatible with $\cF_H$. 
 By this and the definition of $S_m(V_{\bul}; H)$ (cf. \cite[Proposition 3.1]{AZ22}), we have $D = S_m(V_{\bul}; H) H + \Gamma$ for 
an effective $\bQ$-divisor $\Gamma$ of the form 
\[
\Gamma = \frac{1}{mN_m} \sum_{j=0}^m D_j \text{, where  $D_j = \sum_{i, \ord_H(s_i) = j} \left( (s_i=0) - jH \right)$}. 
\] 
Note that $D_j|_H$ is a basis type divisor of $W_{m,j}$, thus $\Gamma|_H$ is an $m$-basis type $\bQ$-divisor of $W_{\bul, \bul}$. 
\end{remark}

We have the following inequality on adjunction for delta invariants (see \cite[Theorem 3.2 and Lemma 4.3]{AZ22}).

\begin{theorem}\label{thm:AZadjunctionineq} 
	Let $(X, \Delta), L, (H, \Delta_H)$ and $W_{\bul, \bul}$ be as in Setup \ref{setup:XLH} and Definition \ref{setup:VWFM}. Let $Z \subset H$ be a closed subvariety. 
	 %and $Z_0 \subset Z \cap H$ be an irreducible component of $Z \cap H$. 
	Assume that $(H, \Delta_H)$ %(and $(X, \Delta)$) 
	is klt near $Z$. 
	Then we have 
	%be a normal projective variety of dimension $n$ and $L$ be an ample $\bQ$-Cartier divisor. 
	%Let $H \in |L|$ be an irreducible and reduced divisor such that $L$ is Cartier in codimension 2 along $H$. 
	\begin{equation}\label{eq:AZineq3.2}
	\delta_Z(L) \ge \min \left\{ \frac{1}{S(L; H)}, \delta_Z( W_{\bul, \bul})  \right\} =: \lambda. 
	\end{equation}
	As a consequence, we obtain 
	\begin{equation}\label{eq:AZineq4.3}
	\delta_Z(L) \ge \min \left\{n+1, \frac{n+1}{n} \delta_Z(L|_H)  \right\}. 
	\end{equation}
	
If equality holds in Equation \eqref{eq:AZineq4.3}, then either $\delta_Z(L)=n+1$ and it is computed by $H$ or $\delta_Z(L)=\frac{n+1}{n} \delta_Z(L|_H)$ and $C_X(v) \not \subset H$ for any valuation $v$ that computes $\delta_Z(L)$. Moreover, in the latter case, for every irreducible component $S$ of $C_X(v) \cap H$ containing $Z$, there exists a valuation $v_0$ on $H$ with centre $S$ computing $\delta_Z(L|_H)$.
\end{theorem}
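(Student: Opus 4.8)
The plan is to follow \cite[Theorem~3.2 and Lemma~4.3]{AZ22}, verifying that their arguments only use the normality of $H$ together with Cohen--Macaulayness, and then to obtain the equality clause by tracing where each inequality becomes sharp. Throughout, $n=\dim X$, so $\dim H=n-1$. The first thing to record is that, by Setup \ref{setup:XLH}, Lemma \ref{lem:restrictexact} and Remark \ref{i:restriction}, the class $L|_H$ is a well-defined $\bQ$-Cartier Weil divisor class on $H$ and the refinement $W_{\bul,\bul}$ of Definition \ref{setup:VWFM} is set up just as in \cite{AZ22}: although $L$ need not be Cartier, it is Cartier off $Z_L$ with $\codim_X Z_L\ge 3$, so $Z_L\cap H$ has codimension $\ge 2$ in $H$, and $\cO_X(kL)$ is CM --- which is precisely what the restriction exact sequences require.

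To prove \eqref{eq:AZineq3.2} I would fix $0<t<\lambda$ and, for $m\gg 0$, an $m$-basis type $\bQ$-divisor $D$ of $H^0(X,\bul L)$; following the proof of \cite[Theorem~3.2]{AZ22} one reduces to a $D$ defined by a basis compatible with the filtration $\cF_H$, so that Remark \ref{rmk:basistypedivadjunction} gives $D=S_m(V_\bul;H)\,H+\Gamma$ with $\Gamma$ effective, $H\not\subset\Supp\Gamma$, $\Gamma|_H$ an $m$-basis type $\bQ$-divisor of $W_{\bul,\bul}$, and $(K_X+\Delta+t\Gamma+H)|_H=K_H+\Delta_H+t\,\Gamma|_H$. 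Since $S_m(V_\bul;H)\to S(L;H)$ and $t\le\lambda\le 1/S(L;H)$, the coefficient $t\,S_m(V_\bul;H)$ of $H$ is $\le 1$ for $m\gg 0$; since $t<\lambda\le\delta_Z(W_{\bul,\bul})$, the basis-type description of $\delta_Z(W_{\bul,\bul})$ (cf.\ \cite[Lemma~2.9]{AZ22}) makes $(H,\Delta_H+t\,\Gamma|_H)$ lc near $Z$ for $m\gg 0$. Inversion of adjunction (see \cite[Theorem~5.50]{KM98}), using that $H$ is normal, then forces $(X,\Delta+t\Gamma+H)$ --- hence $(X,\Delta+tD)$ --- to be lc near $Z$; letting $m\to\infty$ and then $t\to\lambda$ yields \eqref{eq:AZineq3.2}.

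For \eqref{eq:AZineq4.3} I would compute the two terms of $\lambda$. Since $L$ is ample and $H\in|L|$, we have $\vol(L-xH)=(1-x)^n\vol(L)$ for $x\in[0,1]$ and $0$ beyond, so $S(L;H)=\tfrac1{n+1}$ and $1/S(L;H)=n+1$, while $A_{(X,\Delta)}(H)=1$ because $\lfloor\Delta\rfloor=0$ and $H\not\subset\Supp\Delta$; and by Lemma \ref{lem:W}, $F(W_{\bul,\bul})=0$ and $W_{m,j}=H^0(H,(m-j)(L|_H))$ for $m-j\gg 0$, so the double-sum computation of \cite[Lemma~4.3]{AZ22} gives $S(W_{\bul,\bul};E)=\tfrac n{n+1}S(L|_H;E)$ for every divisor $E$ over $H$, hence $\delta_Z(W_{\bul,\bul})=\tfrac{n+1}n\delta_Z(L|_H)$; substituting into \eqref{eq:AZineq3.2} gives \eqref{eq:AZineq4.3}. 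Now assume equality in \eqref{eq:AZineq4.3}. As $\ord_H$ is admissible in \eqref{eq:defndeltaZ} (since $Z\subseteq H$) with ratio $A_{(X,\Delta)}(H)/S(L;H)=n+1$, one always has $\delta_Z(L)\le n+1$, so if $\delta_Z(L)=n+1$ then $\ord_H$ computes $\delta_Z(L)$ --- the first alternative. Otherwise $\delta_Z(L)<n+1$, forcing $\delta_Z(L)=\tfrac{n+1}n\delta_Z(L|_H)=\delta_Z(W_{\bul,\bul})=:\lambda<1/S(L;H)$. Let $v$ be a valuation computing $\delta_Z(L)=\lambda$, so $Z\subseteq C_X(v)$ and $A_{(X,\Delta)}(v)=\lambda\,S(L;v)$. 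Here the plan is to use a sharpened form of the estimate behind \eqref{eq:AZineq3.2}: when $C_X(v)\subseteq H$ the bound improves to $A_{(X,\Delta)}(v)/S(L;v)\ge 1/S(L;H)=n+1$ (the $H$-term becomes the binding one), contradicting $\lambda<n+1$; hence $C_X(v)\not\subseteq H$ for every computing $v$. Finally, for an irreducible component $S\ni Z$ of $C_X(v)\cap H$, take $v_0$ to be the restriction of $v$ to $H$ along a component, lying over $S$, of the intersection of the prime divisor extracting $v$ with the strict transform of $H$ (well-defined since $H$ is normal); tracking equality through the bound $S(L;v)\le(\text{term in }S(L;H))+(\text{term in }S(W_{\bul,\bul};v_0))$ underlying \eqref{eq:AZineq3.2}, and noting the first term drops since $C_X(v)\not\subseteq H$, one gets $A_H(v_0)/S(W_{\bul,\bul};v_0)=\lambda=\delta_Z(W_{\bul,\bul})$, i.e.\ $A_H(v_0)/S(L|_H;v_0)=\delta_Z(L|_H)$, so $v_0$ computes $\delta_Z(L|_H)$.

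Everything up to the equality clause is essentially a transcription of \cite[Theorem~3.2 and Lemma~4.3]{AZ22} into the Cohen--Macaulay / normal set-up prepared above. The hard part will be the last step: making precise the two refinements of the Abban--Zhuang inequality --- that a valuation with centre inside $H$ satisfies $A/S\ge 1/S(L;H)$, and that a valuation computing $\delta_Z(L)$ restricts, along each component of $C_X(v)\cap H$ through $Z$, to a valuation computing $\delta_Z(L|_H)$. Both require re-running the filtered Okounkov-body argument of \cite[Theorem~3.2]{AZ22} and keeping careful track of which of the two competing bounds $1/S(L;H)$ and $\delta_Z(W_{\bul,\bul})$ is actually attained by $v$; this is the bookkeeping that needs the most care.
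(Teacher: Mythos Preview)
Your proposal is correct and follows essentially the same route as the paper: prove \eqref{eq:AZineq3.2} by the $m$-basis-type/inversion-of-adjunction argument of \cite[Theorem~3.2]{AZ22} (the paper phrases it with $\lambda_m$ at each level rather than fixing $t<\lambda$, but this is cosmetic), then deduce \eqref{eq:AZineq4.3} by computing $S(L;H)=\tfrac1{n+1}$ and $S(W_{\bul,\bul};E)=\tfrac n{n+1}S(L|_H;E)$. For the equality clause the paper records, as a byproduct of the first step, the pointwise inequality $A_{(X,\Delta)}(\nu)\ge \lambda S(L;\nu)+(1-\lambda S(L;H))\,\nu(H)$ for every admissible $\nu$; plugging in a minimizer $v$ with $\lambda<1/S(L;H)$ gives $v(H)=0$ directly, which is a cleaner way to reach $C_X(v)\not\subset H$ than your ``bound improves to $1/S(L;H)$'' formulation, and the paper then cites \cite[Theorem~A]{MR3060755} together with \cite[Theorem~3.2]{AZ22} for the restricted valuation $v_0$ --- exactly the mechanism you sketch.
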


\begin{proof} 	
	The proof of \cite[Theorem 3.2]{AZ22} works in our set-up to show inequality (\ref{eq:AZineq3.2}). 
	In \cite[Theorem 3.2]{AZ22}, the divisor $F$ used to refine $V_\bul$ and obtain $W_{\bul, \bul}$ is assumed to be either Cartier or of plt type. 
	In our case the restriction of $L$ to $H$ behaves well thanks to Lemma \ref{lem:restrictexact} and Remark \ref{i:restriction}. 
	Also note that the adjunction formula works without a different under our assumptions. 
	It is enough to show the following claim. 

\begin{claim}\label{claim:delta_minequality}
For $m \in \bZ_{>0}$, we have 
\[
\delta_{Z,m} (L) \ge \min \left\{ \frac{1}{S_m(L; H)}, \delta_{Z,m} (W_{\bul, \bul}) \right\}=:\lambda_m.  
\]
\end{claim}

\begin{proof}[Proof of Claim]
Let $D$ be an $m$-basis type $\bQ$-divisor of $V_{\bul}$ compatible with $\cF_H$ as in Remark \ref{rmk:basistypedivadjunction} 
with the decomposition 
$
D= S_m(V_{\bul}; H) H + \Gamma 
$
such that $H \not\subset \Supp \Gamma$ and $\Gamma|_H$ is an $m$-basis type divisor of $W_{\bul, \bul}$.  
Then we have 
\begin{equation}\label{eq:lambda_mD}
K_X+ \lambda_m D = K_X + \lambda_m S_m(L; H) H + \lambda_m \Gamma,  
\end{equation}
where $\lambda_m S_m(V_{\bul}; H) \le 1$ by the definition of $\lambda_m$. 
We know that $(H, \lambda_m (\Gamma|_H))$ is lc near $Z$ by the definition of $\lambda_m$ (or $\delta_{Z,m}(W_{\bul, \bul})$). 
By the inversion of adjunction, we see that $(X, H+ \lambda_m \Gamma)$ is also lc near $Z$. 
Then, by (\ref{eq:lambda_mD}), we see that $(X, \lambda_m D)$ is lc near $Z$, thus we obtain the claim. 
\end{proof}

By letting $m \to \infty$, we obtain (\ref{eq:AZineq3.2}) from Claim \ref{claim:delta_minequality}. 
By the proof of Claim \ref{claim:delta_minequality}, we also obtain  
\begin{equation}\label{eq:A_Xnuineq}
A_X(\nu) \ge \lambda S(L; \nu) + (1- \lambda S(L;H)) \nu(H)  
\end{equation}
for all valuations $\nu$ on $X$ whose center contain $Z$. 
	
\smallskip 
	
	The proof of (\ref{eq:AZineq4.3}) is parallel to that of \cite[Lemma 4.3]{AZ22}, we explain the main steps to deduce (\ref{eq:AZineq4.3}) from (\ref{eq:AZineq3.2}) in our set-up.
	
	By definition 
	\begin{equation}
	S(L;H) = \frac{1}{\vol(L)} \int_0^{\infty} \vol (L; \nu_H \ge t) dt = \frac{1}{n+1}.
	%= \frac{1}{\vol(L)} \int_0^{\infty} \vol ((1-t)L)  dt  = \int_0^{\infty} (1-t)^n dt  
	\end{equation}
	Then the 1st term in the R.H.S. of (\ref{eq:AZineq3.2}) is $\dps\frac{1}{S(L;H)} = n+1$. 
	Hence (\ref{eq:AZineq4.3}) is reduced to show the following claim. 
	
	\begin{claim}
		We have $\dps{\delta_Z(W_{\bul, \bul}) = \frac{n+1}{n} \delta_Z(L|_H)}$. 
	\end{claim}
	
	\begin{proof}[Proof of Claim]
		The proof makes use of the first Chern class $c_1(W_{\bul, \bul})$ which is defined in  \cite[Lemma-Definition 2.14]{AZ22} .
		As explained in the beginning of the subsection \cite[3.1]{AZ22}, we have
		\[
		c_1(W_{\bullet, \bullet}) = \left( c_1(V_{\bullet}) - S(V_{\bullet}; H) H \right)|_H \sim_{\bQ} \frac{n}{n+1} L|_H
		\]
		since $c_1(V_{\bullet}) = L$ and $S(V_{\bullet}; H) = S(L;H) =  \dps{\frac{1}{n+1}}$. 
		By Lemma \ref{lem:W}, we can apply \cite[Lemmas 2.11, 2.13]{AZ22} 
		to get
		\[
		S(W_{\bullet, \bullet};E ) = \frac{n}{n+1}S(L|_H; E)
		\] 
		for divisors $E$ over $H$ whose center contains $Z$. (Indeed, we can consider an $\bN^2$-graded linear series 
		$W'_{\bul, \bul}$ defined by $W'_{m,j} = W_{km, kj}$ for $k \in \bZ_{>0}$ such that $kL$ is Cartier and can reduce to the situations where 
		we can apply \cite[Lemma 2.13]{AZ22} stated for big line bundles.)
		
		By this, we obtain the required equality by comparing the definitions (\ref{eq:defndeltaZ}) and (\ref{eq:defndeltaWbulbul}) of the delta invariants. 
		%\[
		% \delta_x(W_{\bullet, \bullet}) = \frac{n+1}{n} \delta_x(L|_H) 
		%\]
		%by the description of $\delta_x$ as in the proof of \cite[Proposition 3.1]{AZ22}. 
	\end{proof}
	
The statement about equality follows as in \cite[Theorem 3.2]{AZ22} based on the inequality (\ref{eq:A_Xnuineq}) and \cite[Theorem A]{MR3060755}.  
%\xb{Not sure this is satisfying}.

	%In the setting of \cite[Theorem 3.2]{AZ22}, we let $\Delta =0$, $L_i=L$, $V_{\bullet}$ be the above linear series, $Z=\{x \}$, $F=H$. 
	%Then we have $\Delta_F = 0$ since $H$ is Cartier in codimension 1 on $H$. 
	%The proof of \cite[Theorem 3.2]{AZ22} also works in this setting and we obtain 
	%\[
	%\delta_x(L)= \delta_x(X; V_{\bullet}) \ge \min \left\{ n+1, \delta_x(H; W_{\bullet, \bullet}) \right\}= \min \left\{ n+1, \frac{n+1}{n}\delta_x(L|_H) \right\}, 
	%\]
	%thus obtain the required inequality. 
\end{proof}

%\xr{For the non-smooth case we would need the following lemma when $L$ is just $\Q$-Cartier}

\begin{lemma}\label{l:curve} (cf. \cite[Lemma 4.4]{AZ22}) 
	Let $X$ be a normal $\bQ$-Gorenstein  projective variety and $L$ be an ample $\bQ$-Cartier divisor such that $\cO_X(kL)$ is CM for $k \in \bZ$. 
	%\xr{changed the setting a bit. In practice, for a WCI $X$, we may have $\cO_X(1)$ which is Cartier in codimension 1, 
	%but $\cO_X(a_n)$ is Cartier in codimension $2$. So I changed the setting correspondingly. }
	%be a normal $\bQ$-Gorenstein variety of dimension $n$ and $L$ an ample $\bQ$-Cartier divisor on $X$. 
	Let $n:= \dim X$ and $x \in X$ be a smooth point.  
	%and assume that $\cO_X(kL)$ is CM for all $k \in \bZ$. 
	Assume that there exist $m_i \in \bZ_{>0}$  and $H_i \in |m_i L|$ containing $x$ for $i=1, \ldots, n-1$ with the following:  
	\begin{enumerate}\label{eq:condition*}
		\item[(i)] $\Gamma_j:= H_1 \cap \cdots \cap H_j$ is normal 
		%irreducible and reduced 
		for $j=1, \ldots , n-1$.  
		%\xr{this assumption implies (iii). Maybe it's no problem for later use?} \xb{Right, maybe we can erase (iii) and write here the following?} 
		In particular, $\Gamma_{n-1}= \bigcap_{i=1}^{n-1} H_i$ is a smooth curve.
		\item[(ii)] $Z_{H_{j}} \cap \Gamma_{j-1} \subset \Gamma_{j-1}$ has codimension $ \ge 2$ for $j=1,\ldots, n-1$, where $\Gamma_0:=X$.  
		%$L|_{Z_j}$ is Cartier in codimension $2$ and $\cO_{Z_j}(kL|_{Z_j})$ is CM for $j=1, \ldots , n-1$ and $k \in \bZ$. \xr{to be checked}
		%\item[(iii)] $\Gamma_{n-1}= \bigcap_{i=1}^{n-1} H_i$ is an integral curve smooth at $x$.  
		%(this implies that $\Gamma_j$ is smooth at $x$ for all $j$). 
	\end{enumerate} 
	Then we have 
	$$
	\delta_x(L) \ge \frac{n+1}{m_1 \cdots m_{n-1}L^n}.
	$$
	If equality holds, then either $\delta_x(L)=n+1$ or every valuation that computes $\delta_x(L)$ is divisorial and induced by a prime divisor $E$ on $X$.
\end{lemma}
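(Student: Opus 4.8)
This is the singular analogue of \cite[Lemma~4.4]{AZ22}, and the plan is to follow the same scheme: run the adjunction inequality of Theorem~\ref{thm:AZadjunctionineq} down the flag $X=\Gamma_0\supset\Gamma_1\supset\cdots\supset\Gamma_{n-1}=:C$, peeling off one divisor $H_j$ at a time together with the induced refinements of the linear series, and then evaluate the resulting local delta invariant on the smooth curve $C$. Concretely, at the $j$-th step I would work on $\Gamma_{j-1}$, of dimension $n-j+1$, with the ample $\bQ$-Cartier divisor $m_j\bigl(L|_{\Gamma_{j-1}}\bigr)$ and the prime divisor $H_j\cap\Gamma_{j-1}\in\lvert m_jL|_{\Gamma_{j-1}}\rvert$. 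First one checks that the (local, near $x$) version of Setup~\ref{setup:XLH} applies: $\Gamma_{j-1}$ is normal by (i) and smooth at $x$ because $x$ is a smooth point of $X$ (so the $H_i$ are Cartier near $x$ and, by (i), $C$ is a smooth curve through $x$); the sheaves $\cO_{\Gamma_{j-1}}\bigl(k\,m_jL|_{\Gamma_{j-1}}\bigr)$ are CM by iterating Lemma~\ref{lem:restrictexact}, whose codimension hypothesis on the non-Cartier loci is supplied by condition (ii); and $H_j\cap\Gamma_{j-1}$ is normal by (i). Iterating inequality~\eqref{eq:AZineq3.2} along the flag---at the $j$-th stage the refinement $W^{(j-1)}_{\bullet,\bullet}$ has first Chern class $\tfrac{n-j+2}{n+1}L|_{\Gamma_{j-1}}$, so that by almost-completeness (Lemma~\ref{lem:W}) $S\bigl(W^{(j-1)}_{\bullet,\bullet};\ord_{H_j\cap\Gamma_{j-1}}\bigr)=\tfrac1{m_j(n+1)}$---I would arrive at
\[
\delta_x(X,L)\ \ge\ \min\bigl\{\,(n+1)m_1,\ \ldots,\ (n+1)m_{n-1},\ \delta_x\bigl(C,W^{(n-1)}_{\bullet,\bullet}\bigr)\,\bigr\}.
\]

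On $C$ the induced series $W^{(n-1)}_{\bullet,\bullet}$ has first Chern class $\tfrac2{n+1}L|_C$, and since every divisorial valuation on a smooth curve is the order of vanishing at a point, replacing $W^{(n-1)}_{\bullet,\bullet}$ by its first Chern class (again by almost-completeness) gives $\delta_x\bigl(C,W^{(n-1)}_{\bullet,\bullet}\bigr)=(n+1)/\deg_C(L|_C)$ with $\deg_C(L|_C)=L\cdot H_1\cdots H_{n-1}=m_1\cdots m_{n-1}L^n$. It then remains to check that the terms $(n+1)m_i$ do not lower the minimum, i.e. that $m_i\cdot m_1\cdots m_{n-1}L^n\ge 1$ for every $i$. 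This holds because $m_i\cdot m_1\cdots m_{n-1}L^n=\deg_C\bigl(m_iL|_C\bigr)$ is the degree of an ample line bundle on the smooth curve $C$, hence a positive integer: by (ii) the divisor $H_i$, and therefore $m_iL$ for a suitable representative, is Cartier along $C$ off a finite set, and a reflexive rank-one sheaf on a smooth curve is invertible. Thus $\delta_x(X,L)\ge(n+1)/(m_1\cdots m_{n-1}L^n)$.

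For the equality statement I would propagate the equality clause of Theorem~\ref{thm:AZadjunctionineq} step by step down the flag. If $\delta_x(L)=n+1$ we are in the first alternative; otherwise $\delta_x(L)=(n+1)/(m_1\cdots m_{n-1}L^n)$ must be the minimum of the displayed set, and at each level either the bound is already realized by the peeled divisor $H_j$---which is a prime divisor on $X$ computing $\delta_x(L)$---or the clause forces its second alternative, so that any valuation $v$ computing $\delta_x(L)$ has centre not contained in $H_1$, restricts to a valuation on $\Gamma_1$ computing the corresponding invariant, and so on, bottoming out at the divisorial valuation $\ord_x$ on $C$; unwinding this chain then shows that $v$ is itself divisorial and induced by a prime divisor on $X$. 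The step I expect to be the main obstacle is precisely this equality analysis: one must check that the equality clause of Theorem~\ref{thm:AZadjunctionineq}, stated there for refining a complete linear series, still applies to the iterated refinements $W^{(j)}_{\bullet,\bullet}$, and that the centres of the candidate minimizers restrict well enough for the unwinding to produce a genuine prime divisor on $X$ rather than merely a divisor over $X$, paying attention to the boundary cases where several of the numbers $(n+1)m_i$ and $(n+1)/\deg_C(L|_C)$ coincide.
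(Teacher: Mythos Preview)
Your plan is correct and close to the paper's, but the paper organises the argument more economically by \emph{induction on $n$} rather than by unrolling the whole flag at once. At each step the paper applies Theorem~\ref{thm:AZadjunctionineq} in the packaged form~\eqref{eq:AZineq4.3} to the \emph{complete} linear series of $L|_{H_1}$ on $H_1$ (after checking that $H_1$, with $L|_{H_1}$ and the divisors $H_2|_{H_1},\dots,H_{n-1}|_{H_1}$, again satisfies the hypotheses of the lemma), obtaining $\delta_x(m_1L)\ge\min\{n+1,\tfrac{n+1}{n}\delta_x(m_1L|_{H_1})\}$ and then invoking the inductive bound $\delta_x(L|_{H_1})\ge n/\bigl(m_2\cdots m_{n-1}(L|_{H_1})^{n-1}\bigr)=n/(m_1\cdots m_{n-1}L^n)$. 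This sidesteps exactly the obstacle you flag: since each stage starts afresh with a complete linear series on $\Gamma_{j-1}$, one never needs to extend Theorem~\ref{thm:AZadjunctionineq} (or Lemma~\ref{lem:W}) to the iterated multigraded refinements $W^{(j)}_{\bullet,\bullet}$.

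The equality analysis is likewise simpler inductively: if $\delta_x(L)\neq n+1$ and $v$ computes $\delta_x(L)$, the equality clause of Theorem~\ref{thm:AZadjunctionineq} gives $C_X(v)\not\subset H_1$ together with a valuation $v_1$ on $H_1$, centred on a component of $C_X(v)\cap H_1$ through $x$, computing $\delta_x(L|_{H_1})$; by the inductive hypothesis $v_1$ is induced by a prime divisor on $H_1$, hence $\dim C_X(v)\ge \dim C_{H_1}(v_1)+1=n-1$ and $v$ is induced by a prime divisor on $X$. Your ``unwinding'' reaches the same conclusion but has to justify the restriction of $v$ through each $W^{(j)}$, which the inductive phrasing avoids. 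Finally, your observation that one must check $(n+1)m_i\ge (n+1)/(m_1\cdots m_{n-1}L^n)$, i.e.\ that $m_i\,m_1\cdots m_{n-1}L^n=\deg_C(m_iL|_C)\in\bZ_{>0}$ because $C$ is a smooth curve, is a point the paper leaves implicit; it is a genuine (and correct) addition on your part.
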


\begin{proof}
	%\red{It would be good to add a proof.}
	We follow the proof of \cite[Lemma 4.4]{AZ22}. We show the statement by induction on $n$. 
	If $n=1$, then the proof is same as \cite[Lemma 4.4]{AZ22}.
	
	Assume that the statement has been proved in dimension $\le n-1$. 
	%Let $H_1 \in |m_1 L|$ be a general divisor such that $x \in H_1$. 
	%By (\ref{eq:condition*}), 
	Since $\Gamma_j \subset \Gamma_{j-1}$ is a Cartier divisor at $x$ for $j \ge 1$ and $\Gamma_{n-1}$ is smooth at $x$,  
	we see that $\Gamma_j$ is smooth at $x$ for $j=n-2, \ldots , 1$, thus $H_1$ is smooth at $x$. 
	We see that $H_1$ is $\bQ$-Gorenstein, $L|_{H_1}$ is ample $\bQ$-Cartier, $\cO_{H_1}(kL|_{H_1})$ is CM for $k \in \bZ$ by Lemma \ref{lem:restrictexact}.      
	We also see that $H_1 \cap H_j \in |m_j L|_{H_1}|$ for $j=2, \ldots , n-1$    
	satisfy the conditions (i), (ii). 
	By the induction hypothesis, we obtain 
	\[
	\delta_x(L|_{H_1}) \ge \frac{n}{m_2 \cdots m_{n-1} (L|_{H_1})^{n-1} } = \frac{n}{m_1 \cdots m_{n-1} L^n}. 
	\]
	By Theorem \ref{thm:AZadjunctionineq}, we have %\cite[Lemma 4.3]{AZ22},
	\[
	\delta_x(m_1L) \ge \min \left\{ n+1, \frac{n+1}{n} \delta_x(m_1L|_{H_1}) \right\}
	\] 
	and obtain the required inequality recalling that $\delta_x(L)=m\delta_x(mL)$.

%\xb{To be checked} 
Assume now that an equality holds and $\delta_x(L) \ne n+1$. Let $v$ be a valuation computing $\delta_x(L)$. 
By Theorem \ref{thm:AZadjunctionineq}, we see that $C_X(v) \not \subset H_1$ and $\delta_x(L|_{H_1})=\frac{n}{m_1 \cdots m_{n-1} L^n}$ is computed by some valuation $v_1$ on $H_1$ with the centre $Z \subset C_X(v) \cap H_1$.  By induction $v_1$ is divisorial and induced by a prime divisor on $H_1$, which implies that $C_X(v)$ is a divisor on $X$.

\end{proof}

\section{Main results} \label{s:results}

In this section we prove the main result Theorem \ref{thm:main_intro} (= Theorem \ref{thm:main}) and its corollaries.

\begin{remark}
	Let $X \subset \bP(a_0, \ldots , a_{n+1})$ be a weighted hypersurface with the defining polynomial $F$ of degree $d$
	and let $P_r = [0,\ldots , 0, 1, 0, \ldots , 0]$ be the $r$-th coordinate point. 
	We repeatedly use the fact that $P_r \notin X$ if and only if $F$ contains the monomial $z_r^{d/a_r}$ with a non-zero coefficient.    
\end{remark}

\begin{theorem}\label{thm:main}
	Let $X=X_d \subset \bP(a_0, \ldots, a_{n+1})$ be a well-formed quasi-smooth weighted hypersurface which is not a linear cone. 
	Assume that there is $r$  such that $a_r >1$ and $a_r | d$.
	Then 
	$$
	\delta(\cO_X(1)) \ge \frac{(n+1)a_{r}}{d}.
	$$
	
	Moreover, if  $X$ is Fano of index $\iota_X: = \sum_{i=0}^{n+1} a_i -d$ and   $\frac{(n+1)a_{r}}{d} \ge \iota_X$, then $\delta(-K_X) \ge 1$ and if $n \ge 3$, then $X$ is K-stable. 
\end{theorem}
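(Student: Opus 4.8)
The plan is to bound the local stability thresholds $\delta_p(\cO_X(1))$ at every closed point $p\in X$ and then use that $\delta(\cO_X(1))=\inf_p\delta_p(\cO_X(1))$, with the infimum attained (e.g.\ \cite[Theorem E]{MR4067358}). First I would normalise the set-up: by Lemma~\ref{lem:P_r} we may assume $P_r\notin X$; moreover $\Sing\bP$ lies in the common zero locus of the unit-weight coordinates, so $\Sing\bP\cap X\subseteq Bs|\cO_X(1)|$, every $p\notin Bs|\cO_X(1)|$ is a smooth point of $X$, and $\cO_X(k)$ is CM for all $k$ by Proposition~\ref{prop:WCIproperties}. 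If $n=1$ then $X$ is a smooth curve and Lemma~\ref{l:curve} (with no auxiliary divisors) already gives $\delta_p(\cO_X(1))\ge\frac{2\prod_i a_i}{d}\ge\frac{2a_r}{d}$, so assume $n\ge2$. For $p\notin Bs|\cO_X(1)|$ with $c_1\ge2$ — so that after reordering $a_0=a_1=1$ — quasi-smoothness gives $\NQS(X)=\emptyset$, hence hypotheses (a)--(c) of Lemma~\ref{l:cuttingcurve} hold with $M=\emptyset$, producing divisors $H_j\in|\cO_X(a_j)|$ through $p$ for $j\in J:=\{2,\dots,n+1\}\setminus\{r\}$ fulfilling the requirements of Lemma~\ref{l:curve} with $L=\cO_X(1)$, $m_j=a_j$. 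Since $\cO_X(1)^n=d/\prod_{i=0}^{n+1}a_i$ and $\{0,\dots,n+1\}\setminus J=\{0,1,r\}$, Lemma~\ref{l:curve} yields
$$\delta_p(\cO_X(1))\ \ge\ \frac{n+1}{\bigl(\textstyle\prod_{j\in J}a_j\bigr)\,\cO_X(1)^n}\ =\ \frac{(n+1)a_0a_1a_r}{d}\ =\ \frac{(n+1)a_r}{d}.$$

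The remaining points — those in $Bs|\cO_X(1)|$, and all of $X$ when $c_1\le1$ — are handled by passing to a finite cover. I would choose an index $s\ne r$ with $p_s\ne0$ and $a_s>1$ (possible since $p\ne P_r$, and no unit-weight coordinate of $p$ is nonzero when $p\in Bs|\cO_X(1)|$), then, after an automorphism of $\bP$ fixing $p$ together with a Bertini argument in the spirit of Lemma~\ref{l:NQSnonwellformed}, arrange that $\NQS\bigl(X\cap(z_s=0)\bigr)\subseteq Bs|\cO_X(1)|$ away from finitely many points not through $p$. Let $\pi\colon Y\to X$ be the degree-$a_s$ cyclic cover given by the substitution $z_s=t^{a_s}$: then $Y=Y_d$ is a well-formed weighted hypersurface in the weighted projective space with weights $(a_0,\dots,a_{s-1},1,a_{s+1},\dots,a_{n+1})$, not a linear cone, with $a_r\mid d$ and $c_1(Y)=c_1(X)+1$, its non-quasi-smooth locus maps onto $\NQS(X\cap(z_s=0))$ and so lies in $Bs|\cO_Y(1)|$ up to finitely many points, the preimage $q$ of $p$ has $t(q)\ne0$ hence $q\notin Bs|\cO_Y(1)|$, $\pi$ is \'etale near $p$, and $K_Y=\pi^*\!\bigl(K_X+\tfrac{a_s-1}{a_s}(z_s=0)|_X\bigr)$ with boundary avoiding $p$. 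Iterating this until $c_1\ge2$, the first paragraph applied to $(Y,q)$ gives $\delta_q(\cO_Y(1))\ge\frac{(n+1)a_r}{d}$, and since $\pi^*\cO_X(1)=\cO_Y(1)$, Proposition~\ref{prop:deltafinitecover} gives $\delta_q(\cO_Y(1))\le\delta_p\bigl(X,\tfrac{a_s-1}{a_s}(z_s=0)|_X;\cO_X(1)\bigr)=\delta_p(\cO_X(1))$, the last equality because the boundary is trivial near $p$. This proves $\delta(\cO_X(1))\ge\frac{(n+1)a_r}{d}$.

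For the Fano part, $\frac{(n+1)a_r}{d}\ge\iota_X$ and $-K_X=\cO_X(\iota_X)$ give $\delta(-K_X)=\tfrac1{\iota_X}\delta(\cO_X(1))\ge1$, hence K-semistability (Theorem~\ref{thm:delta_stability}). For K-stability when $n\ge3$ I would argue by contradiction: if $\delta(-K_X)=1$, it is attained at some closed $p$ and, by \cite{LXZ22}, by a divisorial valuation; then $\delta_p(\cO_X(1))=\iota_X$, so equality holds in the Lemma~\ref{l:curve}-bound proved above (on $X$, or on the cover $Y$). By the equality clauses of Lemma~\ref{l:curve} and Theorem~\ref{thm:AZadjunctionineq} — and, in the covered case, the last assertion of Proposition~\ref{prop:deltafinitecover} — either $\delta_p(\cO_X(1))=n+1$, which forces $a_r=d$ and so $X$ a linear cone, a contradiction; or $\delta_p(\cO_X(1))$ is computed by a prime divisor $E$ on $X$, in which case $A_X(E)=1$ and, since $\dim X\ge3$ forces $\Cl(X)=\bZ\cdot\cO_X(1)$ by the weighted Lefschetz theorem, $E\sim\cO_X(k)$ for an integer $k\ge1$; as the pseudoeffective cone of $X$ is a single ray one computes $S(\cO_X(1);E)=\tfrac1{k(n+1)}$, so $k(n+1)=\frac{A_X(E)}{S(\cO_X(1);E)}=\frac{(n+1)a_r}{d}$ forces $k=a_r/d<1$, absurd. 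Hence $\delta(-K_X)>1$ and $X$ is K-stable.

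The main obstacle is the finite-cover step: one must choose $\pi$ so that $Y$ is again a well-formed weighted hypersurface whose non-quasi-smooth locus is absorbed into $Bs|\cO_Y(1)|$ (this is where the preliminary automorphism of $\bP$ and the Bertini analysis of $\NQS(X\cap(z_s=0))$ enter), so that $\pi$ is unramified at the chosen preimage of $p$, and so that the degree and weights of $Y$ behave as claimed; and then one must verify that the equality analysis of Lemma~\ref{l:curve} genuinely descends through $\pi$ in the borderline case $\frac{(n+1)a_r}{d}=\iota_X$.
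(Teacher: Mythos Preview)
Your proposal is correct and follows essentially the same route as the paper: normalise so that $P_r\notin X$, handle points outside $Bs|\cO_X(1)|$ (when two unit-weight coordinates are available) directly via Lemma~\ref{l:cuttingcurve} and Lemma~\ref{l:curve}, and treat the remaining points by one or two cyclic covers unwinding a weight-$>1$ coordinate that is nonzero at $p$, then descend the bound through Proposition~\ref{prop:deltafinitecover}; the K-stability endgame via a prime divisor on $X$ and $\Cl(X)=\bZ\cdot\cO_X(1)$ is also the paper's argument.

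One small point to fix: your parenthetical justification for the existence of $s$ (``possible since $p\ne P_r$, and no unit-weight coordinate of $p$ is nonzero when $p\in Bs|\cO_X(1)|$'') does not cover the case $c_1=1$ with $p\notin Bs|\cO_X(1)|$, where $p$ may well have $p_s=0$ for every $s\ne r$ with $a_s>1$; the paper remedies this with a preliminary automorphism (its Claim~\ref{c:p_k}) moving $p$ so that some such coordinate is nonzero. Also, ``the first paragraph applied to $(Y,q)$'' is slightly loose, since after a cover $Y$ is no longer quasi-smooth --- one invokes Lemma~\ref{l:cuttingcurve} directly with the finite set $M$ furnished by $\NQS(Y)$ (and checks condition~(c) there when $r=n+1$), exactly as you anticipate in your closing paragraph.
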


\begin{proof}
By Lemma \ref{lem:P_r} we can assume that $P_r \notin X$.
Let $p=[p_0,\ldots, p_{n+1}]\in X$ be a point. %and let's denote by $H$ the divisorial class of $\cO_X(1)$. 
We shall show that $\delta_p(\cO_X(1)) \ge \frac{(n+1)a_{r}}{d}$ in the following. 

Assume first that $a_n=1$, then $P_r=P_{n+1}$ and $X$ is smooth. By Lemma \ref{l:cuttingcurve}, we can take hyperplanes $H_j \in |\cO_X(1)|$ with $j = 2,\ldots,n$  so that $C= \cap_{j =2}^n H_j$ is a smooth curve passing through $p$ (in this case this is an easy observation). 
By Lemma \ref{l:curve} (or \cite[Lemma 4.4]{AZ22}), we get
$$
\delta_p(\cO_X(1)) \ge \frac{n+1}{\cO_X(1)^n} = \frac{(n+1)a_{n+1}}{d}
$$
since $\cO_X(1)^n=d/a_{n+1}$. If an equality holds, then $\delta_p(\cO_X(1))$ is computed by a divisor $E$ on $X$ by Lemma \ref{l:curve}. 
Assume now that $X$ is Fano with $\frac{(n+1)a_{n+1}}{d} \ge \iota_X$. If the inequality is strict, then we get $\delta_p(-K_X) >1$ directly.  
If  $\delta_p(\cO_X(1))= \frac{(n+1)a_{n+1}}{d} = \iota_X$ and $n \ge 3$, then we look at a prime divisor $E$ on $X$ which computes $\delta_p(\cO_X(1))$.  
Then we have $\beta_X(E):= A_{X}(E) - S(-K_X; E) =0$. 
Let $\tau E \sim -K_X$ (such $\tau$ exists, since $n \ge 3$ implies $\rho(X)=1$ (cf. \cite[Proposition 2.3]{PST17})). 
Note that since $a_{n+1}<d$, we have $\iota_X <n+1$ and so $\tau <n+1$.
Then we can compute $A_X(E)=1$ and $S(-K_X; E) = \frac{\tau}{n+1}$, thus we have $\beta_X(E) = 1- \frac{\tau}{n+1}$. 
Since $\tau < n+1$, we see that $\beta_X(E)>0$ and this is a contradiction. 
Hence $X$ is K-stable.

So we can assume $a_n >1$. The plan is to apply Lemma \ref{l:cuttingcurve}, but first we need to take one or two covers (depending whether $a_0=1$ or not) to meet the assumptions $a_0=a_1=1$ and $p \notin Bs|\cO_X(1)|$ of the Lemma \ref{l:cuttingcurve}.

\begin{claim}\label{c:p_k}
We can assume $p_k \ne 0$ for some $k \ne r$ such that $a_k >1$. 
%(\xr{do we assume that $p=[1,0,\ldots,0,1,0,\ldots,0]$?} \xb{No, because $p$ could be in the base locus of $|\cO_X(1)|$})
\end{claim}
\begin{proof}[Proof of Claim \ref{c:p_k}]
If $p=[p_0,\ldots,p_{n+1}] \in Bs|\cO_X(1)|$, then there must be $p_k \ne 0$ with $k \ne r$ such that $a_k >1$, otherwise $p=P_r$, which is impossible since we are assuming that $P_r \notin X$. So we assume $p \notin Bs|\cO_X(1)|$. Up to reordering the coordinates with weights $1$ we can assume $p_0 \ne 0$. Applying an automorphim of the form $z_0 \mapsto z_0$ and $z_i \mapsto z_i - \lambda_i z_0^{a_i}$ with $\lambda_i \in \bC$ if $i>0$, we can assume that $p=[1,0,\ldots,0]$. Note that $P_r \notin X$ also after the automorphism.  
At this point we can send $p$ to $[1,0,\ldots,0,1,0,\ldots,0]$ where the second $1$ is at the $k$-th position for $k \ne r$ and $a_k >1$ (this is possible since $a_n >1$). Again $P_r \notin X$. This finishes the proof of the claim.	
\end{proof}

Set $H_k = \{ z_k=0 \} \subset X$ and note now that $H_k$ has only isolated singularities at the finite set
\[
Z_k:= \left\{ z_k= \frac{\partial F}{\partial z_0} = \cdots = \frac{\partial F}{\partial z_{k-1}}  = \frac{\partial F}{\partial z_{k+1}} = \cdots =  \frac{\partial F}{\partial z_{n+1}} =0 \right\} \cap X, 
\]  
where $F$ is the defining polynomial of $X$. This is because otherwise $Z_k \cap \{ \frac{\partial F}{\partial z_{k}} =0\}$ would be non-empty, contradicting the quasi-smoothness of $X$.
Let $\delta_k \subset |\cO_X(a_k)|$ be the sublinear system given by $z_0^{a_k}, \ldots , z_{c_1-1}^{a_k}, z_k$. 
Since we have 
\[
Bs (\delta_k) \subset Bs |\cO_X(1)| \cap \{ z_k =0 \}=:L_k,
\]
 we can take a general homogeneous polynomial $G_k \in \bC[z_0, \ldots , z_{c_1-1}]$ of degree $a_k$ 
so that $H'_k= \{ z_k + G_k =0 \} \in |\cO_X(a_k)|$  is quasi-smooth outside $L_k$. If $a_0 >1$, we let $Bs |\cO_X(1)|:=X$ 
%we mean all $X$ and there would be no $G_k$ in this case. 
and let $H'_k := H_k$ without using $G_k$. 
We see that $p \notin H'_k$ since $p_k \neq 0$ in Claim \ref{c:p_k} and $G_k$ is general when $a_0 =1$.

Consider the morphism 
\[
\pi \colon \bP':=\bP(a_0,\ldots,a_{k-1},1,a_{k+1},\ldots,a_{n+1}) \to \bP(a_0,\ldots,a_k, \ldots , a_{n+1})=:\bP
\] given by $w_i \mapsto w_i=z_i$ for $i \ne k$ and $w_k \mapsto w_k^{a_k} - G_k=z_k$. 
%Let $\psi \in \Aut \bP(a_0, \ldots , a_n)$ be an automorphism given by $z_i \mapsto z_i$ for $i \neq k$ and $z_k \mapsto z_k-G_k$. 

Let $W = \pi^{-1}(X)$ and note that $\cO_W(1) = \pi^*\cO_{X}(1)$. The variety $W$ is a well-formed hypersurface of degree $d$ defined by the polynomial 
\[
G(w_0,\ldots,w_{n+1})=F(w_0, \ldots,w_{k-1}, w_k^{a_k}-G_k, w_{k+1}, \ldots, w_{n+1}) \in \bC[w_0, \ldots, w_{n+1}]. 
\] 

Since $\pi$ is a finite cyclic cover branched along $H'_k= \{z_k+G_k =0\}$ whose singular points are contained in $L_k$, 
we see that $W$ has only isolated hypersurface singularities (thus normal) 
and $\NQS(W) \subset Bs|\cO_W(1)|$ since $\pi^{-1}(L_k) = Bs |\cO_W(1)|$.
% (\xb{Is RHS rather $\pi^{-1}(Bs |\cO_X(1)|)$? $Bs|\cO_W(1)|$ is smaller than that. }). 
Let $q=[q_0, \ldots, q_{n+1}] \in W$ be a preimage of $p$, so that $q_k \ne 0$ by $p \notin H'_k$. 

Since we have 
\[
K_W = \pi^* \left( K_X + \frac{a_k-1}{a_k} H'_k \right)  
\]
and the pair $\dps{ \left( X, \Delta_k:=\frac{a_k-1}{a_k} H'_k \right)}$ is klt at $p$ by $p \notin H'_k$, we see that $W$ is klt at $q$. 
By $p \notin H'_k$ and the definition of $\delta_p$, we have  
\[
\delta_p(X, \Delta_k; \cO_X(1)) = \delta_p(X, 0; \cO_X(1)) = \delta_p(\cO_X(1)). 
\]
By this and Proposition \ref{prop:deltafinitecover}, we have 
\[
\delta_q(\cO_W(1)) = \delta_q(W,0; \cO_W(1)) \le \delta_p(\cO_X(1)).
\]  

Note that $q \notin Bs|\cO_W(1)|$ since the weight of $w_k$ is 1 and we still have $P'_r \notin W$, 
where $P'_r \in \bP'$ is also the $r$-th coordinate point. 

We now distinguish two cases.

\noindent\textbf{Case 1.} Assume $a_0=1$.  Up to reordering the coordinates we can move $w_k$ to $w_1$, so we have $W \subset \bP(1,1,b_2,\ldots,b_{n+1})$,  where 
$b_j = \begin{cases} 
a_{j-1} & (j \le k) \\
a_j & (j > k)
\end{cases}$. 

Note that if $k \ge r$, we should change $r$ to $r+1$, but for simplicity we stick to use the same letter $r$ and assume $P'_r \notin W$ 
%\xb{Yes, do you think this is too confusing? Otherwise we need to consider two cases.}, 
since this creates no problem. 
(If $k=n+1$ and $r=n$, then $r$ becomes $n+1$ after reordering. This is not a problem since we have $M = \emptyset$ in Case 1 to apply Lemma 4.3.)
%  \xb{Right, maybe we should say a sentence about this.}

By Lemma \ref{l:cuttingcurve} we can take hyperplanes $H_j \in |\cO_W(b_j)|$ for $j \in J:=\{2,\ldots,n+1\}\setminus \{r\}$ so that $C= \cap_{j \in J }H_j$ is a smooth curve passing through $q$.
% where $a'_j = \begin{cases} 
%a_j & (j\neq k) \\
%1 & (j = k)
%\end{cases}$. 
By Lemma \ref{l:curve}  we get  
\[
\delta_q( \cO_W(1)) \ge \frac{n+1}{(\prod_{j \in J}b_j) \cdot \cO_W(1)^n} = \frac{n+1}{(\prod_{j \in \{1,\ldots , n+1 \} \setminus \{k,r \} } a_j) \cO_W(1)^n} = \frac{(n+1)a_{r}}{d}.
\]
since $\cO_W(1)^n = a_k \cO_X(1)^n= \dps{ a_k \cdot \frac{d}{\prod_{i=1}^{n+1} a_i}}$ (and $a_0=1$). 

Assume now that $X$ is Fano of index $\iota_X \le \frac{(n+1)a_r}{d}$. 
Since we have $\delta_p(\cO_X(1)) \ge \delta_q(\cO_W(1)) \ge \frac{(n+1)a_r}{d}$, we obtain 
\[
\delta_p(-K_X) = \delta_p(\cO_X(\iota_X)) = \frac{1}{\iota_X} \delta_p(\cO_X(1)) \ge \frac{1}{\iota_X} \cdot \frac{(n+1)a_r}{d} \ge 1. 
\] 
If 
$
\delta_p(-K_X)%= \frac{(n+1)a_r}{d}
= 1,
$
then we have $\iota_X = \frac{(n+1)a_r}{d}$. 
By \cite[Theorem 1.2]{LXZ22}, $\delta_p(-K_X)$ is computed by a divisorial valuation over $X$. 
Moreover, we must have the equalities
\[
\delta_p(\cO_X(1)) =\delta_q(\cO_W(1)) = \frac{(n+1)a_r}{d}.
\]  
Hence by Lemma \ref{l:curve} every valuation on $W$ computing $\delta_q(\cO_W(1))$ is induced by a prime divisor on $W$. By Proposition \ref{prop:deltafinitecover}, $\delta_p(\cO_X(1))$ is computed by a prime divisor $E$ on $X$ and so $\beta_X(E)=0$, which gives a contradiction as in the case $a_n=1$.   

\smallskip 

\noindent\textbf{Case 2.} Assume  $a_0>1$. After taking the (first) cover $\pi$ branched along $H_k$ (and reordering the coordinates), we get a  well-formed hypersurface $W \subset \bP(1,b_1,b_2,\ldots,b_{n+1})$ of degree $d$ such that 
\[
\NQS(W) \subset Bs |\cO_W(1)|=\{w_0=0\} \subset W
\] is a finite set, $q=[q_0,\ldots,q_{n+1}] \notin Bs |\cO_W(1)|$ (that is, $q_0 \neq 0$) and $P'_r \notin W \subset \bP'$ (this $r$ is a new $r$ after the reordering).

As in Claim \ref{c:p_k}, we can assume $q_\ell \ne 0$ for some $\ell \ne r$ such that $b_\ell >1$. 
Consider $H'_\ell \in |\cO_W(b_\ell)|$ general. Since 
$$
Bs |\cO_W(b_\ell)| \subset Bs |\cO_W(1)| \cap (w_l=0)  =(w_0 = w_l=0), 
$$
we conclude by Bertini that $H'_\ell$ is quasi-smooth outside 
$(w_0 = w_l =0) \cup \NQS(W)$ (note that $H'_{\ell}$ can be singular along a curve).

Hence we can take the cover $\nu \colon Y \to W$ branched along $H'_\ell$ and obtain (up to reordering) a variety isomorphic to a well-formed hypersurface 
\[
Y \subset \bP(1,1,c_2,\ldots,c_{n+1})
\]
such that $\NQS(Y) \subset \nu^{-1} \left( (w_0=w_\ell=0) \cup \NQS(W) \right) $, where $Bs|\cO_Y(1)|$ is the inverse image of $(w_0 = w_\ell =0)$. 
Let $y_0,\ldots, y_{n+1}$ be the coordinates of $\bP(1,1,c_2,\ldots,c_{n+1})$.
We now check the normality of $Y$. Since $\dim \NQS(Y) \le 1$, the only problem is if $n=2$. In this case, we have $\NQS(Y) \ne Bs |\cO_Y(1)|$, because the $r$-th coordinate point $P''_r \notin Y$ (again a new $r$). Hence the codimension of $\NQS(Y)$ is at least two and $Y$ is normal. 
For a preimage $q' \in \nu^{-1}(q)$, we see that $Y$ is klt at $q'$ by $q \notin H'_{\ell}$ as before. 
We also have the inequality $\delta_{q'}(Y, \cO_Y(1)) \le \delta_q(W, \cO_W(1))$ as before. 

Note also that $M= \nu^{-1} (\NQS(W))$ is a finite set which is contained in $(y_0=0)$ and that 
$q' \notin (y_0 =0)$ by the construction of $\nu$. 
By Lemma \ref{l:cuttingcurve}, we can take hyperplanes $H_j \in |\cO_Y(c_j) \otimes \cI_{q'}|$ with the properties in Lemma \ref{l:cuttingcurve}. 
%so in particular it is disjoint from $X \cap V_n$ \xr{Is this supposed to be $M$ as in Lemma 4.1?} \xb{Yes, we need to avoid $\nu^{-1} (\NQS(W))$ with the cut, because it might not be contained in $B_1$} in the notation of Lemma \ref{l:cuttingcurve}. 
We can now proceed to show $\delta_p(\cO_X(1)) \ge \delta_{q'}(\cO_Y(1)) \ge \frac{(n+1)a_r}{d}$ exactly as in the proof of \textbf{Case 1}, noting that $\cO_Y(1)^n=a_k a_\ell \cO_X(1)^n$ and $\cO_X(1)^n=d/\prod_{i=0}^{n+1} a_i$. 

The K-stability of $X$ when $X$ is Fano  of index $\iota_X \le \frac{(n+1) a_r}{d}$ follows from the exact same argument as in \textbf{Case 1}.
\end{proof}

\begin{proof}[Proof of Corollary \ref{c:quasi-smooth}]
	By Theorem \ref{thm:main}, we have 
	$$
	\delta(\cO_X(1)) \ge \frac{(n+1)a_{n+1}}{d}.
	$$
	The result follows now from Lemma \ref{l:bound1} and, to treat the equality case, we again use Theorem \ref{thm:main}.
\end{proof}

\begin{proof}[Proof of Corollary \ref{c:smooth}]
By \cite[Corollary 5.11]{PST17} we know that $a_0=a_1=1$. In addition, since $X$ is smooth, we also have that $P_{n+1}=[0,\ldots,0,1] \notin X$. 
By Theorem \ref{thm:main}, we have 
$$
\delta(\cO_X(1)) \ge \frac{(n+1)a_{n+1}}{d}.
$$
The result follows now from Proposition \ref{p:bound}, where the cases $\gamma=1$ are taken care by Theorem \ref{thm:main}. %with the exception of four cases that we treat separately.  

%The index is $\iota_X=2$ and  $X_6 \subset \bP(1,1,1,2,3)$ or $X_{d} \subset \bP(1,\ldots, 1, 2)$. The K-stability of $X_6 \subset \bP(1,1,1,2,3)$ is \cite[Corollary 4.5.5]{AZ22}. 
%The K-stability of $X_{d} \subset \bP(1,\ldots, 1, 2)$ follows from Lemma \ref{l:index}.

%The index is $\iota_X=3$ and $X_d \subset \bP(1,\ldots,1,3)$ or $X_{12} \subset \bP(1,\ldots,1,3,4)$. The K-stability of $X_d \subset \bP(1,\ldots,1,3)$ follows from Lemma \ref{l:index}.
%The K-stability of  $X_{12} \subset \bP(1,\ldots,1,3,4)$ follows by Lemma \ref{l:a_n}.
\end{proof}

\section*{Acknowledgement} 
We thank Kento Fujita, Yuchen Liu and Takuzo Okada for valuable comments. 
We thank Victor Przyjalkowski for valuable communications on Subsection \ref{ss:WCI}. 
%We are grateful to the referee for helpful comments. 

Part of this project has been realized when the first author visited Universit\`a degli Studi di Milano. He would like to thank the university and Kobe university for their support. 
The first author was partially supported by JSPS KAKENHI Grant Numbers JP17H06127, JP19K14509, JP23K03032 and Long-term Overseas
Teaching Fellowship Program for Young Researchers from Kobe university. 
The second author was partially supported by PRIN2020 research grant ”2020KKWT53” and  is member of the GNSAGA group of INdAM.

\bibliographystyle{amsalpha}
\bibliography{Library}

\end{document}